\documentclass[12pt,leqno]{amsart}
\usepackage{a4,amssymb,amsthm,amscd,amsmath, graphicx, verbatim,url,enumerate, mathdots, mathrsfs, cleveref, mathabx}
\usepackage[pdftex,usenames,dvipsnames]{color}
\usepackage{tikz}
\usepackage{tikz-cd}
\usepackage{xypic}

\usepackage[utf8]{inputenc}
\usepackage[T1]{fontenc} 

\title{Strong linkage for function fields of surfaces}

\author{Karim Johannes Becher}
\author{Parul Gupta}
\address{Universiteit Antwerpen, Departement Wiskunde, Middelheim\-laan~1, 2020 Antwerpen, Belgium.}
\email{karimjohannes.becher@uantwerpen.be}
\email{parul.gupta@uantwerpen.be}

\address{Technische Universit\"at Dresden, Institut f\"ur Algebra, 01062 Dresden, Germany.}
\address{IISER Pune, Dr.~Homi Bhabha Road, Pashan, Pune 411 008, India}
\email{parul.gupta@iiserpune.ac.in}
\thanks{This work was supported by the \emph{Fonds Wetenschappelijk Onderzoek -- Vlaanderen} in the \emph{FWO Odysseus Programme} (project \emph{Explicit Methods in Quadratic Form Theory}) and by the \emph{Bijzonder Onderzoeksfonds, University of Antwerp} (project BOF-DOCPRO-4, 2865)}

\date{12.02.2021}


\newcommand{\qq}{\mathbb Q}

\newcommand{\cc}{\mathbb C}

\newcommand{\nat}{\mathbb{N}} 
\newcommand{\zz}{\mathbb Z}

\newcommand{\mc}[1]{\mathcal{#1}}
\newcommand{\mf}[1]{\mathfrak{#1}}

\newcommand{\mg}[1]{{#1}^{\times}}

\newcommand{\brm}[1]{{\sf Br}_m(#1)}
\newcommand{\brmn}[1]{{\sf Br}_m^{\mathsf{nr}}(#1)}
\newcommand{\br}[1]{{\sf Br}(#1)}
\newcommand{\ovl}{\overline}

\newcommand{\mfp}{\mathfrak{p}}
\newcommand{\mfm}{\mathfrak{m}}

\newcommand{\matr}[1]{\mathbb{M}_{#1}}
\newcommand{\s}{\sigma}

\renewcommand{\dim}{\mathsf{dim}}

\renewcommand{\ker}{\mathsf{ker}}
\newcommand{\gal}{\mathsf{Gal}}

\newcommand{\car}{\mathsf{char}}

\newcommand{\supp}{\mathsf{Supp}}

\DeclareMathOperator{\spec}{\mathsf{Spec}}
\newcommand{\N}{\mathsf{N}}
\newcommand{\Hom}{\mathsf{Hom}}

\newcommand{\Div}{\mathsf{Div}}

\newcommand{\mult}{\mathsf{mult}}

\renewcommand{\bmod}{\,\,\mathsf{mod}\,\,}
\renewcommand{\setminus}{\smallsetminus}
\newcommand{\K}{\mathsf{K}}


\swapnumbers
\numberwithin{equation}{section}
\newtheorem{thm}[equation]{Theorem}
\newtheorem{prop}[equation]{Proposition}
\newtheorem{cor}[equation]{Corollary}

\newtheorem{lem}[equation]{Lemma}
\newtheorem{qu}[equation]{Question}
\theoremstyle{definition}

\newtheorem{ex}[equation]{Example}
\newtheorem{exs}[equation]{Examples}
\newtheorem{rem}[equation]{Remark}

\renewenvironment{proof}{\par\noindent {\em Proof:}}{\hfill$\Box$\medskip}
\theoremstyle{plain}

\begin{document}
\begin{abstract}
Over a global field any finite number of central simple algebras of exponent dividing $m$ is split by a common cyclic field extension of degree $m$. We show that the same property holds for function fields of $2$-dimensional excellent schemes over a henselian local domain of dimension one or two with algebraically closed residue field. 

\medskip\noindent
{\sc{Classification (MSC 2010):} 13J15, 16K20, 16S35, 19C30, 19D45}

\medskip\noindent
{\sc{Keywords:}} cyclic algebra, splitting field, period-index problem, Brauer group, Milnor $K$-theory, symbol, common slot, linkage,  henselian ring, arithmetic surface, pseudo-algebraically closed field, quasi-finite field
\end{abstract}

\maketitle

\section{Introduction}

Let $F$ be a field and $m$ a positive integer.
We say that $F$ is \emph{strongly linked in degree $m$} if any finite number of central simple $F$-algebras of exponent dividing $m$  is split by a cyclic field extension of degree dividing $m$. 
It follows from class field theory that global fields have this property.
In \cite{Lenstra}, H.~Lenstra showed a similar statement for elements of the second $K$-group of a global field.

In this article we show that $F$ satisfies strong linkage in degree $m$ in the following three cases:
\begin{enumerate}
\item $F$ is the fraction field of a $2$-dimensional excellent henselian local domain with algebraically closed residue field  of characteristic not dividing~$m$ (e.g.~a finite extension of $\cc(\!(X,Y)\!)$).
\item $F$ is the function field of a curve over the fraction field of an excellent henselian discrete valuation ring with algebraically closed residue field of characteristic not dividing $m$ (e.g.~a finite extension of $\cc(\!(X)\!)(Y)$).
\item $F$ is the function field of a curve over a perfect pseudo-algebraically closed field of characteristic not dividing $m$.
\end{enumerate}

Let $\brm F$ denote the $m$-torsion part of the Brauer group of $F$.
Just as for global fields, in the cases $(1)-(3)$ the period-index problem has a positive answer, that is, any element of $\brm F$ is given by a central simple $F$-algebra of degree~$m$. 
This was shown in \cite{FS89} and  \cite{Ford96} for the cases $(1)$ and $(2)$, respectively, and for the case $(3)$ this follows from \cite[Theorem~3.4]{Efrat}.
The method in all these cases is to study the ramification of elements of $\brm{F}$ with respect to discrete valuations on $F$ and to construct a cyclic field extension that splits the ramification. 
This method relies on the fact that the unramified part of the Brauer group is trivial in these cases. 
We apply the same technique to show strong linkage in degree $m$. 

If $F$ is strongly linked in degree $m$ then it follows that any central $F$-division algebra of exponent $m$ is cyclic of index $m$.
In particular, strong linkage in all degrees implies a positive answer to the period-index problem. 
The field of iterated Laurent series $\cc(\!(X)\!)(\!(Y)\!)(\!(Z)\!)$ gives an example of a field where strong linkage fails in all degrees $m\geq 2$ while the period-index problem has a positive answer.
This strange example is disposed off by restricting to fields of cohomological dimension~$2$, which still covers global fields as well as the fields in the cases~$(1)-(3)$.

It is  more difficult to show that there exist also fields of cohomological dimension~$2$ where the period-index problem has a positive answer but where strong linkage fails in some degree $m$.
A.~Chapman and J.-P.~Tignol showed very recently that $F=\cc(X,Y)$, the rational function field in two variables over the complex numbers, does not satisfy strong linkage for $m=2$.
More specifically, it is shown in \cite{ChTi19} that no quadratic field extension of $F$ splits all the four $F$-quaternion algebras $$(X,Y)_F,\, (X,Y+1)_F,\,(X+1,Y)_F,\,(X,XY+1)_F\,.$$
In view of this result it seems reasonable to expect that strong linkage fails in any degree $m\geq 2$ over this field, and more generally over the function field of any algebraic surface over $\cc$.

The result of \cite{ChTi19} is interesting in view of the fact that the field $\cc(X,Y)$ has the $\mc{C}_2$-property (in terms of Tsen-Lang theory, see \cite[Chapter~5]{Pfister}) and that the period-index problem has a positive answer for this field, by \cite{jong}. 

The structure of this article is as follows.
In \Cref{S:Milnor}, we will recall the setup of Milnor $K$-theory and formulate the problem of strong linkage in these terms. 
This seems the most natural context for this problem and for some of the tools that we need, and it further gives an opening to studying the analogous problem for higher $K$-groups. 
In this article, however, we focus on the second $K$-group of a field and its quotient modulo $m$, which in characteristic not dividing $m$ is naturally isomorphic to $\brm F$, by the Merkurjev-Suslin Theorem. 
We recall this relation in \Cref{S:cyclic}, where we further collect the relevant results on the vanishing of the unramified part of the Brauer group of certain function fields.
In \Cref{S:Brauer-unramified}, a general strategy is provided for finding a common slot for a finite set of symbols in $K$-theory modulo $m$ (or of the corresponding cyclic algebras).
In \Cref{S:PAC}, we treat case $(3)$.
For treating the cases $(1)$ and $(2)$ we need some results from algebraic geometry. 
In these cases we need to achieve that the ramification of the set of symbols is contained in a normal crossing divisor on some regular model of the field $F$.
Sections \ref{S:ram-2dim} and \ref{S:ram-surfaces} revisit some results related to divisors on regular surfaces over henselian local domains.
In \Cref{S:main-result} we show strong linkage in the cases $(1)$ and $(2)$.
In this situation one can obtain a candidate for this slot as a function passing along all components of the support of the ramification divisor with multiplicity $1$.
Finally, in \Cref{S:disc-qf} we make the observation that a discrete valuation on a quasi-finite field is henselian and the residue field is algebraically closed of characteristic zero. 
As a consequence we obtain that algebraic function fields over a discretely valued quasi-finite field fall under case $(2)$, and therefore they satisfy strong linkage in all degrees. 

The techniques that we use to obtain our results are not new. They have been developed and applied to show structure properties for central simple algebras and quadratic forms over function fields of certain surfaces, by Ford and Saltman (\cite{FS89}, \cite{Ford96}, \cite{Saltman}, \cite{Sal07}) and further by Colliot-Th\'el\`ene, Ojanguren, Parimala  and Suresh (\cite{PS98}, \cite{CTOP}, \cite{PS10}). Here, we are focussing on function fields of cohomological dimension $2$ and hence do not cover the case of function fields of $p$-adic curves, which were in the focus of several of the articles just mentioned.
We shift the focus from the study of a single object like a central simple algebra, an element of a Milnor $K$-group or of a Galois cohomology group to the problem of having a simultaneous representation of an arbitrary finite number of such objects. It is probably clear to experts that the methods carry over to settle that problem, leading to most of our results. However, to achieve a good presentation, we include an exposition of these tools, and we further adapt them to the set-up of Milnor $K$-theory as far as possible. 
For the crucial fact that splitting ramification splits the elements in question (which also holds in some other situations than those considered here) we rely entirely on the existing vanishing results for the unramified part of certain cohomology groups.
On the other hand, the computations on ramification become straightforward in the setup of generators and relations, and we underline this by including these computations.
\smallskip

This article is based on Parul Gupta's PhD-thesis prepared under the supervision of Karim Johannes Becher (\emph{Universiteit Antwerpen}) and Arno Fehm (\emph{Technische Universit\"at Dresden}) in the framework of a joint PhD at \emph{Universiteit Antwerpen} and \emph{Universit\"at Konstanz}.

\section{$K$-groups and ramification}\label{ramification} 

\label{S:Milnor}
We recall some basic terminology and facts from Milnor $K$-theory.
Our main references are Milnor's seminal article \cite{Mil70} and \cite[Chapter~7]{GS}.
\medskip

Let $F$ be a field. Let $\mg{F}$ denote the multiplicative group of $F$. 
For $n,m \in \nat $, let $\K_n^{(m)}F$ be the \emph{$n$th Milnor $K$-group of $F$ modulo $m$}, that is, the additive abelian group defined by generators and relations as follows:
$\K_n^{(m)}F$ is generated by so-called \emph{symbols} $\{a_1,\ldots, a_n\}$ with parameters $a_1, \ldots, a_n\in \mg F$, and the defining relations are 
that the map $(\mg F)^n \rightarrow \K_n^{(m)}F,(a_1, \ldots, a_n) \mapsto \{a_1, \ldots, a_n\}$ is multilinear,
that $\{a_1, \ldots, a_n\} =0 $ holds for any $a_1,\dots,a_n\in\mg F$ with $a_i+a_{i+1} =1$ for some index $i<n$, 
and that $m\cdot \K_n^{(m)}F=0$.
Note that $\K_n^{(0)}F$ is the full Milnor $K$-group, usually denoted by $\K_n F$, and that $\K_n^{(m)}F=\K_nF/m\K_nF$.  
For a field extension $F'/F$ there is a natural homomorphism $\K_n^{(m)}F\to \K_n^{(m)}F'$ which maps the symbol $\{a_1,\dots,a_n\}$ 
given by parameters $a_1,\dots,a_n\in\mg{F}$ to the symbol with the same notation in $\K_n^{(m)}F'$; this map is generally not injective, because symbols from $\K_n^{(m)}F$ are subject to additional relations in $\K_n^{(m)}F'$.
Given a field extension $F'/F$ and $\alpha\in \K_n^{(m)}F$, we denote by $\alpha_{F'}$ the image of $\alpha$ under the natural homomorphism $\K_n^{(m)}F\to \K_n^{(m)}F'$.

We now fix a positive integer $m$.
The direct sum of $\zz$-modules $\bigoplus_{n\in\nat}\K_n^{(m)}F$ 
has a natural multiplication which makes it into a graded $\zz$-algebra;
the multiplication is determined by the rule that, for $n,n'\in\nat$ and $a_1,\dots,a_n,b_1,\dots,b_{n'}\in\mg F$, we have
$$\{a_1,\dots,a_n\}\cdot \{b_1,\dots,b_{n'}\}=\{a_1,\dots,a_n,b_1,\dots,b_{n'}\}\,.$$

Let $\alpha \in \K_n^{(m)}F$. 
A symbol $\s\in \K_r^{(m)}F$ is called a \emph{factor of $\alpha$} if $\alpha=\s\cdot \gamma$ for some element $\gamma\in \K_{n-r}^{(m)}F$.
For the case where $r=1$,
we also say that $a\in\mg{F}$ is a \emph{slot of $\alpha$} if the symbol $\{a\}$ is a factor of $\alpha$.

Given a symbol $\s\in \K_r^{(m)}F$, 
we denote by $\s\cdot \K_{n-r}^{(m)}F$ the subgroup of $\K_n^{(m)}F$ consisting of the elements having $\s$ as a factor, that is, the 
image of the map $\K_{n-r}^{(m)}F\to \K_n^{(m)}F$ given by multiplication from the left by $\s$. 
We say that $\K_n^{(m)}F$ is \emph{strongly linked} if for every finite subset $\mc S$ of $\K_n^{(m)}F$ there exists a
symbol $\s\in \K_{n-1}^{(m)}F$ such that $\mc{S}\subseteq\s \cdot \K_1^{(m)}F$.
Trivially,  if  $\K_n^{(m)}F=0$ then $\K_n^{(m)}F$ is strongly linked. 
It is also obvious that, if $\K_n^{(m)}F$ is strongly linked, then every element of $\K_n^{(m)}F$ is a symbol.

In this article we focus on the case $n=2$, hence on the problem of strong linkage for $\K_2^{(m)}F$.
When $F$ is a finite field, then $\K_2^{(0)}F=0$ (see \cite[Example 7.1.3]{GS}).
In \cite{Lenstra}, H.~Lenstra showed that $\K_2^{(0)}F$ is strongly linked in the case where $F$ is a global field. 
Apart from these fields and their algebraic extensions, 
 we do not expect strong linkage to hold  for $\K_2^{(0)}F$ in any other cases, and we therefore consider the problem of strong linkage of $\K_2^{(m)}F$ in the sequel for $m>0$.

A crucial tool in the context of Milnor $K$-theory is given by residue maps with respect to discrete valuations.
Our main reference for valuation theory is \cite{EP05}.
By a \emph{$\zz$-valuation} we mean a valuation with value group $\zz$. 

Let $v$ be a $\zz$-valua\-tion on $F$. We denote by $\mathcal O_v, \mf m_v$ and $ 
\kappa_v$ the corresponding valuation ring, its maximal ideal and its residue field, respectively.
Given $a\in \mc O_v$ we write $\bar a$ for the residue $a+\mfm_v$ in $\kappa_v$.
We recall the definition of the ramification homomorphism with respect to $v$, which is denoted by $\partial_v$.

\begin{prop}\label{Milnorresiduehom} 
For each $n\geq 1$ there exists a unique homomorphism
$$\partial_v: \K_n^{(m)}F\rightarrow \K_{n-1}^{(m)}\kappa_v$$ such that, for all $x \in \mg F$ and $u_2, \ldots u_n \in  \mg{\mc O}_v$, one has
$$ \partial_v(\{x, u_2, \ldots u_n\}) = v(x)\cdot \{\bar u_2, \ldots, \bar u_n \} \text{~in~}\K_{n-1}^{(m)}\kappa_v\,.$$
\end{prop}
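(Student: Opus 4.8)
The plan is to obtain uniqueness from the fact that the symbols appearing in the formula generate $\K_n^{(m)}F$, and to obtain existence by mapping $\K_*^{(m)}F$, through its presentation by generators and relations, into a graded ring manufactured from $\K_*^{(m)}\kappa_v$. For uniqueness, fix a uniformizer $\pi$ of $v$. Writing each parameter of a symbol $\{a_1,\dots,a_n\}$ as a unit times a power of $\pi$ and expanding by multilinearity presents it as a $\zz$-linear combination of symbols whose parameters lie in $\mg{\mc O}_v\cup\{\pi\}$; the relations $\{a,b\}=-\{b,a\}$ and $\{\pi,\pi\}=-\{\pi,-1\}$, which are consequences of the defining relations (via $\{a,-a\}=0$), then allow one to reduce to symbols having at most one parameter equal to $\pi$, located in the first slot. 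Every such symbol is of the form $\{x,u_2,\dots,u_n\}$ with $x\in\mg F$ and $u_2,\dots,u_n\in\mg{\mc O}_v$, so the stated formula determines $\partial_v$ entirely; in particular the map produced below will not depend on the choice of $\pi$.

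For existence I would form the graded ring $A$ whose $n$-th homogeneous component is $\K_n^{(m)}\kappa_v\oplus\K_{n-1}^{(m)}\kappa_v$, write $\xi$ for the degree $1$ element $(0,1)$, and identify $\K_*^{(m)}\kappa_v$ with the first summand, so that $A=\K_*^{(m)}\kappa_v\oplus\xi\cdot\K_*^{(m)}\kappa_v$; the multiplication of $A$ extends that of $\K_*^{(m)}\kappa_v$ and is pinned down by the rules $\xi\cdot y=(-1)^{\deg y}\,y\cdot\xi$ for homogeneous $y\in\K_*^{(m)}\kappa_v$ and $\xi^2=-\xi\cdot\{-1\}$, and one checks directly that this defines an associative ring. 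Using $\mg F=\mg{\mc O}_v\times\pi^{\zz}$, the rule $u\pi^{j}\mapsto\{\bar u\}+j\,\xi$ defines a group homomorphism $\phi\colon\mg F\to A$ into degree $1$, and by the universal property of $\K_*^{(m)}F$ it extends to a homomorphism of graded rings $\K_*^{(m)}F\to A$ once the Steinberg relation $\phi(a)\cdot\phi(1-a)=0$ is checked for all $a\in F\setminus\{0,1\}$ (the remaining requirement, $m\cdot A=0$, being built in). Granting this, write the image of $\alpha\in\K_n^{(m)}F$ uniquely as $s(\alpha)+\xi\cdot\partial_v(\alpha)$ with $s(\alpha)\in\K_n^{(m)}\kappa_v$ and $\partial_v(\alpha)\in\K_{n-1}^{(m)}\kappa_v$; then $\partial_v$ is additive, and computing on a symbol $\{x,u_2,\dots,u_n\}$ with $u_i\in\mg{\mc O}_v$ gives $\phi(x)\cdot\{\bar u_2\}\cdots\{\bar u_n\}=\{\bar w,\bar u_2,\dots,\bar u_n\}+v(x)\,\xi\cdot\{\bar u_2,\dots,\bar u_n\}$ with $w=x\pi^{-v(x)}$, so that $\partial_v(\{x,u_2,\dots,u_n\})=v(x)\cdot\{\bar u_2,\dots,\bar u_n\}$, as desired.

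The step I expect to be the main obstacle is the Steinberg verification (alongside the routine but sign-sensitive confirmation of associativity of $A$). One argues by cases on $v(a)$. If $v(a)>0$, then $1-a\in\mg{\mc O}_v$ has residue $1$, so $\phi(1-a)=0$. If $v(a)=0$, then either $\bar a=1$ and $\phi(a)=0$, or $\bar a\notin\{0,1\}$, $\overline{1-a}=1-\bar a$, and $\phi(a)\phi(1-a)=\{\bar a,\,1-\bar a\}=0$ by the Steinberg relation in $\K_2^{(m)}\kappa_v$. If $v(a)<0$, write $a=u\pi^{v(a)}$ with $u\in\mg{\mc O}_v$; then $1-a=\pi^{v(a)}\,(-u)\,(1-a^{-1})$ with $1-a^{-1}\in 1+\mfm_v$, hence $\phi(1-a)=\{-1\}+\{\bar u\}+v(a)\,\xi$. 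Expanding $\phi(a)\cdot\phi(1-a)$, the part lying in $\K_*^{(m)}\kappa_v$ equals $\{\bar u,\,-\bar u\}=0$, the cross terms $v(a)\,\{\bar u\}\,\xi$ and $v(a)\,\xi\,\{\bar u\}$ cancel by the commutation rule, and the remaining term is $v(a)\bigl(1-v(a)\bigr)\,\xi\cdot\{-1\}$, which vanishes since $v(a)\bigl(1-v(a)\bigr)$ is even while $2\{-1\}=\{1\}=0$ in $\K_1^{(m)}\kappa_v$. This settles all cases. The case $m=0$ of the proposition is Milnor's construction in \cite{Mil70}, and the argument above is its reduction modulo $m$.
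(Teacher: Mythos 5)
Your proof is correct, but it is organized quite differently from the paper's, which is a two-line reduction: the paper cites \cite[Proposition~7.1.4]{GS} for the integral case $m=0$ (Milnor's construction of the residue map) and observes that the statement modulo $m$ follows, since the integral $\partial_v$ is a homomorphism and hence descends to the quotients $\K_n^{(m)}$, with uniqueness coming from the generation of $\K_n^{(m)}F$ by symbols of the stated shape. You instead rerun the Serre--Milnor construction directly with $\zz/m\zz$-coefficients: uniqueness by rewriting an arbitrary symbol, via $\{a,b\}=-\{b,a\}$ and $\{\pi,\pi\}=-\{\pi,-1\}$, as a combination of symbols with at most one entry equal to $\pi$, and existence via the graded ring $A=\K_*^{(m)}\kappa_v\oplus\xi\cdot\K_*^{(m)}\kappa_v$ together with the Steinberg verification. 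That verification is right, including the only delicate case $v(a)<0$, where the leftover coefficient $v(a)\bigl(1-v(a)\bigr)$ is even and $2\{-1\}=0$ in $\K_1^{(m)}\kappa_v$. Concerning the step you flag: associativity of $A$ does hold with your conventions, but it is not purely formal; for instance $(\xi\xi)\xi=\xi(\xi\xi)$ comes down to $2\{-1,-1\}=0$ in $\K_2^{(m)}\kappa_v$ (true, since $2\{-1,-1\}=\{-1,1\}=0$), and the general triple-product check uses graded-commutativity of $\K_*^{(m)}\kappa_v$ together with this identity, so it is of the same nature as the Steinberg check rather than a mere bookkeeping exercise. What your route buys is a self-contained proof carried out entirely at the mod-$m$ level, independent of the integral statement; what the paper's route buys is brevity, outsourcing exactly these sign-sensitive verifications to the cited reference and retaining only the trivial passage to $\K_n^{(m)}$.
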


\begin{proof}
See \cite[Proposition~7.1.4]{GS} for the case $m=0$; this implies the  statement for any $m\in\nat$.
\end{proof}

For $n=2$ the homomorphism $\partial_v :\K_2^{(m)}F \rightarrow \K_1^{(m)}\kappa_v$ from \Cref{Milnorresiduehom}  is given on symbols by the  rule
$$\qquad\{a,b\} \mapsto(-1)^{v(a)v(b)}\{ \overline {a^{-v(b)} b^{v(a)}}\}~~~~ \mbox{for $a,b \in F^\times$}\,.$$

Recall that the valuation $v$ on $F$ is \emph{henselian} if it extends uniquely to every finite field extension of $F$, or equivalently, if for any polynomial $f\in\mc{O}_v[X]$ any simple root of $f$ in $\kappa_v$ is the residue of a root of $f$ in $\mc{O}_v$ (see \cite[Theorem 4.1.3]{EP05}).

\begin{prop}\label{cdvfk2} 
Assume that $v$ is henselian, $\car(\kappa_v)$ does not divide $ m$ and $\K_{n}^{(m)}{\kappa_v} = 0$. Then $\partial_v:\K_{n}^{(m)} {F}\to \K_{n-1}^{(m)}{\kappa_v}$ is an isomorphism. Furthermore, any element $a\in\mg{F}$ such that $v(a)$ coprime to $m$ is a slot of every element of $\K_{n}^{(m)}{F}$.
\end{prop}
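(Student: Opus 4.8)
The plan is to use the standard structure of Milnor $K$-groups of a henselian $\zz$-valued field. The decisive input is that, since $v$ is henselian and $m$ is invertible in $\kappa_v$, every element of $1+\mf{m}_v$ is an $m$-th power in $\mc{O}_v^\times$: for $w\in 1+\mf{m}_v$ the polynomial $X^m-w\in\mc{O}_v[X]$ reduces modulo $\mf{m}_v$ to $X^m-1$, whose root $1$ is simple because the derivative takes the value $m\neq 0$ in $\kappa_v$, so henselianity yields a root $x\equiv 1\pmod{\mf{m}_v}$. Consequently any symbol in $\K_\bullet^{(m)}F$ having a slot in $1+\mf{m}_v$ vanishes, being an $m$-th multiple of another symbol. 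From this I would deduce that, writing $U\subseteq\K_n^{(m)}F$ for the subgroup generated by symbols $\{u_1,\dots,u_n\}$ of units of $\mc{O}_v$, three homomorphisms are well defined: a residue map $\bar s\colon U\to\K_n^{(m)}\kappa_v$, $\{u_1,\dots,u_n\}\mapsto\{\bar u_1,\dots,\bar u_n\}$; a lifting map $\lambda\colon\K_n^{(m)}\kappa_v\to U$ raising each parameter to a chosen unit lift; and, after fixing a uniformizer $\pi$ (so $v(\pi)=1$), a map $\rho_\pi\colon\K_{n-1}^{(m)}\kappa_v\to\K_n^{(m)}F$, $\{\bar u_2,\dots,\bar u_n\}\mapsto\{\pi,u_2,\dots,u_n\}$. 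In each case well-definedness reduces to independence of the chosen lifts (two lifts of one residue differ by a factor in $1+\mf{m}_v$), to the fact that a Steinberg relation $\bar u_i+\bar u_{i+1}=1$ lifts as $u_i+(1-u_i)=1$ with $1-u_i$ again a unit, and to compatibility with multiplication by $m$.

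For the isomorphism I would first observe that $\K_n^{(m)}F=U+\rho_\pi(\K_{n-1}^{(m)}\kappa_v)$: splitting each slot of a symbol as a power of $\pi$ times a unit by multilinearity and using $\{x,x\}=\{x,-1\}$ (a standard consequence of the Steinberg relation) to remove repeated occurrences of $\pi$ reduces every symbol to one of these two shapes. The defining formula for $\partial_v$ gives $\partial_v\circ\rho_\pi=\id$ since $v(\pi)=1$, so $\partial_v$ is surjective; and $U\subseteq\ker\partial_v$ trivially, so applying $\partial_v$ to any decomposition $\alpha=\gamma+\rho_\pi(\beta)$ of an element $\alpha\in\ker\partial_v$ forces $\beta=0$, whence $\ker\partial_v=U$. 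Finally $\bar s$ and $\lambda$ are mutually inverse, again by lift-independence, so $U\cong\K_n^{(m)}\kappa_v=0$; thus $\partial_v$ is injective as well, hence an isomorphism, with inverse $\rho_\pi$.

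For the last assertion, fix $a\in\mg F$ with $\gcd(v(a),m)=1$, choose $d'$ with $d'v(a)=1+km$ for some $k\in\zz$, and write $a=\pi^{v(a)}u$ with $u\in\mc{O}_v^\times$. Given $\alpha\in\K_n^{(m)}F$, I would write $\partial_v\alpha$ as a finite sum of symbols $\{\bar u_2^{(j)},\dots,\bar u_n^{(j)}\}$ and lift, so that $\alpha=\rho_\pi(\partial_v\alpha)=\sum_j\{\pi,u_2^{(j)},\dots,u_n^{(j)}\}$. Substituting $\pi=a^{d'}u^{-d'}(\pi^{-k})^m$ and expanding by multilinearity in the first slot of each summand, the $(\pi^{-k})^m$-contribution dies by $m$-torsion and the all-unit symbol coming from $u^{-d'}$ lies in $U=0$, leaving $d'\{a,u_2^{(j)},\dots,u_n^{(j)}\}$; summing gives $\alpha=\{a\}\cdot\bigl(d'\sum_j\{u_2^{(j)},\dots,u_n^{(j)}\}\bigr)$, so $a$ is a slot of $\alpha$. (Equivalently, once $\ker\partial_v=0$ is known one may note via the residue formula that $\partial_v\bigl(\{a\}\cdot\lambda(\partial_v\alpha)\bigr)=v(a)\cdot\partial_v\alpha$, so $\alpha-\{a\}\cdot d'\lambda(\partial_v\alpha)$ lies in $\ker\partial_v$ and hence vanishes.) The only genuine content is the first statement, and within it the crux is the identity $\ker\partial_v=0$, that is, that the unit symbols $U$ embed into $\K_n^{(m)}\kappa_v$; this is exactly where henselianity is indispensable, through $1+\mf{m}_v\subseteq(\mc{O}_v^\times)^m$. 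Alternatively, one could simply invoke the well-known split exact sequence $0\to\K_n^{(m)}\kappa_v\to\K_n^{(m)}F\xrightarrow{\partial_v}\K_{n-1}^{(m)}\kappa_v\to 0$ valid for a henselian $\zz$-valuation $v$ with $\car(\kappa_v)\nmid m$, from which both assertions are immediate.
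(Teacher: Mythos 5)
Your proof is correct, but it takes a genuinely different, more self-contained route than the paper. The paper simply invokes \cite[Corollary 7.1.10]{GS} (noting that its proof uses only henselianity, not completeness) to get that $\partial_v$ is an isomorphism, and then deduces the slot statement in one line from $\partial_v(\{a\}\cdot\K_{n-1}^{(m)}F)=v(a)\cdot \K_{n-1}^{(m)}\kappa_v=\K_{n-1}^{(m)}\kappa_v$ together with injectivity of $\partial_v$ --- which is exactly your parenthetical variant, and your closing sentence about the split exact sequence is in effect the paper's actual proof. What you do instead is reprove the needed structure theory from the single henselian input $1+\mfm_v\subseteq (\mg{\mc{O}}_v)^m$, which buys a citation-free argument and makes visible where henselianity enters; the paper's route buys brevity. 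One caveat: the well-definedness of $\bar s\colon U\to \K_n^{(m)}\kappa_v$ does not follow from the checks you list, because $U$ is a subgroup of $\K_n^{(m)}F$ and not a group presented by unit symbols subject to those relations --- a relation among unit symbols inside $\K_n^{(m)}F$ could a priori be a consequence of relations passing through non-unit parameters. This does not damage your proof: all you need is $U=0$, and that already follows from what you do establish, namely that $\lambda$ is well defined on $\K_n^{(m)}\kappa_v$ (which \emph{is} given by generators and relations, so lift-independence, the lifted Steinberg relation and $m$-torsion suffice there) and that every unit symbol differs from $\lambda$ of its residue symbol only by symbols with a slot in $1+\mfm_v$, which vanish; since $\K_n^{(m)}\kappa_v=0$ by hypothesis, every unit symbol is $0$. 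If you do want $\bar s$ itself, define it on all of $\K_n^{(m)}F$ by $\alpha\mapsto \partial_v(\{\pi\}\cdot\alpha)$, using \Cref{Milnorresiduehom} in degree $n+1$, and restrict to $U$; the rest of your argument (the decomposition $\K_n^{(m)}F=U+\rho_\pi(\K_{n-1}^{(m)}\kappa_v)$, $\partial_v\circ\rho_\pi=\mathrm{id}$, and the substitution $\pi=a^{d'}u^{-d'}(\pi^{-k})^m$ for the slot claim) is sound as written.
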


\begin{proof} 
By \cite[Corollary 7.1.10]{GS}, we obtain that $\partial_v:\K_{n}^{(m)} {F}\to \K_{n-1}^{(m)}{\kappa_v}$ is an isomorphism: in fact, while the statement of \cite[Corollary 7.1.10]{GS} assumes $v$ to be complete, the proof only uses  that $v$ is henselian.  
Consider $a\in\mg{F}$ with $v(a)$ coprime to $m$. 
Then $\partial_v(\{a\}\cdot\K_{n-1}^{(m)}F)=v(a)\cdot \K_{n-1}^{(m)}\kappa_v=\K_{n-1}^{(m)}\kappa_v$, and as $\partial_v$ is injective, we conclude that $a$ is a slot of every element of $\K_n^{(m)}F$.
\end{proof}

Let $m$ and $n$ be positive integers with $m \geq 2$ and
let $\alpha\in \K_n^{(m)}F$. 
We call $\partial_v(\alpha)$ the \emph{ramification of $\alpha$ at $v$}. 
We call \emph{$\alpha$ unramified at $v$} if $\partial_v(\alpha) = 0$ in $\K_{n-1}^{(m)}\kappa_v$, otherwise we say that $\alpha$ is \emph{ramified at $v$}.
We say that a finite field extension $L/F$ \emph{splits the ramification of $\alpha$ at $v$} if for every $\zz$-valuation $w$ on $L$ with $\mc O_v \subseteq \mc O_w$ we have $\partial_w(\alpha_L) =0$.

\begin{lem}\label{L:splitram}
Let $f\in\mg{F}$ be such that $v(f)$ is coprime to $m$ and let $L/F$ be a finite field extension $L/F$ with $f\in L^{\times m}$.
Then $L/F$ splits the ramification at $v$ of all elements of $\K_n^{(m)}F$.
\end{lem}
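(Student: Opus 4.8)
The plan is to reduce the statement to the case where $L/F$ is itself a cyclic (Kummer) extension of degree dividing $m$, and then to apply \Cref{cdvfk2} over a suitably chosen henselization. First I would let $w$ be any $\zz$-valuation on $L$ lying over $v$, i.e.\ with $\mc O_v\subseteq \mc O_w$. Write $e=e(w/v)$ for the ramification index, so that $w(g)=e\cdot v(g)$ for all $g\in\mg F$. Since $f\in L^{\times m}$, we have $w(f)\in m\zz$, hence $m\mid e\cdot v(f)$; as $v(f)$ is coprime to $m$, this forces $m\mid e$. Thus $e$ is a multiple of $m$.

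Now the idea is to exhibit, inside $L$ (or inside the henselization of $L$ at $w$), an element whose $w$-value is coprime to $m$, so that \Cref{cdvfk2} applies. Concretely: since $v(f)$ is coprime to $m$, pick integers $s,t$ with $sv(f)+tm=1$, and consider $h\in L$ with $h^m=f$ (such an $h$ exists because $f\in L^{\times m}$). Then $w(h)=\tfrac1m w(f)=\tfrac{e}{m}v(f)$. This need not be coprime to $m$ on the nose, but the element $h$ together with a uniformizer argument does the job: passing to the henselization $L_w$ of $L$ at $w$, the residue field is $\kappa_w$, a finite extension of $\kappa_v$, so $\car(\kappa_w)\nmid m$; and since $\kappa_v$ is not assumed algebraically closed here we cannot invoke $\K_{n-1}^{(m)}\kappa_w=0$ directly. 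So instead I would argue as follows: the key point is that $\partial_w(\alpha_L)$ for $\alpha\in\K_n^{(m)}F$ depends only on $\alpha_{L_w}$, and $\alpha_{L_w}$ already lies in the image of $\K_n^{(m)}F_v\to\K_n^{(m)}L_w$ where $F_v$ is the henselization of $F$ at $v$. Over $F_v$, the extension $L_w/F_v$ contains an $m$-th root of $f$, and $v(f)$ coprime to $m$ means $f$ together with a uniformizer $\pi_v$ generate $\mg{F_v}/\mg{\mc O_v}\mg{F_v}^m$ appropriately; more precisely $f=\pi_v^{v(f)}\cdot u$ with $u\in\mg{\mc O_v}$, and since $\gcd(v(f),m)=1$ the field $F_v(f^{1/m})$ is totally ramified of degree $m$ over $F_v$ and a uniformizer of it is $\pi_w:=f^{1/m}/(\text{unit})$... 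Let me reorganize.

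Cleaner approach: I would directly show $w(f^{1/m})$ is coprime to... no. The honest clean route is: in $L_w$ choose $h$ with $h^m=f$; then for $\alpha=\{a,b\}$ with $a,b\in\mg F$ one computes $\partial_w(\alpha_L)$ using $w$-values which are all multiples of $e$, hence multiples of $m$, so $\partial_w(\{a,b\})=(-1)^{w(a)w(b)}\{\overline{a^{-w(b)}b^{w(a)}}\}$ has exponents divisible by $m$ — wait, $w(a)=e\cdot v(a)$ is a multiple of $m$ (since $m\mid e$), so $\overline{a^{-w(b)}b^{w(a)}}$ is an $m$-th power in $\kappa_w$, whence its class in $\K_1^{(m)}\kappa_w=\scg{\kappa_w}$ is trivial, and $(-1)^{w(a)w(b)}=1$. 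Since symbols of the form $\{a,b\}$ with $a,b\in\mg F$ generate the image of $\K_n^{(m)}F$ in $\K_n^{(m)}L$ for $n=2$, and more generally symbols $\{a_1,\dots,a_n\}$ with $a_i\in\mg F$ generate the image for general $n$, and $m\mid e=e(w/v)$ makes every $w(a_i)$ divisible by $m$, linearity of $\partial_w$ finishes the proof: $\partial_w(\alpha_L)=0$ for every $\alpha\in\K_n^{(m)}F$.

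The main obstacle, and the step requiring care, is the very first reduction: justifying that $m\mid e(w/v)$ for \emph{every} $\zz$-valuation $w$ on $L$ with $\mc O_v\subseteq\mc O_w$ — this uses only $f\in L^{\times m}$ and $\gcd(v(f),m)=1$ as indicated, and is elementary. After that, the computation of $\partial_w$ on symbols $\{a_1,\dots,a_n\}$ with all $a_i\in\mg F$ is immediate from the explicit formula in \Cref{Milnorresiduehom}: each $v(a_i)$ gets multiplied by $e$, which is divisible by $m$, killing the residue symbol in $\K_{n-1}^{(m)}\kappa_w$. I expect the subtlety to be purely bookkeeping: being careful that the generators of $\K_n^{(m)}F$ (and hence of its image in $\K_n^{(m)}L$) really are the symbols with \emph{$F$-rational} slots, so that the divisibility $m\mid w(a_i)$ applies to all of them simultaneously.
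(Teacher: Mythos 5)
Your final argument is correct and rests on exactly the same key step as the paper's proof: from $f\in L^{\times m}$ and $\gcd(v(f),m)=1$ one gets that $m$ divides the ramification index $e=[w(\mg{L}):w(\mg{F})]$ for every $\zz$-valuation $w$ on $L$ lying over $v$. The only difference is in how the vanishing of $\partial_w(\alpha_L)$ is then concluded: the paper quotes the compatibility formula $\partial_w(\alpha_L)=e\cdot\partial_v(\alpha)$ from \cite[Remarks 7.1.6 (2)]{GS}, whereas you reprove the needed instance by hand, noting that $m\mid w(a)$ for all $a\in\mg{F}$ so that images of $F$-symbols become symbols in $w$-units modulo $m$ (your abandoned henselization detour plays no role, and the remaining details -- the sign for odd $m$ and the reduction of a general symbol $\{a_1,\dots,a_n\}$ to one with unit entries -- are routine).
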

\begin{proof}
Consider a $\zz$-valuation $w$ of $L$ such that $\mc{O}_w\cap F=\mc{O}_v$.
We have that $[w(\mg{L}):w(\mg{F})]\leq [L:F]<\infty$.
We set $e=[w(\mg{L}):w(\mg{F})]$.
Since $v(f)$ is coprime to $m$ and $f\in L^{\times m}$, it follows that $m$ divides $e$.
By \cite[Remarks 7.1.6 (2)]{GS}, we conclude that $\partial_w(\alpha_L)=e\cdot \partial_v(\alpha)=0$ in $\K_n^{(m)}L$ for every $\alpha\in\K_n^{(m)}F$.
\end{proof}

\section{Symbol algebras and linkage}\label{S:cyclic} 

Let $F$ be a field and $m$ a positive integer.
In this section, we connect strong linkage of $\K_2^{(m)}F$ to strong linkage of $F$ in degree $m$ as we defined it in the introduction, namely in terms of central simple algebras. This is done via the Merkurjev-Suslin Theorem, which relates the Brauer group to the second Milnor $K$-group of a field. 
Our main reference for the Brauer group of a field is~\cite{GS}.
\medskip

The Brauer group of $F$ and its $m$-torsion part are denoted by $\br{F}$ and by $\brm{F}$, respectively. 
Let $A$ be a central simple $F$-algebra.
We denote  by $[A]$ its  class in $\br{F}$.
Recall that $[A]=0$ if and only if $A$ is \emph{split}, i.e.~ isomorphic to  $\matr{d}(F)$ for some positive integer $d$. 
For a field extension $L/F$, any central simple $F$-algebra $A$ gives rise to a central simple $L$-algebra $A_L=A\otimes_FL$, and this  
gives rise to a natural homomorphism $\br{F} \rightarrow \br{L}$ defined by $[A] \mapsto [A_L]$, which restricts to a homomorphism  $\brm{F} \rightarrow \brm{L}$.

We assume in the sequel that $\car(F)$ does not divide $m$ and we denote by $\omega$ a primitive $m$th root of unity
contained in an algebraic extension of $F$.
For $a\in\mg F$,
the splitting field $X^m-a$ over $F$ contains $\omega$, and we denote this extension by $F(\omega,\sqrt[m]{a})$, or simply by 
$F(\sqrt[m]{a})$ when $\omega\in F$.
Recall that, if $\omega\in F$, then every cyclic field extension of $F$ of degree $m$ is of the form $F(\sqrt[m]{a})$ for some $a\in\mg{F}$. 

If $\omega\in F$, then for $a, b\in \mg F$ we denote by $(a,b)_{F,\omega}$ the 
$F$-algebra generated by two elements $x,y$ subject to the relations 
$$x^m=a,\, y^m=b \mbox{ and } yx = \omega xy\,.$$ 
Such algebras are called \emph{symbol algebras}.
By \cite[Chapter 11, Theorem 1]{Dra}, for $a,b\in\mg F$ the $F$-algebra $(a,b)_{F,\omega}$ is central simple of degree $m$.

%
%
%
%
%
%
%
%
%
%
%
%
%


\begin{thm}[Merkurjev-Suslin]\label{MS}
If $\omega\in F$, then
the rule $\{a,b\} \mapsto [(a,b)_{F,\omega}]$ for $a,b\in\mg F$ determines 
an isomorphism $$\Phi_\omega: \K_2^{(m)}F \rightarrow \brm{F}\,.$$
\end{thm}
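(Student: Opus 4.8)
The statement to prove is that, when $\omega \in F$, the assignment $\{a,b\} \mapsto [(a,b)_{F,\omega}]$ extends to a well-defined isomorphism $\Phi_\omega \colon \K_2^{(m)}F \to \brm F$. Since this is the celebrated theorem of Merkurjev–Suslin, a self-contained proof is out of the question; the plan is to assemble the statement from the standard building blocks, citing \cite{GS}. So my proof proposal is really a proof-assembly proposal.

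The plan is to split the claim into three parts. First, well-definedness of $\Phi_\omega$: one must check that the symbol algebra construction respects all the defining relations of $\K_2^{(m)}F$. Concretely, the map $(a,b) \mapsto [(a,b)_{F,\omega}]$ from $(\mg F)^2$ to $\brm F$ is bimultiplicative (this is the classical isomorphism $(a,b_1 b_2)_{F,\omega} \sim (a,b_1)_{F,\omega} \otimes_F (a,b_2)_{F,\omega}$ in the Brauer group, together with the analogous identity in the first slot, see \cite[Chapter~11]{Dra} or \cite[Chapter~7]{GS}); it kills the Steinberg relation, i.e.\ $(a,1-a)_{F,\omega}$ is split for $a \neq 0,1$ (the standard argument: $X^m - a$ has a root in the splitting field of $X^m - (1-a)$ forcing a common subfield, which in degree~$m$ forces splitting --- or directly, the Steinberg relation holds in $\K_2$ and maps compatibly); and, since $(a,b)_{F,\omega}$ has exponent dividing $m$ in $\br F$ (its class lies in $\brm F$ because its degree is $m$), the relation $m \cdot \K_2^{(m)}F = 0$ is respected. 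By the universal property defining $\K_2^{(m)}F$ via generators and relations, these checks produce the homomorphism $\Phi_\omega$. Second, surjectivity: this is Merkurjev's theorem that $\brm F$ is generated by classes of symbol (= cyclic) algebras of degree $m$ when $\mu_m \subseteq F$; cite \cite[Theorem~7.2.3 or 7.3.x]{GS}. Third, injectivity: this is the hard half, the Merkurjev–Suslin theorem proper, identifying $\K_2^{(m)}F$ with $H^2(F,\mu_m^{\otimes 2})$ and hence with $\brm F$; again cite \cite[Chapter~8]{GS} (or the original \cite{MerkurjevSuslin}, but the paper's reference conventions favour \cite{GS}).

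The main obstacle, mathematically, is of course injectivity of $\Phi_\omega$ --- the Merkurjev–Suslin theorem is a deep result whose proof occupies a substantial part of \cite{GS} and uses $K$-cohomology, Gersten-type resolutions, and a delicate specialization argument. Since the present paper legitimately treats this as a black box (it is invoked only to translate the $\K_2^{(m)}$-formulation of strong linkage into the central-simple-algebra formulation), the only ``work'' in the proof as it appears here is the bookkeeping for well-definedness: verifying bimultiplicativity, the Steinberg relation, and $m$-torsion, each of which is classical and referenced. I would therefore write the proof as a short paragraph citing \cite{GS} for each of the three parts, noting explicitly that well-definedness follows from the standard properties of symbol algebras and the presentation of $\K_2^{(m)}F$ by generators and relations, and that surjectivity and injectivity are, respectively, Merkurjev's theorem and the Merkurjev–Suslin theorem.
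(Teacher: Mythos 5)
Your proposal is correct and follows essentially the same route as the paper, which simply cites \cite[Theorem 2.5.7]{GS} for the whole statement (that reference already packages well-definedness, surjectivity and injectivity). Your extra bookkeeping on bimultiplicativity, the Steinberg relation and $m$-torsion is fine but unnecessary beyond that single citation.
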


\begin{proof}
See \cite[Theorem 2.5.7]{GS}.
\end{proof}

We need the following slightly extended version of \cite[Corollary~4.7.6]{GS}.
For a finite field extension $F'/F$ we denote by $\N_{F'/F}:F'\to F$ the norm map. 

\begin{lem}\label{firstslotalg}
Let $A$ be a central simple $F$-algebra of degree $m$ and $a\in \mg F$. 
Assume that $\omega\in F$ and that $A_{F(\!\sqrt[m]{a})}$ is split.
Then $A \simeq (a,b)_{F, \omega}$ for some  $b \in \mg F$.
\end{lem}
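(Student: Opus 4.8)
The statement to prove is Lemma~\ref{firstslotalg}: if $\omega \in F$, $A$ is central simple of degree $m$, and $A_{F(\sqrt[m]{a})}$ is split, then $A \simeq (a,b)_{F,\omega}$ for some $b \in \mg F$. Since this is billed as a slight extension of \cite[Corollary~4.7.6]{GS}, the natural plan is to reduce to the cyclic-algebra crossed-product description. First I would dispose of the degenerate case where $a \in F^{\times m}$: then $F(\sqrt[m]{a}) = F$, so $A$ itself is split, hence $A \simeq \matr{m}(F) \simeq (a,1)_{F,\omega}$ (the symbol algebra with a trivial slot is split), and we are done with $b = 1$. So assume $a \notin F^{\times m}$; I still need to be a little careful, because $X^m - a$ need not be irreducible — but since $\omega \in F$, the extension $L = F(\sqrt[m]{a})$ is cyclic, say of degree $d \mid m$, with $d$ the order of $a$ in $\scg F$.

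**Main steps.** The key step is to invoke the standard fact (this is exactly the content of \cite[Corollary~4.7.6]{GS} in the case $[L:F] = m$, i.e.\ $X^m - a$ irreducible) that a central simple $F$-algebra of degree $m$ split by a cyclic extension $L/F$ of degree $m$ is a cyclic algebra $(L/F, \tau, b)$ for a generator $\tau$ of $\gal(L/F)$ and some $b \in \mg F$; and that with $L = F(\sqrt[m]{a})$ and the Kummer generator chosen so that $\tau(\sqrt[m]{a}) = \omega \sqrt[m]{a}$, this cyclic algebra is precisely the symbol algebra $(a,b)_{F,\omega}$. The genuine extension over \cite{GS} is handling $[L:F] = d < m$. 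In that case, write $m = de$. The crossed product / cyclic algebra argument gives $A \simeq (L/F, \tau, b_0)$ as an algebra of degree $d$ for some $b_0 \in \mg F$ and $\tau$ a generator of $\gal(L/F)$ — but wait, $A$ has degree $m$, not $d$, so $A$ cannot literally be this degree-$d$ cyclic algebra unless $e = 1$. The correct reduction: $A$ of degree $m$ split by $L$ of degree $d$ has index dividing $d$, so $A \simeq \matr{e}(D)$ where $D$ is the underlying division algebra, and $[D] = [A]$ is split by $L$. Hmm — but I need $A$ itself, not just $[A]$, to be a symbol algebra of the given degree $m$. The cleanest route is therefore: by the $d = m$ case applied after replacing $m$ by $d$ we get $[A] = [(a, b_0)_{F,\psi}]$ for a primitive $d$th root of unity $\psi$; but I want to realize $A$ with the \emph{given} $\omega$ and degree $m$. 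Instead I would argue directly with the symbol $\{a\} \in \K_1^{(m)}F$: by Theorem~\ref{MS}, $[A] = \Phi_\omega(\{x,y\})$ for some $x,y \in \mg F$; the hypothesis that $A_{L}$ is split says $\{x,y\}_{L} = 0$, and I want to conclude $[A] \in \{a\} \cdot \K_1^{(m)}F$, i.e.\ $[A] = \Phi_\omega(\{a, b\})$ for some $b$, which by the degree-$m$ central simplicity of $(a,b)_{F,\omega}$ and the fact that $A$ has degree $m$ and the same Brauer class forces $A \simeq (a,b)_{F,\omega}$ by Wedderburn. So the heart is the $\K$-theoretic claim: an element of $\K_2^{(m)}F$ killed by $F(\sqrt[m]{a})/F$ has $a$ as a slot.

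**Proving the $\K$-theoretic claim.** To show $\ker\bigl(\K_2^{(m)}F \to \K_2^{(m)}L\bigr) \subseteq \{a\}\cdot \K_1^{(m)}F$ when $L = F(\sqrt[m]{a})$ is cyclic of degree $d$: this is the standard projection-formula / transfer argument. There is a transfer (corestriction) map $\N_{L/F}\colon \K_2^{(m)}L \to \K_2^{(m)}F$ with $\N_{L/F} \circ \mathrm{res}_{L/F} = [L:F]\cdot \mathrm{id} = d \cdot \mathrm{id}$, and the projection formula $\N_{L/F}(\{ \sqrt[d]{a}\}\cdot \beta) = \{a\} \cdot \N_{L/F}(\beta)$ — wait, I should be careful: $a = (\sqrt[d]{a})^d$ in $L$, so $\{a\}_L = d\{\sqrt[d]{a}\}$; combined with the vanishing $\alpha_L = 0$ and the exact sequence of the cyclic extension (the sequence $\K_2^{(m)}L \xrightarrow{\N_{L/F}} \K_2^{(m)}F \xrightarrow{\{a\}\cdot(-)} \K_2^{(m)}F/(\{a\}\cdot\K_1^{(m)}F)$... or rather the exactness statement: $\ker(\mathrm{res}\colon \K_n F \to \K_n L) = \{a\}\cdot \K_{n-1}F + (\text{stuff from }d\text{-divisibility})$). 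The robust way to phrase this is via the Bass–Tate / Milnor exact sequence for a degree-$d$ cyclic (even just degree-$d$ cyclic Kummer) extension $L = F(\sqrt[d]{a})$: there is an exact sequence $\K_{n-1}^{(m)}F \xrightarrow{\{a\}\cdot} \K_n^{(m)}F \xrightarrow{\mathrm{res}} \K_n^{(m)}L$, at least after modding out by appropriate torsion, when $\mu_d \subseteq F$ — and since $d \mid m$ and the root of unity $\omega \in F$, $\mu_d \subseteq F$ holds. Granting this exact sequence, $\alpha \in \ker(\mathrm{res})$ immediately gives $\alpha = \{a\}\cdot\beta$ for some $\beta \in \K_1^{(m)}F$, and writing $\beta = \{b\}$ with $b \in \mg F$ finishes it.

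**Main obstacle.** The main obstacle is pinning down the exact cohomological/$\K$-theoretic input in the case $[L:F] = d < m$ — i.e.\ getting the clean exact sequence $\K_1^{(m)}F \xrightarrow{\cup \{a\}} \K_2^{(m)}F \xrightarrow{\mathrm{res}} \K_2^{(m)}L$ rather than only the statement for $d = m$. I expect the cleanest fix is simply to note $a$ has order $d$ in $\scg F$, set $L = F(\sqrt[d]{a})$ (a genuine cyclic degree-$d$ Kummer extension with $\mu_d \subseteq F$ since $\omega^{m/d}$ is a primitive $d$th root of unity), apply the degree-$d$ analogue of \cite[Corollary~4.7.6]{GS} (crossed product theory) to get $A \simeq \matr{e}(C)$ with $C$ a cyclic $F$-algebra of degree $d$ split by $L$, hence $C \simeq (L/F,\tau,c)$; then $\matr{e}(C) \simeq (a,c^{?})_{F,\omega}$ by comparing Brauer classes via $\Phi_\omega$ and using that $(a,c)_{F,\omega}$ has degree $m$ and Brauer class $e \cdot [C]$ — choosing the slot $b$ so that $\Phi_\omega(\{a,b\}) = [A]$, which is possible because $[A]$ lies in the image of $\K_1^{(m)}F$ under $b \mapsto \Phi_\omega(\{a,b\})$ precisely by the exact sequence above. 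Wedderburn (equality of Brauer class plus equality of degree $m$) then upgrades the Brauer-class equality to an isomorphism $A \simeq (a,b)_{F,\omega}$.
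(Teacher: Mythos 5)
There is a genuine gap, and it is at the heart of your argument. The step you ``grant'' --- exactness of $\K_1^{(m)}F \xrightarrow{\{a\}\cdot}\K_2^{(m)}F \xrightarrow{\mathrm{res}} \K_2^{(m)}F(\sqrt[m]{a})$, i.e.\ that every element of $\K_2^{(m)}F$ killed by $F(\sqrt[m]{a})$ has $a$ as a slot --- is not a Bass--Tate/Milnor fact available off the shelf; for $n=2$ it is precisely the content of \Cref{firstslotalgKgroupgen}, which the paper deduces \emph{from} the lemma you are asked to prove (together with \Cref{MS}). So your main route is circular relative to the intended logical order, and no independent proof or valid citation for that exact sequence is offered. (A smaller symptom of the same issue: \Cref{MS} gives $[A]=\Phi_\omega(\alpha)$ for some $\alpha\in\K_2^{(m)}F$, not for a single symbol $\{x,y\}$.) The non-circular route, which is the one the paper takes, stays entirely on the Brauer-group side: writing $z=\sqrt[m]{a}$, $L=F(z)$, $m'=[L:F]$, one checks via the norm $\N_{L/F}(z)=\omega^r z^{m'}$ that $a':=z^{m'}\in\mg F$, so that $L=F(\sqrt[m']{a'})$ is cyclic Kummer of degree $m'$ with $a'^{\,m/m'}=a$; then the classical cyclic/crossed-product result \cite[Corollary~4.7.6]{GS} gives $[A]=[(a',b)_{F,\omega^{m/m'}}]$ for some $b\in\mg F$, and the symbol-algebra identities of \cite[Chapter~11, Lemmas~5 and 6]{Dra} convert this class into $[(a,b)_{F,\omega}]$; equality of Brauer classes plus equality of degrees ($m$) then yields the isomorphism by Wedderburn, as you correctly note.

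Your fallback sketch in the last paragraph points in this classical direction but does not close the gap: you set $L=F(\sqrt[d]{a})$ with $d=[F(\sqrt[m]{a}):F]$, which is false in general (e.g.\ $m=4$, $a=b^2$ with $b\notin F^{\times 2}$: then $F(\sqrt[4]{a})=F(\sqrt{b})$ while $F(\sqrt{a})=F$); the correct statement is $L=F(\sqrt[d]{a'})$ for the \emph{different} element $a'=z^{d}$, and showing $a'\in\mg F$ requires the small norm argument above. Moreover, even granting the crossed-product presentation $[A]=[(L/F,\tau,c)]$, you defer the decisive conversion of this degree-$d$ class into a degree-$m$ symbol $(a,b)_{F,\omega}$ back to ``the exact sequence above'', i.e.\ back to the circular input, instead of using the explicit symbol-algebra relations (Draxl) that make the conversion unconditional.
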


\begin{proof}
Set $z=\sqrt[m]{a}$, $L=F(z)$ and $m'=[L:F]$. 
Then $z^m=a$ and $m'$ divides $m$. 
Note that $\N_{L/F}(z)=\omega^r \cdot z^{m'}$ for some $r\in\nat$ and hence $z^{m'} \in \mg F$. 
We set $a'=z^{m'}$ and $s=\frac{m}{m'}$ to obtain that $a'^s=a$ and $L=F(\!\sqrt[m']{a'})$.
As $[L:F]=m'$, \cite[Corollary~4.7.6]{GS} yields that $[A]=[(a',b)_{F,\omega^s}]$ in $\br{F}$ for some $b\in \mg F$, and using 
\cite[Chapter 11, Lemma 5 and 6]{Dra} we conclude that $[A]=[(a,b)_{F,\omega}]$.
\end{proof}

\begin{thm}\label{firstslotalgKgroupgen}
Assume that $[F(\omega) :F]$ is coprime to $m$ and let $a\in\mg{F}$. 
The kernel of the natural homomorphism $\K_2^{(m)} F \rightarrow \K_2^{(m)} {F(\omega,\sqrt[m]{a}})$ consists of the symbols  $\{a,b\}$ with $b\in \mg F$.  
\end{thm}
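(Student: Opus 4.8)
The plan is to reduce to the case $\omega\in F$ via a transfer (corestriction) argument, then invoke the Merkurjev-Suslin theorem together with \Cref{firstslotalg}. First, one inclusion is easy: if $b\in\mg F$, then $\{a,b\}$ becomes $\{a,b\}$ in $\K_2^{(m)}F(\omega,\sqrt[m]{a})$, and since $a\in (F(\omega,\sqrt[m]{a}))^{\times m}$, the symbol $\{a\}$ vanishes there, so $\{a,b\}=0$. Hence every such symbol lies in the kernel, and the content of the theorem is the reverse inclusion: any $\alpha\in\K_2^{(m)}F$ that dies in $\K_2^{(m)}F(\omega,\sqrt[m]{a})$ must have $a$ as a slot.

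For the reverse inclusion I would first handle the case $\omega\in F$. Here $L:=F(\sqrt[m]{a})$ is (a subfield of a) cyclic extension of $F$; more precisely, writing $a'=z^{m'}$ with $z=\sqrt[m]{a}$ and $m'=[L:F]$ as in the proof of \Cref{firstslotalg}, one has $L=F(\sqrt[m']{a'})$ cyclic of degree $m'\mid m$. Given $\alpha\in\K_2^{(m)}F$ in the kernel, transport it through the Merkurjev-Suslin isomorphism $\Phi_\omega$ of \Cref{MS} to get $[A]\in\brm F$ with $A$ of degree $m$ (after adjusting by a matrix factor) and $A_{L}$ split. Then \Cref{firstslotalg} gives $A\simeq (a,b)_{F,\omega}$ for some $b\in\mg F$, so $\alpha=\Phi_\omega^{-1}[A]=\{a,b\}$, as desired.

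To pass from $\omega\in F$ to the general hypothesis that $[F(\omega):F]$ is coprime to $m$, set $E=F(\omega)$, let $d=[E:F]$, and use the restriction–corestriction maps on $\K_2^{(m)}$. Since $d$ is coprime to $m$ and $\mathrm{cor}\circ\mathrm{res}$ is multiplication by $d$ on $\K_2^{(m)}F$, multiplication by $d$ is an automorphism of $\K_2^{(m)}F$; in particular $\mathrm{res}\colon \K_2^{(m)}F\to\K_2^{(m)}E$ is injective. Now suppose $\alpha\in\K_2^{(m)}F$ maps to $0$ in $\K_2^{(m)}F(\omega,\sqrt[m]{a})$. Then $\alpha_E\in\K_2^{(m)}E$ maps to $0$ in $\K_2^{(m)}E(\sqrt[m]{a})=\K_2^{(m)}F(\omega,\sqrt[m]{a})$, so by the case already treated there is $c\in\mg E$ with $\alpha_E=\{a,c\}$ in $\K_2^{(m)}E$. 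Applying corestriction $\mathrm{cor}_{E/F}$ and the projection formula $\mathrm{cor}_{E/F}(\{a,c\})=\{a,\N_{E/F}(c)\}$, we obtain $d\cdot\alpha=\{a,\N_{E/F}(c)\}$ in $\K_2^{(m)}F$. Finally, since $d$ is invertible modulo $m$, pick $d'\in\zz$ with $dd'\equiv 1\pmod m$ and set $b=\N_{E/F}(c)^{d'}\in\mg F$; then $\alpha=dd'\cdot\alpha=d'\cdot\{a,\N_{E/F}(c)\}=\{a,b\}$.

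The main obstacle is making the transfer argument fully rigorous in Milnor $K$-theory modulo $m$: one needs the corestriction map $\mathrm{cor}_{E/F}\colon\K_2^{(m)}E\to\K_2^{(m)}F$, the identity $\mathrm{cor}\circ\mathrm{res}=d$, and the projection formula $\mathrm{cor}_{E/F}(\{a,c\})=\{\mathrm{res}(a),c\}\mapsto\{a,\N_{E/F}(c)\}$ for $a\in\mg F$, all compatible with the mod-$m$ reduction. These are standard (they are exactly the ingredients already used in the proof of \Cref{firstslotalg} via \cite[Corollary~4.7.6]{GS} and in \cite[Chapter~7]{GS}), so the real work is just citing them correctly; the geometric and valuation-theoretic input plays no role here. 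A secondary point to check is the bookkeeping identifying $E(\sqrt[m]{a})$ with $F(\omega,\sqrt[m]{a})$, which is immediate since $E=F(\omega)$.
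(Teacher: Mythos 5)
Your proposal is correct and follows essentially the same route as the paper: pass to $F'=F(\omega)$, use \Cref{MS} and \Cref{firstslotalg} to write $\alpha_{F'}=\{a,c\}$, then descend via the projection formula $[F':F]\cdot\alpha=\{a,\N_{F'/F}(c)\}$ (the paper cites \cite[Remark 7.3.1]{GS} for exactly this corestriction identity) and invert $[F':F]$ modulo $m$. Your parenthetical remark about adjusting by a matrix factor to get a degree-$m$ representative before applying \Cref{firstslotalg} is a point the paper leaves implicit, but it is harmless since the index of the class divides $[F'(\sqrt[m]{a}):F']$, which divides $m$.
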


\begin{proof}
Set $F' = F(\omega)$. Then $F'(\sqrt[m]{a})= F(\omega,\sqrt[m]{a})$.
Clearly, for any $b\in\mg{F}$, the symbol $\{ a, b \}$ belongs to $\ker (\K_2^{(m)} F \rightarrow \K_2^{(m)} F'(\sqrt[m]{a}))$. 
For the converse, consider $\alpha \in \ker (\K_2^{(m)} F \rightarrow \K_2^{(m)} F'(\sqrt[m]{a}))$.
Since $\Phi_\omega(\alpha_{F'(\sqrt[m]{a})})= 0$, we obtain
by \Cref{firstslotalg} that $\Phi_\omega(\alpha_{F'}) = [(a, c)_{F',\omega}]$ for some $c \in \mg{F'}$. Since $\Phi_\omega$ is injective, we have that $\alpha_{F'} = \{a,c\}$ in $\K_2^{(m)}F'$. 
By \cite[Chapter 7, Remark 7.3.1]{GS}, it follows that 
$[F':F]\cdot \alpha = \{a, \N_{F'/F}(c)\}$. 
Since $[F':F]$ is coprime to $m$, there exists $r\in\zz$ with 
$r\cdot[F':F]  \equiv 1\bmod{m\zz}$. Then $\alpha = \{a, \N_{F'/F}(c) ^r\}$ in $\K_2^{(m)}F$.
\end{proof}

\begin{cor}\label{C:K-linkage-cyclic-ext}
Assume that $[F(\omega) :F]$ is coprime to $m$. 
Then $\K_2^{(m)}F$ is strongly linked if and only if any finite subgroup of $\K_2^{(m)}F$ is contained in the kernel of the natural map $\K_2^{(m)}F\to\K_2^{(m)}F(\omega,\sqrt[m]{a})$ for some $a\in\mg{F}$.
\end{cor}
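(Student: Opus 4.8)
The plan is to deduce the statement directly from \Cref{firstslotalgKgroupgen}. For a fixed $a\in\mg F$, that theorem applies precisely because $[F(\omega):F]$ is coprime to $m$, and it identifies the kernel of the natural map $\K_2^{(m)}F\to\K_2^{(m)}F(\omega,\sqrt[m]{a})$ with the set of symbols $\{a,b\}$ for $b\in\mg F$, that is, with the subgroup $\{a\}\cdot\K_1^{(m)}F$ of $\K_2^{(m)}F$. Since every element of $\K_1^{(m)}F$ is a symbol $\{a\}$ with $a\in\mg F$, strong linkage of $\K_2^{(m)}F$ says exactly that every finite subset of $\K_2^{(m)}F$ is contained in one of these kernels, and the corollary merely replaces ``finite subset'' by ``finite subgroup''.

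For the forward implication, I would suppose $\K_2^{(m)}F$ strongly linked and let $G$ be a finite subgroup. Regarding $G$ as a finite subset and applying strong linkage yields $a\in\mg F$ with $G\subseteq\{a\}\cdot\K_1^{(m)}F$, and by \Cref{firstslotalgKgroupgen} this group is the kernel of $\K_2^{(m)}F\to\K_2^{(m)}F(\omega,\sqrt[m]{a})$, as required.

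For the converse, let $\mc S$ be an arbitrary finite subset of $\K_2^{(m)}F$ and let $G$ be the subgroup it generates. The one point needing an argument is that $G$ is finite: it is finitely generated, and since $m\cdot\K_2^{(m)}F=0$ every element of $G$ has order dividing $m$, so $G$ is a finitely generated torsion abelian group and hence finite. By hypothesis there is then $a\in\mg F$ with $G$ contained in the kernel of $\K_2^{(m)}F\to\K_2^{(m)}F(\omega,\sqrt[m]{a})$, which by \Cref{firstslotalgKgroupgen} equals $\{a\}\cdot\K_1^{(m)}F$; as $\mc S\subseteq G$, the element $a$ is a common slot for $\mc S$, proving strong linkage.

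I do not anticipate a genuine obstacle here: the corollary is a repackaging of \Cref{firstslotalgKgroupgen} together with the elementary fact that a finitely generated subgroup of a group of exponent dividing $m$ is finite. The only points that require care are that $\{a\}\cdot\K_1^{(m)}F$ is indeed a subgroup of $\K_2^{(m)}F$ (so that comparing it with a kernel is meaningful), that passing from the finite set $\mc S$ to the finitely generated, hence finite, subgroup it generates is legitimate, and that the hypothesis on $[F(\omega):F]$ is exactly what licenses the appeal to \Cref{firstslotalgKgroupgen}.
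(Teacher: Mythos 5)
Your argument is correct and follows the same route as the paper, which simply cites \Cref{firstslotalgKgroupgen} together with the definition of strong linkage; you have merely spelled out the details, including the useful observation that the subgroup generated by a finite subset is finite because $\K_2^{(m)}F$ has exponent dividing $m$. Nothing is missing and no different method is involved.
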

\begin{proof}
This follows directly from the definition and \Cref{firstslotalgKgroupgen}.
\end{proof}

\begin{cor}
Assume that $\omega\in F$.
Then $\K_2^{(m)}F$ is strongly linked if and only if  every finite number of central simple $F$-algebras of degree $m$ is split by a cyclic field extension of $F$ of degree dividing $m$.
\end{cor}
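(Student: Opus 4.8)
The plan is to reduce this corollary to the previous one, \Cref{C:K-linkage-cyclic-ext}, together with the Merkurjev-Suslin isomorphism $\Phi_\omega$. Since $\omega\in F$, the hypothesis $[F(\omega):F]=1$ is coprime to $m$, so both \Cref{C:K-linkage-cyclic-ext} and \Cref{MS} apply directly. The key translation is: under $\Phi_\omega$, a finite subset $\mc S\subseteq\K_2^{(m)}F$ corresponds to a finite subset of $\brm F$, and strong linkage of $\K_2^{(m)}F$ (the existence of a common slot $\{a\}$ for $\mc S$) corresponds, via \Cref{firstslotalgKgroupgen} and the fact that a class $[A]$ with $A$ of degree $m$ lies in the kernel of $\brm F\to\brm{F(\sqrt[m]{a})}$ precisely when $A_{F(\sqrt[m]{a})}$ is split, to the assertion that the corresponding algebras are split by the cyclic extension $F(\sqrt[m]{a})$ of degree dividing $m$.

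First I would spell out the forward direction. Suppose $\K_2^{(m)}F$ is strongly linked, and let $A_1,\dots,A_k$ be central simple $F$-algebras of degree $m$. Each $[A_i]\in\brm F$, so $\alpha_i=\Phi_\omega^{-1}([A_i])\in\K_2^{(m)}F$; by strong linkage there is a symbol $\{a\}\in\K_1^{(m)}F$, i.e.\ some $a\in\mg F$, with $\alpha_i\in\{a\}\cdot\K_1^{(m)}F$ for all $i$, say $\alpha_i=\{a,b_i\}$. Then $\Phi_\omega(\alpha_i)=[(a,b_i)_{F,\omega}]$, so $[A_i]=[(a,b_i)_{F,\omega}]$ in $\brm F$; since both $A_i$ and $(a,b_i)_{F,\omega}$ have degree $m$, they are isomorphic, and $(a,b_i)_{F,\omega}$ is split by $F(\sqrt[m]{a})$, which is cyclic of degree dividing $m$ over $F$. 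Conversely, suppose every finite family of degree-$m$ central simple $F$-algebras admits such a common cyclic splitting field. Given a finite subset $\mc S=\{\alpha_1,\dots,\alpha_k\}\subseteq\K_2^{(m)}F$, write $\alpha_i=\{c_i,d_i\}$ as a symbol (here I may need to note that $\alpha_i$ need not a priori be a single symbol, so instead I take $A_i$ to be a degree-$m$ representative of $\Phi_\omega(\alpha_i)$, which exists since $\Phi_\omega(\alpha_i)=[(c_i,d_i)_{F,\omega}]$ for any expression of $\alpha_i$ as a symbol — and every element of $\K_2^{(m)}F$ \emph{is} a symbol here by \Cref{MS} combined with the fact that $\brm F$ consists of symbol algebra classes). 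Applying the hypothesis to $A_1,\dots,A_k$ yields a cyclic extension $L/F$ of degree dividing $m$ splitting all $A_i$; by the normal basis / Kummer description, $L=F(\sqrt[m]{a})$ for some $a\in\mg F$. Then each $\Phi_\omega(\alpha_i)=[A_i]$ dies in $\brm L$, hence each $\alpha_i$ dies in $\K_2^{(m)}L$, so by \Cref{firstslotalgKgroupgen} each $\alpha_i$ is a symbol $\{a,b_i\}$, i.e.\ $\mc S\subseteq\{a\}\cdot\K_1^{(m)}F$, which is strong linkage of $\K_2^{(m)}F$.

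The step I expect to require the most care is the bookkeeping around expressing elements of $\K_2^{(m)}F$ and $\brm F$ as \emph{single} symbols versus sums of symbols, and the degree-versus-index matching: a cyclic field extension of degree $m$ splits $A$ iff $A$ (of degree $m$) is isomorphic to a cyclic algebra with that splitting field, which is exactly the content already packaged into \Cref{firstslotalg} and \Cref{firstslotalgKgroupgen}. Everything else is a direct unwinding of definitions through the isomorphism $\Phi_\omega$. In fact, once one observes that \Cref{C:K-linkage-cyclic-ext} with $\omega\in F$ already states strong linkage of $\K_2^{(m)}F$ is equivalent to every finite subgroup of $\K_2^{(m)}F$ being killed in some $\K_2^{(m)}F(\sqrt[m]{a})$, the corollary follows by transporting this equivalence across $\Phi_\omega$ and noting that a finite \emph{subset} of $\brm F$ generates a finite subgroup while a cyclic extension of degree dividing $m$ kills a class iff it kills the subgroup it generates. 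I would therefore present the proof as: ``Immediate from \Cref{C:K-linkage-cyclic-ext} and \Cref{MS}, using that every class in $\brm F$ is represented by a central simple algebra of degree $m$ and that $F(\omega)=F$.''
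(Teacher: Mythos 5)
Your overall route is the paper's route: the paper's proof is exactly "combine \Cref{firstslotalg}, \Cref{MS} and \Cref{C:K-linkage-cyclic-ext}", and your forward direction and the final reduction to \Cref{C:K-linkage-cyclic-ext} unwind this correctly. However, there is one genuine gap, in the backward direction, at the point where you pass from an arbitrary element $\alpha_i\in\K_2^{(m)}F$ to a central simple $F$-algebra \emph{of degree $m$} representing $\Phi_\omega(\alpha_i)$. Your justification --- that every element of $\K_2^{(m)}F$ is a single symbol "by \Cref{MS} combined with the fact that $\brm F$ consists of symbol algebra classes" --- is not a fact: Merkurjev--Suslin says $\brm F$ is \emph{generated} by classes of symbol algebras, not that every class is the class of one symbol algebra, and a class of period $m$ can have index larger than $m$ (so it need not have any degree-$m$ representative at all). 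Worse, the assertion you invoke is essentially a consequence of strong linkage, which is precisely what the backward direction is trying to establish, so the argument as written is circular.

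The repair is short and keeps your structure: write each $\alpha_i$ as a finite sum of symbols $\alpha_i=\sum_j\{c_{ij},d_{ij}\}$ and apply the hypothesis to the finite family of degree-$m$ symbol algebras $(c_{ij},d_{ij})_{F,\omega}$ (rather than to hypothetical degree-$m$ representatives of the $\Phi_\omega(\alpha_i)$ themselves). A common cyclic splitting field $L/F$ of degree dividing $m$ is, by Kummer theory (using $\omega\in F$), of the form $F(\sqrt[m]{a})$ for some $a\in\mg{F}$; then each $\Phi_\omega(\alpha_i)_L=\sum_j[(c_{ij},d_{ij})_{L,\omega}]=0$, hence $(\alpha_i)_L=0$ by the injectivity of $\Phi_\omega$ over $L$ (\Cref{MS} applies to $L$ since $\omega\in L$), and \Cref{firstslotalgKgroupgen} gives $\alpha_i=\{a,b_i\}$, i.e.\ $a$ is a common slot. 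With this change your proof matches the intended argument; the forward direction as you wrote it (common slot $a$, then $A_i\simeq(a,b_i)_{F,\omega}$ by equality of degrees, split by the Kummer extension $F(\sqrt[m]{a})$ of degree dividing $m$) is fine.
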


\begin{proof}
This follows 
 from \Cref{firstslotalg}, \Cref{MS} and \Cref{C:K-linkage-cyclic-ext}.
\end{proof}

\section{The unramified Brauer group} 
\label{S:Brauer-unramified}  

In this section we look at the unramified Brauer group of a field and 
recall some statements from the literature. The purpose is to reformulate results on the unramified Brauer group in the way we will use them later. 

Let $F$ be a field and let $\Omega_F$ be the set of all $\zz$-valuations on $F$. Let $m\in \nat$ be a positive integer. 
For $v\in\Omega_F$ we have $v(m)=0$ if and only if $\car(\kappa_v)$ does not divide $m$.
We set $$\Omega_F^{(m)}=\{v\in\Omega_F\mid v(m)=0\}\,.$$

Let $v \in \Omega_F^{(m)}$. There is a well-known ramification homomorphism (see \cite[Chapter 10, page 68]{Saltman}) 
$$\partial_v:  \brm {F} \rightarrow \Hom(\gal(\kappa_v), \zz/m\zz) ,$$
where $\mathsf{Gal}(\kappa_v)$ denotes the absolute Galois group of $\kappa_v$, which is endowed with the profinite topology, and where $\Hom(\gal(\kappa_v), \zz/m\zz)$ is the set of all continuous homomorphisms $\gal(\kappa_v) \rightarrow \zz/m\zz$. 
Let $\hat{F}_v$ denote the completion of $F$ with respect to $v$ and let $\hat v$ denote the unique  unramified extension of $v$ to $\hat{F}_v$. 
The map $\partial_v$ factors as
$$\brm {F} \rightarrow \brm {\hat{F}_v}\stackrel{\partial_{\hat v}}\rightarrow \Hom(\gal(\kappa_v), \zz/m\zz),$$
where the first map is the scalar extension homomorphism.
In particular,
$$ \ker ( \brm {F} \rightarrow \brm {\hat{F}_v}) \subseteq \ker(\partial_v)\,.$$

For the definition of $\br{\mc O_v}$, the Azumaya-Brauer group of the discrete valuation ring $\mc O_v$, we refer to \cite[Chapter 3]{Saltman}. Here we only need this group to link two important statements in the literature.
The following statement characterizes the $m$-torsion part $\brm{\mc O_v}$ of $\br{\mc O_v}$ as the kernel of the ramification map~$\partial_v$.

\begin{thm}[Auslander-Brumer]\label{brauerresiduemap}
For $v\in\Omega_F^{(m)}$ the following natural sequence is exact:
$$0\rightarrow \brm {\mc O_v} \rightarrow \brm F \stackrel{\partial_v} \rightarrow \Hom(\gal(\kappa_v), \zz/m\zz) \rightarrow 0  $$
\end{thm}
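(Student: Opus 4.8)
The plan is to reduce the Auslander-Brumer theorem to the structure theory of the Brauer group of a discrete valuation ring, combined with the description of the unramified extension $\hat{F}_v/\hat{F}_v$ and Brauer groups of henselian (in particular complete) discretely valued fields. Since the statement concerns $\partial_v$ which by hypothesis factors through $\brm{\hat{F}_v}$, and since $\br{\mc O_v}\to\br{\hat{\mc O}_v}$ is an isomorphism (Azumaya-Brauer groups only depend on the henselization, hence on the completion), I would first reduce to the case where $v$ is complete (or at least henselian), where $\mc O_v$ is a complete discrete valuation ring.

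Assuming $v$ henselian with valuation ring $\mc O_v$ and residue field $\kappa_v$, the key classical input is the exact sequence computing the Brauer group of such a field: there is a split exact sequence
$$0\rightarrow \br{\kappa_v}\rightarrow \br{F}\rightarrow \Hom(\gal(\kappa_v),\qq/\zz)\rightarrow 0\,,$$
where the first map is inflation along $\mc O_v\to\kappa_v$ (identifying $\br{\mc O_v}\simeq\br{\kappa_v}$ via Azumaya theory, since $\mc O_v$ is henselian), and the second is the residue map sending a class to its associated character of the residue field. This is standard for complete discretely valued fields; for the henselian case one passes to the completion, using that $\br{\mc O_v}=\br{\hat{\mc O}_v}$ and that $\br{F}\to\br{\hat F_v}$ restricted to the image of $\br{\mc O_v}$ is an isomorphism onto $\br{\hat{\mc O}_v}$. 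First I would state this sequence (citing the relevant place in \cite{Saltman}, e.g.\ Chapter~10), then restrict to $m$-torsion. The only subtlety in taking $m$-torsion is checking that the subgroup and the quotient behave well: the $m$-torsion of $\Hom(\gal(\kappa_v),\qq/\zz)$ is exactly $\Hom(\gal(\kappa_v),\frac{1}{m}\zz/\zz)\simeq\Hom(\gal(\kappa_v),\zz/m\zz)$, and the $m$-torsion of $\br{\mc O_v}=\br{\kappa_v}$ is $\brm{\mc O_v}$. Exactness of
$$0\rightarrow \brm{\mc O_v}\rightarrow \brm F\stackrel{\partial_v}\rightarrow \Hom(\gal(\kappa_v),\zz/m\zz)\rightarrow 0$$
then follows from the snake lemma applied to multiplication by $m$ on the three-term sequence, provided one knows $\br{\kappa_v}$ is torsion and that the connecting map issue is handled — but since the original sequence splits, $\brm{F}\simeq\brm{\kappa_v}\oplus\Hom(\gal(\kappa_v),\zz/m\zz)$ and exactness with the claimed maps is immediate once one identifies $\brm{\mc O_v}$ with the image of $\brm{\kappa_v}$.

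The main obstacle I anticipate is not a deep one: it is the bookkeeping of identifying $\brm{\mc O_v}$, as defined via the Azumaya-Brauer group of $\mc O_v$, with $\ker(\partial_v)$ in $\brm F$, i.e.\ ensuring the left-hand map in the sequence is exactly the natural one coming from scalar extension $\mc O_v\hookrightarrow F$ and that it is injective. Injectivity comes from the fact that an Azumaya algebra over a (henselian, hence also in the complete case) discrete valuation ring which is split after inverting the uniformizer is already split over $\mc O_v$ — equivalently $\br{\mc O_v}\to\br{F}$ is injective, which is part of the structure theorem. Surjectivity of $\partial_v$ onto $\Hom(\gal(\kappa_v),\zz/m\zz)$ is supplied by the splitting of the big sequence (explicitly, a character $\chi$ of $\gal(\kappa_v)$ of order dividing $m$ corresponds to a cyclic extension $L/\kappa_v$ with a generator of its Galois group, and one builds a cyclic algebra $(L',\sigma,\pi)$ over $F$ using the unramified lift $L'/F$ and the uniformizer $\pi$; its ramification is $\chi$). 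I would close by remarking that in the form needed later only the exactness — not a new proof — is required, so the statement is really a repackaging of \cite[Chapter 10]{Saltman} together with the Azumaya-theoretic identification $\brm{\mc O_v}\simeq\brm{\kappa_v}$.
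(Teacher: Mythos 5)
Your reduction to the complete case rests on the claim that $\br{\mc O_v}\to\br{\hat{\mc O}_v}$ is an isomorphism because ``Azumaya--Brauer groups only depend on the henselization,'' and this is false for a non-henselian discrete valuation ring; it assumes away precisely the content of the theorem, which is stated for an arbitrary $v\in\Omega_F^{(m)}$. For example, take $F=\qq$, $v$ the $5$-adic valuation, $m=2$: the Hamilton quaternions $(-1,-1)$ form a nonsplit Azumaya algebra over $\mc O_v=\zz_{(5)}$ (nonsplit because nonsplit over $\qq$), whereas $\br{\hat{\mc O}_v}=\br{\zz_5}\simeq\br{\ff_5}=0$; in fact, once the theorem is known, $\brm{\mc O_v}$ is the entire $v$-unramified part of $\brm F$, typically far larger than $\brm{\kappa_v}$. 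The same example shows your later assertion that $\br F\to\br{\hat F_v}$ restricts to an isomorphism from the image of $\br{\mc O_v}$ onto $\br{\hat{\mc O}_v}$ is also wrong (that quaternion class dies in $\br{\qq_5}$). Because of this, the two genuinely nontrivial points of the statement are not addressed: (i) exactness at $\brm F$, i.e.\ that a class with $\partial_v(\alpha)=0$ --- equivalently, one that becomes unramified over $\hat F_v$ --- already extends to an Azumaya algebra over $\mc O_v$ itself, which is a descent statement not implied by the split exact sequence over the complete field; and (ii) surjectivity of $\partial_v$ on $\brm F$: your cyclic-algebra construction uses ``the unramified lift $L'/F$'' of a cyclic residue extension, which exists over $\hat F_v$ (or the henselization) but need not exist over $F$, so you only get surjectivity of $\partial_{\hat v}$ on $\brm{\hat F_v}$.

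For comparison, the paper gives no argument at all: it simply cites Auslander--Brumer \cite[p.~289]{AB}, whose theorem is exactly this exact sequence for an arbitrary (not necessarily complete or henselian) discrete valuation ring, away from the residue characteristic; the nontrivial work there is precisely the passage between $\mc O_v$ and its completion that your sketch treats as formal. Your outline is essentially the standard and correct argument in the special case where $v$ is henselian or complete --- and that case, in the form $\brm{\mc O_{\hat v}}\simeq\brm{\kappa_v}$ via \cite[Corollary 8.5]{AG}, is indeed what the paper uses in \Cref{completionramficationeasylemma} --- but as a proof of the theorem as stated it has a genuine gap at the reduction step.
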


\begin{proof}
The statement follows from \cite[page 289]{AB}.
\end{proof}

We say that \emph{$\alpha \in \brm F$ is unramified at $v$} if $\partial_v(\alpha) =0$.
Consider now an arbitrary subset $\Omega \subseteq \Omega_F^{(m)}$. 
We set
$$\brmn{F, \Omega} = \{\alpha \in \brm F\mid \partial_v(\alpha)=0 \mbox{ for all } v\in \Omega\}.$$ 
We further abbreviate $$\brmn{F} =\brmn{F, \Omega_F^{(m)}}.$$
Obviously, we have that  $\brmn{F}\subseteq \brmn{F, \Omega}$ for any $\Omega\subseteq \Omega_F^{(m)}$. 

\begin{prop}\label{completionramficationeasylemma}
Let $\Omega \subseteq \Omega_F^{(m)}$ be such that $\brm{\kappa_v} =0$ for all $v\in\Omega$.
Then
$$ \brmn{F, \Omega} = \ker\left ( \brm {F} \rightarrow \prod_{v\in \Omega} \brm {\hat{F}_v}\right).$$
\end{prop}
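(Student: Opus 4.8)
The statement to prove is
$$\brmn{F,\Omega}=\ker\Bigl(\brm F\to\prod_{v\in\Omega}\brm{\hat F_v}\Bigr)$$
under the hypothesis that $\brm{\kappa_v}=0$ for all $v\in\Omega$. One inclusion is already essentially free: the text has recorded that $\ker(\brm F\to\brm{\hat F_v})\subseteq\ker(\partial_v)$ for each single $v\in\Omega_F^{(m)}$, so an element killed in every $\brm{\hat F_v}$ (for $v\in\Omega$) is unramified at every $v\in\Omega$, i.e.\ lies in $\brmn{F,\Omega}$. This gives ``$\supseteq$''. The content is the reverse inclusion: if $\alpha\in\brm F$ is unramified at every $v\in\Omega$, then $\alpha_{\hat F_v}=0$ for each such~$v$.

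**Key step.** Fix $v\in\Omega$. By the Auslander--Brumer theorem (\Cref{brauerresiduemap}) applied over $\hat F_v$, whose residue field is again $\kappa_v$, there is an exact sequence
$$0\to\brm{\mc O_{\hat v}}\to\brm{\hat F_v}\xrightarrow{\ \partial_{\hat v}\ }\Hom(\gal(\kappa_v),\zz/m\zz)\to 0\,.$$
Since $\alpha$ is unramified at $v$, its image in $\brm{\hat F_v}$ is killed by $\partial_{\hat v}$ (this is exactly how $\partial_v$ factors, as recalled before \Cref{brauerresiduemap}), hence lies in the image of $\brm{\mc O_{\hat v}}$. Now I invoke the hypothesis $\brm{\kappa_v}=0$: I want to conclude $\brm{\mc O_{\hat v}}=0$. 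For a \emph{henselian} (in particular complete) discrete valuation ring with residue characteristic prime to $m$, reduction modulo the maximal ideal induces an isomorphism $\brm{\mc O_{\hat v}}\xrightarrow{\sim}\brm{\kappa_v}$; this is standard (e.g.\ it is part of the Auslander--Brumer / \cite[Chapter 3]{Saltman} circle of results, and it is what makes the cited \cite[Corollary 7.1.10]{GS}-type statements work). So $\brm{\mc O_{\hat v}}=0$, whence $\brm{\hat F_v}\cong\Hom(\gal(\kappa_v),\zz/m\zz)$ via $\partial_{\hat v}$, and $\alpha_{\hat F_v}=0$. Doing this for every $v\in\Omega$ gives that $\alpha$ lies in the asserted kernel, completing ``$\subseteq$''.

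**Main obstacle.** The only genuine point to be careful about is the identification $\brm{\mc O_{\hat v}}\cong\brm{\kappa_v}$: one must make sure this uses only that $\hat v$ is complete (or henselian) with residue characteristic not dividing $m$, and pin down a precise reference — it is cleanest to combine \Cref{brauerresiduemap} over $\hat F_v$ with the fact that $\brm{\hat F_v}\to\brm{\hat F_v^{nr}}$ has kernel $\brm{\mc O_{\hat v}}$ and that $\gal(\kappa_v)$ computing both the residue side of $\partial_{\hat v}$ and $\brm{\kappa_v}=H^2(\gal(\kappa_v),\cdots)$ forces the vanishing. Everything else — the factorization of $\partial_v$ through the completion, and the easy inclusion — is already in hand from the material above. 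I would write the argument in two short paragraphs: first ``$\supseteq$'' in one line from the displayed inclusion $\ker(\brm F\to\brm{\hat F_v})\subseteq\ker\partial_v$, then ``$\subseteq$'' via the exact sequence over $\hat F_v$ together with $\brm{\mc O_{\hat v}}\cong\brm{\kappa_v}=0$.
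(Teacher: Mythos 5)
Your proof is correct and follows essentially the same route as the paper: both establish that $\ker(\partial_{\hat v}) = \brm{\mc O_{\hat v}} \simeq \brm{\kappa_v} = 0$ via Auslander--Brumer over $\hat F_v$ together with the henselian/complete local isomorphism $\brm{\mc O_{\hat v}}\simeq\brm{\kappa_v}$ (the paper cites \cite[Corollary~8.5]{AG} for this), so that $\ker(\partial_v)=\ker(\brm F\to\brm{\hat F_v})$ for each $v\in\Omega$. The only difference is that you leave the reference for that isomorphism slightly open, which the paper pins down.
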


\begin{proof}
Consider $v \in \Omega$.
By \Cref{brauerresiduemap} and \cite[Corollary 8.5]{AG} we have that
$\ker (\partial_{\hat v}) = \brm {{\mc O}_{\hat v}}\simeq  \brm{\kappa_v}=0$.
Hence $\ker(\partial_v) = \ker ( \brm {F} \rightarrow  \brm {\hat{F}_v})$. 
Using this for all $v\in\Omega$, we obtain the statement.
\end{proof}

\begin{prop}\label{LGPK-group}
Assume that $F$ contains a primitive $m$th root of unity $\omega$.
Let $\Omega\subseteq \Omega_F^{(m)}$ be such that $\brmn{F,\Omega}=0$. Then the homomorphism 
\begin{eqnarray*}
\partial_\Omega: & \K_2^{(m)}F \rightarrow \bigoplus_{v\in \Omega} \K_1^{(m)}\kappa_v, & \alpha \mapsto (\partial_v(\alpha))_{v\in \Omega}
\end{eqnarray*}
is injective.
\end{prop}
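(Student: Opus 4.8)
The plan is to deduce the injectivity of $\partial_\Omega$ from the Merkurjev--Suslin isomorphism $\Phi_\omega\colon\K_2^{(m)}F\to\brm F$ together with the hypothesis $\brmn{F,\Omega}=0$, by showing that under $\Phi_\omega$ the Milnor residue map $\partial_v$ on $\K_2^{(m)}$ is compatible (up to an identification of targets) with the Brauer ramification map $\partial_v$ on $\brm{}$. Concretely, suppose $\alpha\in\K_2^{(m)}F$ satisfies $\partial_v(\alpha)=0$ in $\K_1^{(m)}\kappa_v$ for every $v\in\Omega$. I want to conclude $\Phi_\omega(\alpha)\in\brmn{F,\Omega}$, whence $\Phi_\omega(\alpha)=0$ and so $\alpha=0$ since $\Phi_\omega$ is an isomorphism.

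\textbf{Key steps.} First I would fix $v\in\Omega$ and recall that, since $\omega\in F$ and $v(m)=0$, the residue field $\kappa_v$ contains a primitive $m$th root of unity $\bar\omega$, and $\K_1^{(m)}\kappa_v=\scg{\kappa_v}$. Next I would invoke the known commutativity of the square relating the two residue maps: the Brauer ramification map $\partial_v\colon\brm F\to\Hom(\gal(\kappa_v),\zz/m\zz)$, composed with the identification $\scg{\kappa_v}\xrightarrow{\sim}\Hom(\gal(\kappa_v),\zz/m\zz)$ coming from Kummer theory (using $\bar\omega$), agrees up to sign with $\partial_v\colon\K_2^{(m)}F\to\K_1^{(m)}\kappa_v$ under $\Phi_\omega$ on the source and under the analogous symbol description on $\kappa_v$; on symbols $\{a,b\}$ this is the classical computation, e.g.\ via \cite[Chapter~7]{GS} (the tame symbol, resp.\ the formula displayed after \Cref{Milnorresiduehom}). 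Then $\partial_v(\alpha)=0$ in $\K_1^{(m)}\kappa_v$ forces $\partial_v(\Phi_\omega(\alpha))=0$ in $\Hom(\gal(\kappa_v),\zz/m\zz)$, i.e.\ $\Phi_\omega(\alpha)$ is unramified at $v$. Doing this for all $v\in\Omega$ gives $\Phi_\omega(\alpha)\in\brmn{F,\Omega}=0$, and injectivity of $\Phi_\omega$ yields $\alpha=0$.

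\textbf{Main obstacle.} The step that needs care is the compatibility of the two residue maps and in particular the identification of $\K_1^{(m)}\kappa_v$ with $\Hom(\gal(\kappa_v),\zz/m\zz)$: one must check that the isomorphism $\scg{\kappa_v}\cong\Hom(\gal(\kappa_v),\zz/m\zz)$ is injective on $\K_1^{(m)}\kappa_v$, which is clear, and that the square commutes on all of $\K_2^{(m)}F$, which reduces to symbols and is then a direct computation; alternatively one can bypass the Kummer identification entirely by noting that $\partial_v(\Phi_\omega(\alpha))=0$ is equivalent to $\Phi_\omega(\alpha)$ lying in $\brm{\mc O_v}$ by \Cref{brauerresiduemap}, and that this in turn is detected by the vanishing of the Milnor residue $\partial_v(\alpha)$. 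I would present it via the symbol computation, citing \cite[Chapter~7]{GS} for the commutativity, so that the argument stays short and self-contained.
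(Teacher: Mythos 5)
Your proposal is correct in outline and shares the paper's overall reduction: assume $\partial_v(\alpha)=0$ for all $v\in\Omega$, use injectivity of $\Phi_\omega$ from \Cref{MS} together with $\brmn{F,\Omega}=0$, and reduce everything to showing that $\Phi_\omega(\alpha)$ is unramified at each $v\in\Omega$. Where you diverge is in how that last implication is established. You invoke, as a black box, the compatibility of the Milnor residue $\partial_v:\K_2^{(m)}F\to\K_1^{(m)}\kappa_v$ with the cohomological/Brauer ramification map under the Galois symbol, after identifying $\K_1^{(m)}\kappa_v$ with $\Hom(\gal(\kappa_v),\zz/m\zz)$ via Kummer theory and the residue $\bar\omega$ (which is indeed a primitive $m$th root of unity in $\kappa_v$ since $v(m)=0$). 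That is a legitimate and shorter route, granted the citation and the bookkeeping of twists and signs; note only that the paper's $\partial_v$ on $\brm{F}$ is defined through the completion, so the compatibility should be applied over $\hat F_v$ (harmless, since the Milnor residue is unchanged under the unramified extension $F\subseteq\hat F_v$). The paper instead avoids citing this compatibility and argues by hand: for fixed $v$ it writes $\alpha=\sum_i\{a_i,b_i\}+\{\pi,c\}$ with $a_i,b_i,c\in\mg{\mc O}_v$ and $v(\pi)=1$, reads off $\partial_v(\alpha)=\{\ovl c\}$, uses Hensel's Lemma in $\hat F_v$ to conclude $c\in\hat F_v^{\times m}$ and hence $\{\pi,c\}_{\hat F_v}=0$, and then observes that the remaining symbol algebras $(a_i,b_i)_{\hat F_v,\omega}$ lie in $\brm{\mc O_{\hat v}}=\ker(\partial_{\hat v})$, invoking \Cref{brauerresiduemap}. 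Your fallback option of verifying the commutative square "on symbols" in effect requires exactly this decomposition into unit--unit symbols plus one symbol $\{\pi,c\}$, so the self-contained version of your argument converges to the paper's proof, while the citation-based version buys brevity at the cost of importing the residue-compatibility machinery that the paper deliberately keeps out.
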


\begin{proof}
Let $\alpha\in\ker(\partial_\Omega)$.
By the injectivity of the map $\Phi_\omega:\K_2^{(m)}F \rightarrow \brm{F}$ from \Cref{MS}, in order to show that $\alpha=0$ it suffices to verify that $\Phi_\omega(\alpha)=0$, and since $\brmn{F,\Omega}=0$ it is enough to check that $\partial_v(\Phi_\omega(\alpha))=0$ for all $v\in\Omega$.

Consider $v\in\Omega$ and fix $\pi\in\mg F$ with $v(\pi)=1$.
Then
$\alpha=\sum_{i=1}^r\{a_i,b_i\}+\{\pi,c\}$ with $r\in\nat$ and $a_1,b_1,\dots,a_r,b_r,c\in\mg{\mc{O}_v}$.
Hence $\{\ovl{c}\}=\partial_v(\alpha)=0$ in $\K_1^{(m)}\kappa_v$, whereby $\ovl{c}\in\kappa_v^{\times m}$.
As $v\in\Omega\subseteq \Omega^{(m)}$, $\car(\kappa_v)$ does not divide $m$.
Hence $c\in \hat{F}_v^{\times m}$, by Hensel's Lemma.
Thus $\{\pi, c\} =0$ in $\K_2^{(m)}\hat{F}_v$ and $\Phi_\omega(\alpha_{\hat{F}_v}) = \bigotimes_{i=1}^r[ (a_i,b_i)_{\hat{F}_v,\omega}]\in\brm{\mc{O}_{\hat v}}=\ker(\partial_{\hat v})$. Since $\partial_v$ factors over $\partial_{\hat v}$, we obtain that $\partial_v(\Phi_\omega(\alpha))=0$.
\end{proof}

\section{Function fields of curves with trivial Brauer group} 
\label{S:PAC} 

Let $E$ be a field. 
By an \emph{algebraic function field} over $E$ we mean a finitely generated field extension $F/E$ of transcendence degree $1$.
In this section we outline a strategy to show strong linkage for $\K_2^{(m)}F$ for an algebraic function field $F/E$ and for $m\in\nat$ not divisble by $\car(E)$.
This strategy will be applicable when the base field $E$ satisfies a very strong property, which is given for example when $E$ is a finite field.
\medskip

For a field extension $F/E$ we set $$\Omega_{F/E}=\{v\in\Omega_F\mid v(\mg{E})=0\}\,.$$
We fix a positive integer $m$ which is not a multiple of $\car(E)$ and denote by $\omega$ a primitive $m$th root of unity contained in an algebraic extension of $E$.

\begin{lem}\label{HMtoSL}
Assume that $[E(\omega):E]$ is coprime to $m$.
Let $F/E$ be an algebraic function field. 
Assume that $\brmn{F, \Omega_{F/E}} =0$ and $\brmn{F', \Omega_{F'/E}} =0$ for every finite field extension $F'/F(\omega)$ of degree $m$.
Then the following hold:
\begin{enumerate}[$(a)$]
\item If $\alpha \in \K_2^{(m)}F$ and $f \in \mg F$ is such that $v(f)$ is coprime to $m$ for every $v \in \Omega_{F/E}$ with $\partial_v(\alpha) \neq 0$, then $f$ is a slot of $\alpha$. 
\item  $\K_2^{(m)}F$ is strongly linked.
\end{enumerate}
\end{lem}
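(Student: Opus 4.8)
The plan is to prove parts $(a)$ and $(b)$ in sequence, with $(b)$ following quickly from $(a)$ together with \Cref{C:K-linkage-cyclic-ext}.

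\medskip\noindent\textbf{Proof strategy for $(a)$.} Let $\alpha\in\K_2^{(m)}F$ and let $f\in\mg{F}$ be such that $v(f)$ is coprime to $m$ for every $v\in\Omega_{F/E}$ with $\partial_v(\alpha)\neq 0$. Set $F'=F(\omega)$, which by hypothesis satisfies $[F':F]$ coprime to $m$, and put $L=F'(\sqrt[m]{f})$. By \Cref{firstslotalgKgroupgen}, it suffices to show that $\alpha$ lies in the kernel of $\K_2^{(m)}F\to\K_2^{(m)}L$. Now $L/F$ is a finite extension with $f\in L^{\times m}$, and we want to apply \Cref{LGPK-group} to the field $L$ with respect to the set $\Omega_{L/E}$. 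First I would check that $[F':F]$ coprime to $m$ forces $[L:F']$ to divide $m$ and hence $[L:F]$ to be a product of something coprime to $m$ with something dividing $m$; in any case $L/F(\omega)$ is a subextension of a degree-$m$ extension, so the hypothesis $\brmn{L,\Omega_{L/E}}=0$ applies (possibly after passing to an intermediate field, or one observes directly from \Cref{completionramficationeasylemma} that the vanishing of $\brmn{\,\cdot\,,\Omega_{\cdot/E}}$ only needs $\brm{\kappa_v}=0$ for the relevant residue fields, which is inherited). So by \Cref{LGPK-group}, the map $\partial_\Omega:\K_2^{(m)}L\to\bigoplus_{w\in\Omega_{L/E}}\K_1^{(m)}\kappa_w$ is injective, and it is enough to prove $\partial_w(\alpha_L)=0$ for every $w\in\Omega_{L/E}$.

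\medskip\noindent\textbf{The key local computation.} Fix $w\in\Omega_{L/E}$ and let $v=w|_F\in\Omega_{F/E}$. There are two cases. If $\partial_v(\alpha)=0$, then by functoriality of residues under extensions (cf.~\cite[Remarks~7.1.6]{GS}), $\partial_w(\alpha_L)$ is a multiple of the image of $\partial_v(\alpha)$, hence zero. If $\partial_v(\alpha)\neq 0$, then by hypothesis $v(f)$ is coprime to $m$; since $f\in L^{\times m}$, the ramification index $e=[w(\mg{L}):w(\mg{F})]$ is divisible by $m$, exactly as in the proof of \Cref{L:splitram}, and again $\partial_w(\alpha_L)=e\cdot\partial_v(\alpha)=0$ in $\K_1^{(m)}\kappa_w$. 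Thus $\partial_w(\alpha_L)=0$ for all $w\in\Omega_{L/E}$, so $\alpha_L=0$, i.e.~$\alpha\in\ker(\K_2^{(m)}F\to\K_2^{(m)}L)$, and $(a)$ follows from \Cref{firstslotalgKgroupgen}.

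\medskip\noindent\textbf{Proof of $(b)$.} Given a finite subset $\mc{S}\subseteq\K_2^{(m)}F$, let $\alpha_1,\dots,\alpha_k$ enumerate $\mc{S}$ and let $T\subseteq\Omega_{F/E}$ be the finite set of valuations $v$ at which some $\alpha_j$ ramifies (finiteness holds because each $\alpha_j$ ramifies at only finitely many $v\in\Omega_{F/E}$, a standard fact for algebraic function fields). Using strong approximation / the fact that $\Omega_{F/E}$ contains enough valuations, I would construct $f\in\mg{F}$ with $v(f)=1$ for each $v\in T$; then $v(f)$ is coprime to $m$ for all $v\in T$, so by part $(a)$, $f$ is a simultaneous slot of every $\alpha_j$, i.e.~$\mc{S}\subseteq\{f\}\cdot\K_1^{(m)}F$. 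This exhibits the required symbol $\s=\{f\}\in\K_1^{(m)}F$, so $\K_2^{(m)}F$ is strongly linked.

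\medskip\noindent\textbf{Main obstacle.} The delicate point I expect is justifying the existence of the element $f\in\mg{F}$ with prescribed valuation $1$ at each member of a finite set $T\subseteq\Omega_{F/E}$: for function fields of curves this is a weak approximation statement, but one must be careful that $T$ is genuinely finite (ramification is supported on finitely many places) and that the places in $T$ are independent enough — essentially one needs a model of $F/E$ on which $T$ corresponds to finitely many closed points or prime divisors, and then a rational function vanishing to order exactly $1$ along each. A secondary technical point is confirming that the hypotheses on $\brmn{\,\cdot\,,\Omega_{\cdot/E}}$ transfer correctly to the field $L=F(\omega,\sqrt[m]{f})$, i.e.~that $L/F(\omega)$ really is covered by the ``finite field extension of degree $m$'' clause, which requires tracking how the degree behaves once $\omega$ is adjoined.
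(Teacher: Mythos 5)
Your overall architecture for $(a)$ --- pass to $L=F(\omega,\sqrt[m]{f})$, kill the ramification of $\alpha_L$ at every $w\in\Omega_{L/E}$, invoke \Cref{LGPK-group} to conclude $\alpha_L=0$, and finish with \Cref{firstslotalgKgroupgen} --- is exactly the paper's, and both your local computation and your part $(b)$ (finiteness of the ramification locus plus weak approximation, \cite[Theorem 2.4.1]{EP05}) match the paper's proof. The genuine gap is the point you yourself flag as ``secondary'': securing the input $\brmn{L, \Omega_{L/E}}=0$ for \Cref{LGPK-group}. The hypothesis of the lemma covers only extensions of $F(\omega)$ of degree \emph{exactly} $m$, and neither of your proposed workarounds is valid. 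Vanishing of $\brmn{\,\cdot\,,\Omega_{\cdot/E}}$ is not claimed, and should not be expected, to descend from a degree-$m$ extension to a proper subextension such as $L$; and \Cref{completionramficationeasylemma} does not produce any vanishing at all --- it merely identifies $\brmn{\,\cdot\,,\Omega}$ with the kernel of the localization map $\brm{\cdot}\rightarrow\prod_v\brm{\hat{F}_v}$ when the residue fields have trivial Brauer group. The statement $\brmn{F',\Omega_{F'/E}}=0$ is a substantive local-global hypothesis (Albert--Brauer--Hasse--Noether in \Cref{GFSL}, Efrat's theorem \Cref{Ef} in the PAC case) and cannot be ``inherited'' from triviality of the residue-field Brauer groups.

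The paper closes this gap by a case distinction that your argument is missing. If $\partial_v(\alpha)\neq 0$ for some $v\in\Omega_{F/E}$, then $v(f)$ is coprime to $m$ by assumption; since $\kappa_v\supseteq E$ and $\car(E)$ does not divide $m$, the valuation $v$ is unramified in $F(\omega)$, so every extension of $v$ to $F(\omega)$ still takes a value coprime to $m$ on $f$, and this forces $[F(\omega,\sqrt[m]{f}):F(\omega)]=m$ --- so $L$ is literally a degree-$m$ extension of $F(\omega)$ and the hypothesis applies verbatim (you only noted that the degree \emph{divides} $m$). If instead $\alpha$ is unramified at every $v\in\Omega_{F/E}$ --- a case your argument must also handle, since there $[L:F(\omega)]$ can be anything, even $1$ --- you do not need $L$ at all: the separate hypothesis $\brmn{F,\Omega_{F/E}}=0$, which appears in the statement precisely for this purpose, yields $\alpha=0$, so $f$ is trivially a slot. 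With these two points added, your proof of $(a)$ coincides with the paper's; $(b)$ then goes through as you describe, since the ramification locus of a finite set is finite and distinct $\zz$-valuations are independent, so \cite[Theorem 2.4.1]{EP05} supplies $f$ with $v(f)=1$ there.
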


\begin{proof}
Set $\Omega = \Omega_{F/E}$.

$(a)$  Let $f\in\mg{F}$ and set $S=\{v\in\Omega\mid v(f)\mbox{ coprime to }m\}$.
Let $\alpha \in \K_2^{(m)}F$ be such that $\partial_v(\alpha)=0$ for all $v\in\Omega\setminus S$.
If $S=\emptyset$ then $\alpha\in\brmn{F,\Omega}=0$, and thus $\alpha=\{f,1\}$.
Suppose now that $S\neq \emptyset$ and set $F' = F(\omega,\sqrt[m]{f})$.
Note that $\omega\in F'$.
We claim that $[F':F(\omega)]=m$.
We choose any $v\in S$ and use that $v(f)$ is coprime to $m$ and $v$ is unramified in $F(\omega)$ to conclude that $[F':F(\omega)]=m$.
The hypothesis therefore yields that $\brmn{F' , \Omega_{F'/E}} =0$. 
In order to show that $f$ is a slot of $\alpha$, 
it suffices by \Cref{firstslotalgKgroupgen} to show that $\alpha_{F'} =0$, and since $\brmn{F', \Omega_{F'/E}} =0$ it is further sufficient by \Cref{LGPK-group} to show 
that $\alpha_{F'}$ is unramified with respect to $\Omega'=\Omega_{F'/E}$. 

Consider $w\in \Omega_{F'/E}$.
Then $\mc{O}_w\cap F=\mc{O}_v$ for a unique $v\in \Omega_{F/E}$.
If $\partial_v(\alpha)=0$, then clearly $\partial_w(\alpha_{F'})=0$.
If $\partial_v(\alpha)\neq 0$, 
then $v \in S$, whereby $v(f)$ is coprime to $m$, and as $f\in F'^{\times m}$,  it follows by \Cref{L:splitram} that $\partial_w(\alpha_{F'}) =0$.

$(b)$ Consider an arbitrary finite subset $\mc S \subseteq \K_2^{(m)}F$.
Note that, for any $g\in\mg F$, there are only finitely many valuations $v\in\Omega$ such that $v(g)\neq 0$.
Hence, the subset
$S = \{v\in\Omega\mid \partial_v(\alpha)\neq 0\mbox{ for some }\alpha\in \mc{S}\}$
of $\Omega$ is finite.
By the Weak Approximation Theorem  \cite[Theorem 2.4.1]{EP05}, there exists $f \in \mg F$ such that $v(f) =1$ for all $v\in S$.
Then $f$ is a slot of every element of $\mc{S}$, by $(a)$.
This shows that $\K_2^{(m)}F$ is strongly linked. 
\end{proof}

\begin{ex}\label{GFSL}
Let $E$ be a finite field which contains a primitive $m$th root of unity.
Let $F/E$ be an algebraic function field.
It follows by the Albert-Brauer-Hasse-Noether Theorem (see \cite[Theorem 32.11]{Reiner}) and by \Cref{completionramficationeasylemma} that $\brmn{F} =0$.
Hence it follows by \Cref{HMtoSL} that $\K_2^{(m)}F$ is strongly linked.
\end{ex}

We now apply the same method to show strong linkage for algebraic function fields over PAC-fields. 
Recall that a field $E$ is called \emph{pseudo algebraically closed} or a \emph{PAC-field} if every absolutely irreducible algebraic variety over $E$ has an $E$-rational point.
For a general discussion of these fields we refer to \cite[Chapter 11]{FJ}.
Note that, if $E$ is a PAC-field, then by \cite[Corollary 11.5.5]{FJ} the value group of any valuation on $E$ is divisible, so in particular
$\Omega_E=\emptyset$.

\begin{thm}[Efrat]\label{Ef} 
Let $E$ be a PAC-field such that $\car(E)$ does not divide~$m$. 
Then $\brmn{F} =0$ for any algebraic function field $F/E$.
\end{thm}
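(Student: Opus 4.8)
The plan is to reduce the vanishing of $\brmn{F}$ for an algebraic function field $F$ over a PAC-field $E$ to a local statement at each valuation in $\Omega_{F/E}$, and then to handle the valuations not lying over $E$ (the archimedean-type contributions do not occur here since $\Omega_E=\emptyset$). First I would observe that since $E$ is PAC, every valuation on $E$ is trivial, so $\Omega_F^{(m)}=\Omega_{F/E}$ up to the valuations whose restriction to $E$ is trivial; more precisely every $v\in\Omega_F$ with $v(m)=0$ restricts to a valuation of $E$, which must be trivial, hence $v\in\Omega_{F/E}$. Thus it suffices to control $\partial_v$ for $v\in\Omega_{F/E}$, i.e. for the divisorial valuations coming from closed points of smooth projective models of $F$ over $E$ (or over the algebraic closure of $E$ in $F$). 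For such a $v$, the residue field $\kappa_v$ is itself an algebraic function field in one variable over a finite extension $E'$ of $E$, and $E'$ is again PAC (a finite extension of a PAC-field of characteristic zero, or more generally under the hypotheses here, is PAC by \cite[Chapter~11]{FJ}).

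The key step is then the following: for $\alpha\in\brmn{F}$, the ramification $\partial_v(\alpha)\in\Hom(\gal(\kappa_v),\zz/m\zz)$ vanishes by definition of $\brmn{F}$, so by \Cref{completionramficationeasylemma} (applied with $\Omega=\Omega_F^{(m)}=\Omega_{F/E}$, once we check $\brm{\kappa_v}=0$ for all such $v$) we would get that $\alpha$ lies in the kernel of $\brm F\to\prod_v\brm{\hat F_v}$. To run this argument one needs $\brm{\kappa_v}=0$ for every $v\in\Omega_{F/E}$; since $\kappa_v$ is an algebraic function field over the PAC-field $E'$, this would follow by induction on transcendence degree if we knew that the Brauer group (or at least its $m$-torsion) of a function field of a curve over a PAC-field is trivial — but that is precisely a special case of what we are proving, so the cleanest route is to prove directly that $\br{C}=0$ for a curve $C$ over a PAC-field, or to invoke that the function field of a curve over a PAC-field has cohomological dimension $\leq 1$ together with a PAC-field of char $0$ having trivial Brauer group, and then $\brm{\kappa_v}=0$ is immediate.

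Concretely, the main structural input I would use is: a PAC-field $E$ of characteristic not dividing $m$ has $\brm E=0$ (indeed $\br E=0$ when $E$ is perfect PAC, by \cite[Chapter~11]{FJ}), and for an algebraic function field $F/E$ the natural map $\brm F\to\bigoplus_{v}\Hom(\gal\kappa_v,\zz/m\zz)$ has image equal to the kernel of a sum-of-residues map, with kernel $\brm E$ — this is the analogue of the Faddeev exact sequence for $\mathbb P^1$ over a field with trivial Brauer group, available since $E$ is PAC. Feeding in $\brm E=0$ and the fact that any $\alpha\in\brmn F$ has all residues zero, the Faddeev sequence forces $\alpha=0$. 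For a general curve (not $\mathbb P^1$) one reduces to $\mathbb P^1$ by a finite pushforward/transfer argument, or one works directly with the residue sequence of the smooth projective model using that $\pic^0$ of the curve is a divisible group and $E$ has trivial Brauer group and is PAC (so $H^1(E,\pic^0)$ and the relevant cohomology vanish).

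The main obstacle I anticipate is the circularity: the proof of $\brmn F=0$ for a curve over $E$ appears to need $\brm{\kappa_v}=0$ for the residue fields $\kappa_v$, which are again function fields of curves over (finite extensions of) $E$. This is resolved by arranging the argument so that the residue fields that actually arise are \emph{transcendence-degree-one} function fields over $E'$ and the statement is proved by a clean induction, or — more likely in this paper — by invoking that curves over PAC-fields have function fields of cohomological dimension $\leq 1$, whence $\brm{\kappa_v}=0$ directly, short-circuiting the induction. The remaining work is then purely the Faddeev-type residue computation on the smooth projective model of $F$, which is routine once the cohomological-dimension input is in place.
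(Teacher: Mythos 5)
Your proposal diverges from the paper's proof, which is essentially a citation: by \cite[Corollary 3.2]{Efrat} and \cite[Remark 3.5(a)]{Efrat} the map $\brm F \to \bigoplus_{v\in\Omega_F}\brm{\hat{F}_v}$ is injective, and since every $v\in\Omega_F$ is trivial on $E$ (value groups of valuations on PAC fields are divisible, so $\Omega_E=\emptyset$), each residue field $\kappa_v$ is a \emph{finite} extension of $E$, hence itself PAC with $\brm{\kappa_v}=0$, so \Cref{completionramficationeasylemma} gives $\brmn F=0$. Your attempt to prove the vanishing directly has genuine gaps. First, you misidentify the residue fields: for a $\zz$-valuation $v$ on $F$ trivial on $E$, the Abhyankar inequality forces $\kappa_v$ to be algebraic, indeed finite, over $E$ --- it is not a function field of a curve over a finite extension of $E$. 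The circularity you worry about therefore does not arise, and the fix you propose rests on a false statement: the function field of a curve over a PAC field does \emph{not} in general have cohomological dimension $\le 1$ (for $E$ PAC with $\gal(E)\simeq\hat\zz$, the field $E(t)$ carries many nontrivial, ramified classes in $\brm{E(t)}$, so its cohomological dimension is $2$; only the unramified part vanishes, which is precisely the content of the theorem being proved).

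Second, the global step is not actually carried out. The reduction of a general curve to $\mathbb{P}^1$ by a ``finite pushforward/transfer argument'' does not work: corestriction followed by restriction is multiplication by the degree, so injectivity of a residue map cannot be transported along a finite morphism in this way. The alternative you sketch --- Hochschild--Serre for the smooth projective model, Tsen's theorem over $\bar{E}$, divisibility of $\pic^0$ and vanishing of $H^1(E,\pic^0)$ (triviality of Weil--Ch\^atelet groups over PAC fields) --- is essentially Efrat's own argument, but as written it is only a gesture: you would still need to identify the classes with vanishing residues at all $v\in\Omega_F^{(m)}$ with the Brauer group of the smooth projective model, verify the vanishing of the relevant cohomology of the PAC base, and then recover the statement about $\brmn F$ as defined in the paper (e.g.\ via \Cref{completionramficationeasylemma}). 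Finally, the claim that ``every valuation on $E$ is trivial'' is false as stated --- PAC fields can carry nontrivial valuations; what is true, and what the paper uses via \cite[Corollary 11.5.5]{FJ}, is that their value groups are divisible, whence $\Omega_E=\emptyset$ and $\Omega_F=\Omega_{F/E}$.
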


\begin{proof}
Let $F/E$ be an algebraic function field.
By \cite[Corollary 3.2]{Efrat} and \cite[Remark 3.5(a)]{Efrat}, the natural homomorphism $\brm F \rightarrow \oplus_{v\in \Omega_F} \brm {\hat{F}_v}$ is injective. 
Since $E$ is a PAC-field, we have $\Omega_E=\emptyset$ and
therefore $\Omega_F=\Omega_{F/E}$.
For $v\in\Omega_F$, 
then the residue field $\kappa_v$ is a finite extension of $E$ and hence itself a PAC-field, whereby $\brm{\kappa_v}=0$.
Furthermore, the hypothesis on $E$ yields that 
$\Omega_F=\Omega_F^{(m)}$.
Hence we obtain by \Cref{completionramficationeasylemma} that 
$\brmn{F} =0$. 
\end{proof}

\begin{thm}\label{PACSL} 
Let $E$ be a PAC-field such that $\car(E)$ does not divide~$m$. 
Assume that $[E(\omega):E]$ is coprime to $m$ for a primitive $m$th root of unity $\omega$.
Then $\K_2^{(m)}F$ is strongly linked for every algebraic function field $F/E$.
\end{thm}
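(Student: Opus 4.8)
The plan is to deduce \Cref{PACSL} from \Cref{HMtoSL} by verifying its hypotheses. The statement to prove is that $\K_2^{(m)}F$ is strongly linked for every algebraic function field $F/E$, where $E$ is a PAC-field with $\car(E)\nmid m$ and $[E(\omega):E]$ coprime to $m$. Since \Cref{HMtoSL}$(b)$ gives precisely this conclusion once we know that $[E(\omega):E]$ is coprime to $m$, that $\brmn{F,\Omega_{F/E}}=0$, and that $\brmn{F',\Omega_{F'/E}}=0$ for every finite extension $F'/F(\omega)$ of degree $m$, the whole proof reduces to checking these three conditions.

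First I would dispose of the coprimality hypothesis: it is part of the assumptions of \Cref{PACSL}, so nothing is needed there, except to note that we will want it to hold over $F(\omega)$ as well when we apply the lemma — but \Cref{HMtoSL} only requires coprimality of $[E(\omega):E]$, so this is immediate. Next, I would handle the vanishing of the relevant unramified Brauer groups. The key observation is that, for a PAC-field $E$, \Cref{Ef} already gives $\brmn{F''}=0$ for \emph{any} algebraic function field $F''/E$, and (as recalled from \cite[Corollary 11.5.5]{FJ}) $\Omega_E=\emptyset$, so $\Omega_{F''}=\Omega_{F''/E}$ and hence $\brmn{F'',\Omega_{F''/E}}=\brmn{F''}=0$. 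Applying this with $F''=F$ gives the second hypothesis of \Cref{HMtoSL}. For the third hypothesis, I would take any finite extension $F'/F(\omega)$ of degree $m$ and simply observe that $F'$ is itself an algebraic function field over $E$ — indeed $F(\omega)/E$ is finitely generated of transcendence degree $1$ since $E(\omega)/E$ is algebraic, and $F'/F(\omega)$ is finite, so $F'/E$ is finitely generated of transcendence degree $1$. Hence \Cref{Ef} applies to $F'$ and, combined with $\Omega_{F'}=\Omega_{F'/E}$, yields $\brmn{F',\Omega_{F'/E}}=0$.

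With all three hypotheses of \Cref{HMtoSL} verified, part $(b)$ of that lemma gives directly that $\K_2^{(m)}F$ is strongly linked, completing the proof. The only mild subtlety — and the step I would be most careful about — is the bookkeeping that every field appearing (namely $F$, $F(\omega)$, and each degree-$m$ extension $F'/F(\omega)$) is genuinely an algebraic function field over $E$ and that for each of them $\Omega=\Omega_{\,\cdot\,/E}$; this is where one uses that $\omega$ is algebraic over $E$ and that a PAC-field carries no $\zz$-valuations. No real obstacle remains: \Cref{Ef} and \Cref{HMtoSL} do all the work, and the argument is essentially an assembly of already-established facts.

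\begin{proof}
Let $\omega$ be a primitive $m$th root of unity in an algebraic closure of $E$; by hypothesis $[E(\omega):E]$ is coprime to $m$. Let $F/E$ be an algebraic function field. Since $E(\omega)/E$ is a finite (hence algebraic) extension, $F(\omega)/E$ is finitely generated of transcendence degree $1$, and any finite field extension $F'/F(\omega)$ is then again an algebraic function field over $E$. As $E$ is a PAC-field, by \cite[Corollary 11.5.5]{FJ} we have $\Omega_E=\emptyset$, so for any algebraic function field $F''/E$ every $v\in\Omega_{F''}$ restricts trivially to $\mg{E}$, that is, $\Omega_{F''}=\Omega_{F''/E}$. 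By \Cref{Ef}, $\brmn{F''}=0$ for every such $F''$, and therefore
$$\brmn{F'',\Omega_{F''/E}}=\brmn{F'',\Omega_{F''}}=\brmn{F''}=0\,.$$
Applying this with $F''=F$ gives $\brmn{F,\Omega_{F/E}}=0$, and applying it with $F''=F'$ gives $\brmn{F',\Omega_{F'/E}}=0$ for every finite field extension $F'/F(\omega)$, in particular for those of degree $m$. Thus all hypotheses of \Cref{HMtoSL} are satisfied, and part $(b)$ of that lemma yields that $\K_2^{(m)}F$ is strongly linked.
\end{proof}
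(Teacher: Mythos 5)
Your proof is correct and follows essentially the same route as the paper: both deduce the vanishing hypotheses of \Cref{HMtoSL} from \Cref{Ef} together with the fact that a PAC-field carries no $\zz$-valuations (so $\Omega_{F''}=\Omega_{F''/E}=\Omega_{F''}^{(m)}$ for every algebraic function field $F''/E$), and then invoke \Cref{HMtoSL}$(b)$. Your explicit check that $F(\omega)$ and its degree-$m$ extensions are again algebraic function fields over $E$ is a detail the paper leaves implicit, but otherwise the arguments coincide.
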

\begin{proof}
The hypotheses on $E$ imply that $\Omega_E=\emptyset$ and
 $\Omega_{F'}=\Omega_{F'/E}=\Omega_{F'}^{(m)}$ for every algebraic function field $F'/E$. 
Hence, by \Cref{Ef} we have that
 $\brmn{F',\Omega_{F'/E}}=\brmn{F'} = 0$  for every algebraic function field $F'/E$.
 Therefore the statement follows by \Cref{HMtoSL}.
\end{proof}

\section{Ramification on $2$-dimensional regular local rings} 
\label{S:ram-2dim} 

Let $R$ be a $2$-dimensional regular local ring. 
In particular, $R$ is a unique factorisation domain (see~e.g.~\cite[Theorem~20.3]{Ma-CRT}), the maximal ideal of $R$ is generated by two elements and it is the only non-principal prime ideal of $R$. A pair of generators of the maximal ideal is a called a \emph{parameter system of $R$}. Any element occurring in a parameter system of $R$ is called a \emph{parameter}, and it is necessarily a prime element of $R$.

We denote by $\mc P_R$ the set of all height-one prime ideals of $R$.
Let $\mf p \in \mc P_R$.
The localisation $R_\mfp$ is a discrete valuation ring and its residue field $\kappa_{\mfp} = R_{\mf p}/{\mfp} R_{\mfp}$ is the fraction field of $R/{\mfp}$, which is a  $1$-dimensional noetherian local domain.
For $x \in R\setminus {\mf p}$ let $l_R(R/({\mf p}+ Rx))$ denote the length of the $R$-module $R/({\mf p}+ Rx)$. By \cite[Lemma 7.1.26]{Liu}, the rule $(x+{\mf p}) \mapsto l_R(R/({\mf p}+ Rx))$ for $x \in R\setminus {\mf p}$ defines a group homomorphism $\mult_{\mf p} : \mg{\kappa}_{\mf p} \rightarrow \zz$.

Fix now a positive integer $m$ which is invertible in $R$.
For $\mf p \in \mc P_R$, composition of $\mult_{\mf p}$ with the residue map $\zz\to\zz/m\zz$ determines a homomorphism
$$r_{\mf p}: \K_1^{(m)} \kappa_{\mf p} \rightarrow \zz/m\zz,\,\,\{x\}\mapsto \mult_{\mf p}(x)+m\zz\,.$$
Compiling these maps for all $\mfp\in\mc P_R$, we obtain a group homomorphism
$$ r_R : \bigoplus_{{\mf p}\in \mc P_R}\K_1^{(m)}\kappa_{\mf p}\rightarrow \zz/m\zz,\,\,\, (a_{\mf p})_{{\mf p}\in \mc P_R} \mapsto \sum_{{\mf p}\in \mc P_R} r_{\mf p}(a_{\mf p})\,.$$

Let $F$ denote the fraction field of $R$.
For $\mfp \in \mc P_R$, the $\zz$-valuation $v_\mfp$ on $F$ with $\mc{O}_{v_\mfp}=R_\mfp$ induces a ramification homomorphism 
$\partial_{\mf p}=\partial_{v_\mfp}: \K_2^{(m)}F \rightarrow \K_{1}^{(m)}\kappa_\mfp$.
Compilation of the maps $\partial_\mfp$ for all $\mfp\in  \mc P_R$ yields a homomorphism
$$\partial_R= \bigoplus_{\mfp\in \mc P_R} \partial_\mfp: \K_2^{(m)}F \rightarrow \bigoplus_{\mfp\in \mc P_R}\K_{1}^{(m)}\kappa_\mfp\,.$$

A crucial tool is the following well-known reciprocity law,
 formulated in \cite[Lemma 1.1]{Sal97} in terms of Galois cohomology.
We include a proof, working in $\K_2^{(m)}F$.

\begin{prop}\label{reciprocity2} 
Let $R$ be a $2$-dimensional regular local ring with $m\in\mg{R}$.
The homomorphism $r_R\circ \partial_R: \K_2^{(m)}{F} \rightarrow \zz/m\zz$ is trivial. 
\end{prop}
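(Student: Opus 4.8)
The plan is to reduce the statement to two local computations by using the structure of a $2$-dimensional regular local ring $R$: it is a UFD whose maximal ideal $\mfm$ is generated by a parameter system $\{\pi,\rho\}$, and every height-one prime $\mfp\in\mc P_R$ is principal, say $\mfp=(f_\mfp)$ for a prime element $f_\mfp$. Since $\K_2^{(m)}F$ is generated by symbols $\{a,b\}$ with $a,b\in\mg F$, and since $r_R\circ\partial_R$ is a homomorphism, it suffices to verify that $r_R(\partial_R(\{a,b\}))=0$ for all $a,b\in\mg F$. Writing $a$ and $b$ as units times products of powers of the (finitely many relevant) prime elements, multilinearity of the symbol further reduces us to the case where each of $a,b$ is either a unit of $R$ or one of the primes $f_\mfp$; and the symbol $\{a,b\}$ with both $a,b\in\mg R$ (i.e.\ both units) has $\partial_\mfq=0$ for every $\mfq$, so it contributes nothing. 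Thus only two cases remain.

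First I would treat the \emph{single-prime} case: $a=f$ a prime element of $R$ and $b=u\in\mg R$ a unit, so $\{a,b\}=\{f,u\}$. For $\mfq\in\mc P_R$ with $\mfq\neq(f)$, both $f$ and $u$ are units in $R_\mfq$, so $\partial_\mfq(\{f,u\})=0$. At $\mfp=(f)$ we get $\partial_\mfp(\{f,u\})=\{\bar u\}\in\K_1^{(m)}\kappa_\mfp$, and applying $r_\mfp$ gives $\mult_\mfp(\bar u)+m\zz$. Since $u$ is a unit of $R$, $R/(\mfp+Ru)=R/(\mfp+R)=R/R=0$, hence $l_R(R/(\mfp+Ru))=0$, so $\mult_\mfp(\bar u)=0$ and $r_R(\partial_R(\{f,u\}))=0$. (One must also check the sign factor $(-1)^{v_\mfp(f)v_\mfp(u)}=1$ since $v_\mfp(u)=0$, so no issue arises.)

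The main case, and the one I expect to be the real obstacle, is the \emph{two-prime} case: $a=f$, $b=g$ with $f,g$ distinct prime elements of $R$, $\{a,b\}=\{f,g\}$. Here $\partial_\mfq=0$ for all $\mfq\notin\{(f),(g)\}$, while at $\mfp=(f)$ one gets (up to the Weil sign) $\{\bar g\}$ and at $\mfq=(g)$ one gets $\{\bar f\}$, so $r_R(\partial_R(\{f,g\}))=\mult_\mfp(\bar g)-\mult_\mfq(\bar f)+m\zz$ (the minus coming from the antisymmetry/sign in the residue formula for $\K_2$). The heart of the matter is the identity
$$l_R\bigl(R/(Rf+Rg)\bigr)=\mult_{(f)}(\bar g)=\mult_{(g)}(\bar f),$$
i.e.\ that $l_R(R/(\mfp+Rg))$ computed from the $R_\mfp$-side agrees with the length $l_R(R/(Rf+Rg))$ of the finite-length module $R/(f,g)$, and symmetrically for $R/(\mfq+Rf)$. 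This is a symmetry-of-intersection-multiplicity statement: by \cite[Lemma 7.1.26]{Liu} (and its proof) $\mult_{(f)}$ on $\mg\kappa_{(f)}$ is computed via lengths over $R_{(f)}$, but since $R/(f,g)$ already has finite length as an $R$-module (as $(f,g)$ is $\mfm$-primary, $f,g$ being a regular sequence), localization at $(f)$ does not change it; the same holds at $(g)$. Therefore both contributions equal $l_R(R/(f,g))$ and they cancel modulo $m$. I would carry this out by: (i) noting $\{f,g\}$ with $f,g$ coprime primes has support exactly $\{(f),(g)\}$; (ii) writing down $\partial_{(f)}(\{f,g\})$ and $\partial_{(g)}(\{f,g\})$ with the correct signs from the displayed $\K_2$-residue formula; (iii) invoking the equality of lengths above; (iv) concluding $r_R\circ\partial_R(\{f,g\})=0$. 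Combining the two cases with multilinearity finishes the proof.
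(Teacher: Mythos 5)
Your proposal is correct and follows essentially the same route as the paper's proof: reduce via unique factorization and bilinearity to symbols of the forms (unit,\,unit), (prime,\,unit) and (prime,\,prime), and observe that in the last case the two nonzero contributions are the length $l_R(R/(Rf+Rg))$ with opposite signs — note that with the paper's definition of $\mult_\mfp$ one has $\mult_{(f)}(\bar g)=l_R(R/(Rf+Rg))=\mult_{(g)}(\bar f)$ literally by definition, so the ``symmetry of intersection multiplicity'' you flag as the heart of the matter requires no further argument. The only case your reduction leaves implicit is a repeated prime $\{f,f\}$, which is harmless: either compute $\partial_{(f)}(\{f,f\})=-\{\overline{f^{-1}f}\}=0$ directly, or use $\{f,f\}=\{f,-1\}$ to fall back on your (prime,\,unit) case.
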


\begin{proof} 
We set $\partial=\partial_R$ and $r=r_R$. As $R$ is a unique factorization domain,  $\K_2^{(m)} {F}$ is generated by symbols of the following three types: $\{a,b\},\{\pi, b\}$ and $\{\pi, \delta\}$, where $a,b \in \mg R$ and $\pi, \delta$ are non-associated prime elements of $R$. Thus it suffices to show that $(r\circ \partial) (\alpha) =0$, where $\alpha$ is a symbol of one of these three types.

\underline{Case 1:} If $\alpha =\{a,b\}$ with $a,b \in \mg R$, then $\partial(\alpha)=0$  and hence $(r\circ \partial)(\alpha) =0$. 

\underline{Case 2:} Let $\alpha =\{\pi ,b\}$ with $b \in \mg R$ and a prime element $\pi$ of $R$. Then $\partial _{\mf p}(\alpha) =0$ in $\K_1^{(m)}\kappa_{\mf p}$ for all $\mf p \in \mc P_R \setminus \{R\pi\}$ and $\partial _{R\pi}(\alpha) = \{\ovl{b}\}$ in $\K_1^{(m)}\kappa_{R\pi}$. As $\pi R +bR =R$, we have $l_R(R/(R\pi  +Rb))=0$ and thus $r_{R\pi}(\{\ovl{b}\}) =0$. Hence $(r \circ \partial)( \alpha) =0$. 
 
\underline{Case 3:} Let $\alpha =\{\pi ,\delta\}$ with two non-associated prime elements $\pi$ and $\delta$ of $R$. Then $\partial _{\mf p}(\alpha) =0$ for all $\mf p \in \mc P_R\setminus \{R\pi , R\delta \}$. We further have 
\begin{align*}
(r \circ \partial _{R\pi} )(\alpha) &  =  (r_{R\pi} \circ \partial _{R\pi} )(\alpha)   =r_{R\pi} (\bar{\delta}) = l_R(R/(R\pi+R\delta))\,\mbox{ and }\\
(r \circ \partial _{R\delta} ) (\alpha) & =  (r_{R\delta} \circ \partial _{R\delta})(\alpha) =  r_{R\delta}(\bar{\pi}^{-1}) =-l_R(R/(R\pi+R\delta))\,.
\end{align*}
Thus $(r\circ \partial )(\alpha) =0$ in this case, too.
\end{proof}

\begin{cor}[Saltman]\label{ramification-one-parameter}
Let $R$ be a $2$-dimensional regular local ring with $m\!\in\!\mg{R}$\!\!. 
Let $\alpha \in \K_2^{(m)}{F}$, let $\pi $ be a parameter of $R$ and $\mfp=R\pi$. 
Assume that $\partial_{\mf q}(\alpha) =0 $ for all $\mf q \in \mc P_R\setminus \{ \mfp\}$.
Then $\partial_{\mfp}( \alpha) = \{\bar u\} \mbox { in }  \K_1^{(m)} \kappa_{\mfp}$ for some $u \in \mg R$. 
\end{cor}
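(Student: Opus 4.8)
The plan is to use the reciprocity law of \Cref{reciprocity2} to pin down the residue $\partial_{\mfp}(\alpha)$. Since $\pi$ is a parameter of $R$, we may complete it to a parameter system $\{\pi,\delta\}$ of $R$; in particular $\delta$ is a prime element of $R$ and $R\delta\in\mc P_R$. Because $R$ is a unique factorization domain, the ring $R_{\mfp}/\mfp R_{\mfp}=\kappa_{\mfp}$, the fraction field of $R/R\pi$, is generated as a group modulo $m$-th powers by the residues of units of $R$ together with the residue $\bar\delta$ of $\delta$: indeed $R/R\pi$ is again a regular local domain (of dimension one, hence a discrete valuation ring with uniformizer $\bar\delta$), so $\kappa_{\mfp}^\times/\kappa_{\mfp}^{\times m}$ is generated by $\bar\delta$ and the images of $\mg{(R/R\pi)}^\times$, which in turn are residues of elements of $\mg R$ by Nakayama. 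Hence we may write $\partial_{\mfp}(\alpha)=\{\bar u\}+k\cdot\{\bar\delta\}$ in $\K_1^{(m)}\kappa_{\mfp}$ for some $u\in\mg R$ and some integer $k$, and the task reduces to showing $k\equiv 0\bmod m$.

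To extract $k$, I would apply $r_{\mfp}=r_{R\pi}$ to this expression. Since $u$ is a unit of $R$, the quotient $R/(R\pi+Ru)$ is zero, so $r_{\mfp}(\{\bar u\})=\mult_{\mfp}(\bar u)+m\zz=0$; and $r_{\mfp}(\{\bar\delta\})=\mult_{\mfp}(\bar\delta)+m\zz=l_R(R/(R\pi+R\delta))+m\zz=1+m\zz$, because $\{\pi,\delta\}$ is a parameter system and so $R/(R\pi+R\delta)=R/\mfm_R$ has length one. Therefore $r_{\mfp}(\partial_{\mfp}(\alpha))=k+m\zz$. On the other hand, by hypothesis $\partial_{\mf q}(\alpha)=0$ for all $\mf q\in\mc P_R\setminus\{\mfp\}$, so $r_R(\partial_R(\alpha))=r_{\mfp}(\partial_{\mfp}(\alpha))=k+m\zz$. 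By \Cref{reciprocity2}, $r_R\circ\partial_R$ is trivial, so $k\equiv 0\bmod m$, and hence $\partial_{\mfp}(\alpha)=\{\bar u\}$ in $\K_1^{(m)}\kappa_{\mfp}$.

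The one point that needs a little care — and which I expect to be the main obstacle — is the structural claim that $\partial_{\mfp}(\alpha)$ can be written in the form $\{\bar u\}+k\{\bar\delta\}$ with $u\in\mg R$. This requires knowing that $R/R\pi$ is a discrete valuation ring with uniformizer $\bar\delta$ (which follows from $\{\pi,\delta\}$ being a parameter system of the regular local ring $R$) and the elementary fact that every element of $\kappa_{\mfp}^\times$ is, up to an $m$-th power, of the form $u\delta^k$ with $u$ a unit of the discrete valuation ring $R/R\pi$; lifting such a unit to a unit of $R$ is then immediate since $R\to R/R\pi$ is surjective with kernel contained in $\mfm_R$. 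Everything else is a direct computation with $\mult_{\mfp}$ and the reciprocity law.
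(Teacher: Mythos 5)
Your proposal is correct and follows essentially the same route as the paper's proof: both rely on the fact that $R/R\pi$ is a discrete valuation ring with uniformizer $\bar\delta$ (so that $\partial_{\mfp}(\alpha)$ can be written as $\{\bar u\}+k\{\bar\delta\}$ with $u\in\mg{R}$ after lifting a unit), and then apply the reciprocity law of \Cref{reciprocity2} together with $r_{\mfp}(\{\bar\delta\})=1$ and $r_{\mfp}(\{\bar u\})=0$ to conclude that $m$ divides $k$. The only difference is cosmetic: the paper first writes $\partial_{\mfp}(\alpha)=\{\bar u\}$ with $u\in R\setminus\mfp$ and then factors $\bar u\equiv\bar\delta^{l}\bar b$, whereas you perform this decomposition up front.
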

 
\begin{proof}
 Let $u \in R\setminus \mfp$ be such that $\partial_{\mfp}(\alpha) =\{\bar u\}$ in  $\K_1^{(m)} \kappa_{\mfp}$ and let $\delta \in R$ be such that $(\pi, \delta) $ is a parameter system of $R$. 
Then $\delta+\mfp$ is generates the maximal ideal of the discrete valuation ring $R/\mfp$, so $u \equiv \delta ^l b\bmod \mfp$ for some $b\in \mg R$ and $l \in \nat$.   

By the hypothesis and \Cref{reciprocity2}, we obtain that 
$(r_{\mfp} \circ \partial_{\mfp})(\alpha) =0\mbox{ in }\zz/m\zz\,.$ Hence  
$\ovl{l}=r_{\mfp}(\{\bar {\delta}^l \bar{b}\}) = r_{\mfp}(\{\bar u\}) =  0\mbox{ in }\zz/m\zz\,,$
so $m$ divides $l$.
Hence $\{\ovl{u}\}=\{\bar b\}$ in $\K_1^{(m)}\kappa_{\mfp}$, which shows that we may replace $u$ by $b$ to achieve that $u\in\mg{R}$.
\end{proof}

Given a field $F$ and a subring $R$ of $F$, we 
set $$\Omega_{F/R}=\{v\in\Omega_F\mid R\subseteq \mc{O}_v\}\,.$$
(This extends the notation $\Omega_{F/E}$ for a field extension $F/E$ given in \Cref{S:PAC}.)

We will need the following consequence of a purity result on the Brauer group due to Auslander-Goldman, which is translated here to $K$-groups.

\begin{lem}\label{AGpurity-cor2}
Let $R$ be a $2$-dimensional regular local ring with $m\in\mg{R}$ and let $F$ be its fraction field. Assume that $F$ contains a primitive $m$th root of unity $\omega$.
Let $\alpha\in \mathsf{K}_2^{(m)}F$ be such that $\partial_\mfp(\alpha)=0$ for all $\mfp\in\mc{P}_R$.
Then $\partial_v(\alpha)=0$ holds for all $v\in \Omega_{F/R}$.
\end{lem}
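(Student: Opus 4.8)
The plan is to translate the statement back into the language of the Brauer group, where purity results are available, and then use the Merkurjev-Suslin isomorphism $\Phi_\omega$ to transport the conclusion back to $\K_2^{(m)}F$. First I would set $\beta=\Phi_\omega(\alpha)\in\brm{F}$ and observe, using the compatibility of the residue maps on $\K_2^{(m)}$ with those on $\brm{}$ together with \Cref{brauerresiduemap}, that the hypothesis $\partial_\mfp(\alpha)=0$ for all $\mfp\in\mc P_R$ implies $\beta\in\brm{\mc O_{v_\mfp}}=\brm{R_\mfp}$ for every height-one prime $\mfp$ of $R$. In other words $\beta$ is unramified along every codimension-one point of $\spec R$.

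Next I would invoke the purity theorem for the Brauer group of a regular local ring of dimension $2$ (Auslander-Goldman): the natural map $\br{R}\to\br{F}$ is injective with image exactly $\bigcap_{\mfp\in\mc P_R}\br{R_\mfp}$ (intersection taken inside $\br{F}$). Hence from the previous paragraph we get $\beta\in\brm{R}$, i.e.\ $\beta$ comes from an Azumaya algebra over $R$ itself. Now let $v\in\Omega_{F/R}$ be arbitrary, so $R\subseteq\mc O_v$; since $\mc O_v$ is a valuation ring and $R$ is a $2$-dimensional regular (hence normal) local ring contained in it, $\mc O_v$ dominates a localisation of $R$ at a prime ideal. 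The point is that $\beta\in\brm{R}$ forces $\beta$ to lie in the image of $\brm{\mc O_v}\to\brm{F}$: functoriality of the Brauer group applied to $R\to\mc O_v$ (or, more carefully, to $R\to R_\mfp\to\mc O_v$ where $\mfp$ is the centre of $v$ on $R$, using that $\brm{R}\to\brm{R_\mfp}$ and then $\brm{R_\mfp}\to\brm{\mc O_v}$ are defined) shows $\beta$ is the image of an element of $\brm{\mc O_v}$. By \Cref{brauerresiduemap} applied over $\mc O_v$, or directly by the inclusion $\brm{\mc O_v}\subseteq\ker(\partial_v)$, this gives $\partial_v(\beta)=0$ in $\Hom(\gal(\kappa_v),\zz/m\zz)$. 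Finally, the residue $\partial_v(\alpha)\in\K_1^{(m)}\kappa_v$ is detected by $\partial_v(\beta)$: since $\kappa_v$ contains $\ovl\omega$, the Kummer identification of $\K_1^{(m)}\kappa_v=\scg{\kappa_v}$ with $\Hom(\gal(\kappa_v),\zz/m\zz)$ is compatible with the two ramification maps, so $\partial_v(\beta)=0$ yields $\partial_v(\alpha)=0$.

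The main obstacle I anticipate is the geometric step in the third paragraph: making precise that an arbitrary $\zz$-valuation $v$ with $R\subseteq\mc O_v$ factors appropriately through $\spec R$ so that membership $\beta\in\brm{R}$ really does produce a class over $\mc O_v$. One has to be a little careful because $v$ need not be divisorial on $\spec R$ and its centre $\mfp$ may be the maximal ideal of $R$; but since $R$ is local and $2$-dimensional regular, either $\mfp$ has height $\le 1$ (and then $\mc O_v$ dominates $R_\mfp$, a DVR or a field, and we are in the already-handled divisorial case) or $\mfp$ is maximal, in which case $\beta\in\brm{R}$ maps directly to $\brm{\mc O_v}$ via the local homomorphism $R\to\mc O_v$. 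Either way functoriality of the Azumaya-Brauer group gives what we need, and the rest is the routine compatibility bookkeeping between the $K$-theoretic and cohomological residue maps, which the paper has effectively set up already in Sections \ref{S:Milnor} and \ref{S:Brauer-unramified}.
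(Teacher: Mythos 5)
Your proposal is correct and follows essentially the same route as the paper's proof: transfer $\alpha$ to $\brm{F}$ via $\Phi_\omega$, use Auslander--Goldman purity together with the Auslander--Brumer sequence to identify the unramified classes along $\mc{P}_R$ with $\brm{R}$, and then use $R\subseteq\mc{O}_v$ and functoriality to place the class in $\brm{\mc{O}_v}$, hence in $\ker(\partial_v)$. The extra case analysis you worry about concerning the centre of $v$ on $\spec(R)$ is not needed, since the inclusion $\brm{R}\subseteq\brm{\mc{O}_v}$ already follows directly from $R\subseteq\mc{O}_v$, which is exactly how the paper argues.
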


\begin{proof}
Let $\Omega=\{v_\mfp\mid \mfp\in\mc{P}_R\}$ and note that $\Omega\subseteq \Omega_F^{(m)}$.
By  \cite[Proposition 7.4]{AG}, we have that 
$$\br R=\bigcap_{v\in \Omega} \br{ \mc O_v}\,.$$ 
Hence, for the $m$-torsion part we obtain by \Cref{brauerresiduemap} that
$$\brm{R}=\brmn{F, \Omega}\,.$$

Let $\xi=\Phi_\omega(\alpha)$ where $\Phi_\omega $ is the homomorphism in \Cref{MS}. 
It follows by the hypothesis on $\alpha$  
that $\xi\in\brmn{F, \Omega}=\brm{R}$.
Let $v\in\Omega_{F/R}$.
Since $R\subseteq\mc{O}_v$ it follows that $\brm{R}\subseteq \brm{\mc{O}_v}$.
Hence $\xi\in\brm{\mc{O}_v}$ and therefore $\partial_v(\xi)=0$.
This means that $\partial_v(\alpha)=0$.
 \end{proof}

We need also  the following well-known consequence of the previous statements.

\begin{lem}\label{curvepointram}
Let $R$ be a $2$-dimensional regular local ring with fraction field $F$, maximal ideal $\mfm$ and residue field $\kappa$. 
Assume that $m\in\mg{R}$ and $\kappa^\times = \kappa^{\times m}$. 
Assume that $F$ contains a primitive $m$th root of unity. 
Let $\alpha \in \K_2^{(m)}{F}$. 
Assume that there exists a parameter $\pi$ of $R$ such that $\partial_{\mf q}(\alpha) =0 $ for all $\mf q \in \mc P_R\setminus \{ R\pi\}$. 
Then $\partial_v(\alpha)=0$ for all $v\in\Omega_{F/R}$ with $\mfm_v\cap R=\mfm$. 
\end{lem}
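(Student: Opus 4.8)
The goal is to upgrade \Cref{AGpurity-cor2} from height-one primes of $R$ to \emph{all} $\zz$-valuations $v\in\Omega_{F/R}$ whose center on $R$ is the closed point $\mfm$, using the extra hypotheses $\kappa^\times=\kappa^{\times m}$ and the presence of a ``bad'' parameter $\pi$. The first move is to apply \Cref{ramification-one-parameter}: since $\partial_{\mf q}(\alpha)=0$ for all $\mf q\in\mc P_R\setminus\{R\pi\}$, there is $u\in\mg R$ with $\partial_{R\pi}(\alpha)=\{\bar u\}$ in $\K_1^{(m)}\kappa_{R\pi}$. Now I would like to ``correct'' $\alpha$ to kill this last ramification. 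Concretely, consider $\beta=\alpha-\{\pi,u\}\in\K_2^{(m)}F$. At every $\mf q\neq R\pi$ we still have $\partial_{\mf q}(\beta)=0$ (because $\partial_{\mf q}(\{\pi,u\})=0$, as both $\pi$ and $u$ are units at $R_{\mf q}$ — here one uses that $\pi$ is a prime element, so $v_{\mf q}(\pi)=0$ for $\mf q\neq R\pi$), and at $R\pi$ we get $\partial_{R\pi}(\beta)=\{\bar u\}-\{\bar u\}=0$ using the formula for $\partial_v$ on symbols. Hence $\partial_{\mf p}(\beta)=0$ for \emph{all} $\mf p\in\mc P_R$, so \Cref{AGpurity-cor2} applies and gives $\partial_v(\beta)=0$ for all $v\in\Omega_{F/R}$.

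\textbf{Handling the correction term.} It remains to show that $\partial_v(\{\pi,u\})=0$ for every $v\in\Omega_{F/R}$ with $\mfm_v\cap R=\mfm$; combined with the previous paragraph this yields $\partial_v(\alpha)=0$ for all such $v$. Fix such a $v$. Since $u\in\mg R$ and the center of $v$ is $\mfm$, we have $u\in\mg{\mc O_v}$ and the residue $\ovl u\in\mg{\kappa_v}$ is the image of $\bar u\in\mg\kappa$ under the (injective) map $\kappa\hookrightarrow\kappa_v$ induced by $R/\mfm=\kappa\hookrightarrow\kappa_v$. By hypothesis $\bar u\in\kappa^{\times m}$, hence $\ovl u\in\kappa_v^{\times m}$, i.e.\ $\{\ovl u\}=0$ in $\K_1^{(m)}\kappa_v$. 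From the explicit description of $\partial_v$ on $\K_2$, namely $\partial_v(\{\pi,u\})=(-1)^{v(\pi)v(u)}\{\,\ovl{\pi^{-v(u)}u^{v(\pi)}}\,\}$, and using $v(u)=0$, we get $\partial_v(\{\pi,u\})=\{\ovl{u}^{\,v(\pi)}\}=v(\pi)\cdot\{\ovl u\}=0$ in $\K_1^{(m)}\kappa_v$. This finishes the argument.

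\textbf{Main obstacle.} The only genuinely delicate point is the claim that the residue of $u$ at $v$ lands in $\kappa_v^{\times m}$: one must be careful that ``$\mfm_v\cap R=\mfm$'' really forces $u$ (a unit of $R$) to be a unit of $\mc O_v$ with residue coming from $\kappa$, so that the hypothesis $\kappa^\times=\kappa^{\times m}$ can be invoked. Everything else is a bookkeeping verification with the residue formulas and the fact that $\pi$, being a prime element of the UFD $R$, generates the only height-one prime at which it is non-unit. I do not expect to need any model-theoretic or geometric input beyond \Cref{ramification-one-parameter} and \Cref{AGpurity-cor2}, both already available.
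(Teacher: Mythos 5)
Your argument is correct and coincides with the paper's own proof: both correct $\alpha$ by the symbol $\{\pi,u\}$ obtained from \Cref{ramification-one-parameter}, apply \Cref{AGpurity-cor2} to $\beta=\alpha-\{\pi,u\}$, and then use $\mfm_v\cap R=\mfm$ together with $\kappa^\times=\kappa^{\times m}$ to see that $\partial_v(\{\pi,u\})=v(\pi)\{\bar u\}=0$ in $\K_1^{(m)}\kappa_v$. The point you flag as delicate is handled exactly as in the paper, so no changes are needed.
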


\begin{proof}
Let $\mfp=R\pi$.
Since $\partial_{\mf q}(\alpha) =0 $ for all $\mf q \in \mc P_R\setminus \{\mfp\}$ it follows by \Cref{ramification-one-parameter} that $\partial_{\mfp}(\alpha) =\{\bar u\}$ in  $\K_1^{(m)} \kappa_{\mfp}$ for some $u \in \mg R$. 
Set 
$\beta = \alpha -\{ \pi , u \}$.
Then $\partial_R(\beta) =0$.  By \Cref{AGpurity-cor2}, we have that $\partial_v(\beta) =0$ for every $v \in {\Omega}_{F/R}$.

Consider now a valuation $v\in\Omega_{F/R}$ with $\mfm_v\cap R=\mfm$. 
Then $\kappa\subseteq \kappa_v $.
Since $u\in\mg{R}\subseteq\mg{\mc{O}}_v$ and $\bar u\in \mg{\kappa}=  \kappa^{\times m}  \subseteq {\kappa}^{\times m}_v$, we obtain that
$$\partial_v(\{\pi, u\})  = v(\pi)\{ \bar u \} =0 \mbox{ in $\K_1^{(m)}\kappa_v$} \,,$$ 
whereby
$\partial_v(\alpha) = \partial_v(\beta) + \partial_v(\{\pi, u\}) =0$ in $\K_1^{(m)}\kappa_v$.
\end{proof}

\section{Ramification on surfaces} \label{S:ram-surfaces} 

Our main reference for results and terminology from algebraic geometry is \cite{Liu}.
By a \emph{surface} we mean an integral separated noetherian scheme of dimension~$2$.

Let $\mathscr X$ be a surface and let $F$ be the function field of $ \mathscr X$.
For a point $x$ on $\mathscr{X}$ let $\mc O_{\mathscr{X},x}$ denote the stalk of the structure sheaf $\mc{O}_\mathscr{X}$ at $x$, which is a local domain with fraction field $F$. 
A point $x$ of $\mathscr X$ is \emph{regular} (resp.~\emph{normal}) if the local ring $\mc O_{\mathscr X,x}$ is regular (resp.~integrally closed). 
We say that $\mathscr X$ is \emph{regular} (resp.~\emph{normal}) if $\mathscr X$ is regular (resp.~normal) at every point of $\mathscr X$.
For $i \in \{0,1,2\}$ we denote by $\mathscr X^i$ the set of all points of codimension $i$ of $\mathscr X$.
Note that, if $\mathscr{X}=\spec(R)$ for a $2$-dimensional regular local ring $R$, then $\mathscr{X}^1=\mc{P}_R$.

A \emph{prime divisor on $\mathscr X$} is an irreducible closed subset of $\mathscr X$ of codimension~one, 
and is equal to the closure $\ovl{\{x\}}$ for a unique point $x\in \mathscr X^1$.
Let $\Div (\mathscr{X})$ be the free abelian group generated by the prime divisors of $\mathscr{X}$. 
The elements of $\Div (\mathscr{X})$ are called \emph{divisors on $\mathscr{X}$}.
Let $D \in \Div (\mathscr X)$.  For $x\in {\mathscr X}^1$ we denote by $n_x(D)$ the coefficient of the prime divisor $\ovl{\{x\}}$ in $D$.
We set 
$$ \supp_\mathscr{X} (D) = \{x \in \mathscr{X}^1 \mid  n_x(D) \neq 0\}\,.$$
A prime divisor $D'$ is called a \emph{component of $D$} if $\supp_\mathscr{X} (D') \subseteq \supp_\mathscr{X} (D)$.  
For $x\in \mathscr{X}$, we say that \emph{$D$ contains $x$} or that \emph{$x$ lies on $D$} if there is a component of $D$ that contains~$x$. 
Note that, for $x\in \mathscr X^1$, $\ovl{\{x\}}$ is the only prime divisor on $\mathscr X$ that contains $x$, and hence $x$ lies on $D$ if and only if $n_x(D)\neq 0$.

Assume now that $\mathscr X$ is normal. 
For every $x\in \mathscr X^1$, since $\mc O_{\mathscr X,x}$ is $1$-dimensional and integrally closed, hence it is a discrete valuation ring of $F$, and we denote the corresponding $\zz$-valuation on $F$ by $v_x$ and its residue field by $\kappa_x$.
For $f\in \mg F$ the set $\{x\in\mathscr{X}^1\mid v_x (f) \neq0\}$ is finite (see \cite[Lemma~7.2.5]{Liu}), and we set
$$(f)_{\mathscr X} = \sum_{x\in\mathscr{X}^1} v_x(f) \cdot \ovl{\{x\}} $$
and call this the \emph{principal divisor given by $f$}.

\begin{lem}\label{L:new}
Let $D\in\Div({\mathscr X})$.
Let $S$ be a finite set of regular points of $\mathscr X$ contained in an open affine subscheme of $\mathscr X$.
Then there exists $h\in\mg{F}$ such that no point from $S$ lies on the divisor
$D-(h)_{\mathscr X}$.
\end{lem}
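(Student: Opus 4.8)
The plan is to reduce to a local problem at each point of $S$ and then glue using an approximation argument. First I would recall that $S$ lies in an open affine $U=\spec(A)\subseteq\mathscr{X}$, and that for each $s\in S$ the local ring $\mathcal{O}_{\mathscr{X},s}$ is a $2$-dimensional regular local ring (or lower-dimensional, but the interesting case is codimension $2$), hence a UFD. The divisor $D$ restricted near $s$ is a finite $\zz$-combination of prime divisors, each of which is cut out near $s$ by a single element of $\mathcal{O}_{\mathscr{X},s}$ (using factoriality). Writing these prime elements as fractions of elements of $A$, one sees that there is an element $g_s\in\mg{F}$ whose divisor $(g_s)_{\mathscr{X}}$ agrees with $D$ at every $x\in\mathscr{X}^1$ that specialises to $s$; concretely $g_s$ is a product of the relevant prime elements raised to the coefficients $n_x(D)$. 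Then $D-(g_s)_{\mathscr{X}}$ has empty support among the codimension-one points through $s$, so $s$ does not lie on $D-(g_s)_{\mathscr{X}}$.

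Next I would patch these local solutions together. Since being ``not lying on a divisor'' is an open condition on the point in a suitable sense, and $S$ is finite, the natural tool is prime avoidance / the Chinese Remainder Theorem inside $A$ (after possibly shrinking $U$ to a smaller affine still containing $S$, and after clearing denominators so that all the $g_s$ and the local equations of $D$ lie in $A$ localized away from finitely many primes). One wants a single $h\in\mg{F}$ such that, for every $s\in S$, the ratio $h/g_s$ is a unit in $\mathcal{O}_{\mathscr{X},s}$ — equivalently $v_x(h)=v_x(g_s)=n_x(D)$ for all $x\in\mathscr{X}^1$ specialising to $s$ — so that $D-(h)_{\mathscr{X}}$ still avoids $s$. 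Because the points of $S$ are finitely many and each imposes only a condition on the image of $h$ in the finitely many discrete valuation rings $\mathcal{O}_{\mathscr{X},x}$ with $x\rightsquigarrow s$, this is a finite system of independent congruence conditions, solvable by CRT in $A$ or by weak approximation among the pairwise incomparable valuations $v_x$.

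The main obstacle I expect is the bookkeeping needed to make the local data into global data: one must be careful that the finitely many prime divisors appearing in $D$ near the various points of $S$, together with their local equations, can be simultaneously represented by elements of a single Noetherian ring $A$, and that passing to $h\in\mg{F}$ does not inadvertently introduce new components of $D-(h)_{\mathscr{X}}$ through points of $S$ — i.e.\ that $(h)_{\mathscr{X}}$ has the prescribed valuations at \emph{all} codimension-one points specialising into $S$, not merely at the components of $D$. This is handled by choosing $h$ to have valuation exactly $n_x(D)$ at each such $x$ (including $n_x(D)=0$), which the approximation step delivers. Once $h$ is fixed with these valuations, for each $s\in S$ and each $x\in\mathscr{X}^1$ with $s\in\ovl{\{x\}}$ we get $n_x(D-(h)_{\mathscr{X}})=n_x(D)-v_x(h)=0$, so no component of $D-(h)_{\mathscr{X}}$ passes through $s$, which is exactly the conclusion. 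A cleaner packaging, which I would probably adopt, is to invoke that on the semilocal regular ring $\mathcal{O}_{\mathscr{X},S}:=\bigcap_{s\in S}\mathcal{O}_{\mathscr{X},s}$ (a regular semilocal UFD, since $S$ lies in an affine open) every finite family of prime divisors through $S$ is principal, so $D$ becomes principal, say $D=(h)_{\mathscr{X}}$ locally at $S$; then this same $h\in\mg F$ works globally in the sense required, because outside the semilocalisation the statement only concerns components through $S$.
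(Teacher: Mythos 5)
Your closing ``cleaner packaging'' is essentially the paper's own proof: work in the semilocalisation $A_S$ of $A=\mc O_{\mathscr X}(U)$ at the primes corresponding to the points of $S$, use that a regular semilocal domain is a UFD (the paper cites \cite[Lemma 0.12]{Sal07}), so every divisor on $\spec(A_S)$ is principal; choosing $h$ with $(h)_{\mathscr X}$ mapping to the image of $D$ in $\Div(\spec(A_S))$, the difference $D-(h)_{\mathscr X}$ maps to zero there, and a prime divisor maps to zero in $\Div(\spec(A_S))$ exactly when it contains no point of $S$. That version is correct and matches the paper step for step (your $\bigcap_{s\in S}\mc{O}_{\mathscr X,s}$ is the same ring as the paper's $A_S$, by prime avoidance).

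The route you actually develop in detail, however, has a genuine gap at the patching step. What you need is $v_x(h)=n_x(D)$ for \emph{every} $x\in\mathscr X^1$ whose closure contains a point of $S$, and for a closed point $s$ on a surface there are infinitely many prime divisors through $s$. Weak approximation (or CRT) lets you impose only finitely many valuation conditions --- say at the components of $D$ meeting $S$ --- and with only those imposed nothing prevents $(h)_{\mathscr X}$ from acquiring new zero- or pole-components through points of $S$. This is exactly the failure mode you yourself flag, but the claim that ``the approximation step delivers'' valuations at all such $x$, ``including $n_x(D)=0$,'' is not justified: that is an infinite family of conditions, outside the reach of approximation. The semilocal UFD argument is precisely what closes this hole: writing $h\in A_S$ as a unit of $A_S$ times a product of prime elements corresponding to the components of $D$ through $S$ forces $v_x(h)=0$ at every \emph{other} height-one prime of $A_S$ automatically, with no approximation needed. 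So you should drop the CRT/weak-approximation patching and promote your final remark to the actual proof.
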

\begin{proof}
Let $U$ be an open affine subscheme of $\mathscr X$ containing $S$.
Set $A = \mc O_{\mathscr X}(U)$. 
Let $M$ be the set of the prime ideals of $A$ corresponding to points in $S$. We set $S = A\setminus (\bigcup_{\mfp \in M}\mfp )$. Then $A_S$ is a regular semi-local ring and $F$ is the fraction field  of $A_S$.
By a well-known extension of the Auslander-Buchsbaum Theorem, every regular semi-local ring is a unique factorization domain; see e.g.~\cite[Lemma 0.12]{Sal07}.
Hence every divisor on $\spec(A_S)$ is principal. 
Hence the image of $D$ under the natural map $\Div(\mathscr X) \rightarrow \Div(\spec(A_S))$ is attained by a principal divisor $(h)_\mathscr{X}$, for some $h\in\mg{F}$. 
The divisor $D'=D-(h)_{\mathscr X}$ on ${\mathscr X}$ maps to zero in $\Div(\spec(A_S))$.
For any $x\in \mathscr{X}^1$, the prime divisor $\ovl{\{x\}}$ maps to zero in $\Div(\spec(A_S))$ if and
only if it contains no point from $S$.
Thus no point contained in $S$ lies on $D'$.
\end{proof}

Let $x$ be a regular point of $\mathscr{X}$ and let $D\in \Div(\mathscr X)$.
As a special case of \Cref{L:new}, we obtain that there exists some $f\in\mg{F}$ such that $D-(f)_{\mathscr X}$ does not contain $x$, and we call such an element $f$ a \emph{local equation of  $D$ at $x$}.
 
We call a point $x\in \mathscr{X}^2 $ a \emph{crossing point of $D$} if $x$ lies on more than one component of $D$. 
Let $x \in \mathscr X^2$ be a regular point of $\mathscr X$. 
We say that $D$ has \emph{normal crossing at $x$} if a local equation  of $D$ at $x$ can be given in the form $\pi^i\delta^j$ with a parameter system $(\pi, \delta)$ of $\mc O_{\mathscr{X},x}$ and some $i,j \in \nat$. 
We say that \emph{$D$ has normal crossings on $\mathscr X$} if $D$ has normal crossing at every point $x\in\mathscr{X}^2$.

Let $R$ be a noetherian domain of dimension at most $2$. 
By a surface over $R$ we mean an $R$-scheme $\mathscr X$ which is a surface.

Let $R$ be a henselian local noetherian domain with fraction field $F$. 
Let $\eta: \mathscr{X} \rightarrow \spec (R)$ be a surface over $R$.
We denote by $\mathscr{X}_x$ the $\kappa_x$-scheme $\mathscr{X} \times_{\spec(R)}\spec (\kappa_x)$, called the \emph{fiber of $\eta$ over $x$}.
By \cite[Proposition 3.1.16]{Liu}, the underlying topological space of $\mathscr{X}_x$ is naturally homeomorphic to $\eta^{-1}(x)$. 
In the sequel, we denote by $s$ the closed point of $\spec (R)$. Then $\mathscr{X}_s$ is called the \emph{special fiber of~$\eta$}.

\begin{prop} \label{basicproperties}
Let $R$ be a henselian local noetherian domain of dimension one or two. Let $\mathscr{X}$ be a surface over $R$ whose structure morphism $\eta: \mathscr{X} \rightarrow \spec(R)$ is proper and surjective. Then all closed points of $\mathscr{X}$ lie on $\mathscr{X}_s$. Furthermore, $\dim(\mathscr{X}_s)\leq 1$ and $\mathscr{X}_s$ is connected. In particular, $\mathscr{X}_s$ is either a closed point of $\mathscr X$ or a finite union of $1$-dimensional irreducible subsets of $\mathscr{X}$. 
\end{prop}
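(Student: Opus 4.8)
The plan is to analyze the structure morphism $\eta:\mathscr X\to\spec(R)$ using its properness, together with the hypothesis that $R$ is henselian local of dimension one or two. First I would recall that, since $R$ is henselian, any finite $R$-algebra decomposes as a product of henselian local rings, and more to the point, for a proper morphism $\eta$ the fibre over the closed point $s$ controls the global geometry: I would use \cite[Proposition 3.1.16]{Liu} to identify the underlying space of $\mathscr X_s$ with $\eta^{-1}(s)$, so that it suffices to argue about the scheme-theoretic fibre. The key input is that a proper morphism is closed, hence $\eta(\mathscr X)$ is a closed subset of $\spec(R)$ containing the generic point; by surjectivity $\eta(\mathscr X)=\spec(R)$, and in particular $s\in\eta(\mathscr X)$ so $\mathscr X_s$ is non-empty.

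Next I would show that every closed point of $\mathscr X$ lies on $\mathscr X_s$. If $x$ is a closed point of $\mathscr X$, then $\{x\}$ is closed, so $\eta(\{x\})=\{\eta(x)\}$ is a closed point of $\spec(R)$ (images of closed sets under proper, hence closed, morphisms are closed; and the only closed point of the local scheme $\spec(R)$ is $s$, once we know $\eta(x)$ is closed). Here I should be slightly careful: a priori $\eta(x)$ is just some point, but its closure $\overline{\{\eta(x)\}}$ is the image of the closed set $\overline{\{x\}}=\{x\}$, hence closed, hence $\eta(x)=s$. Thus $x\in\eta^{-1}(s)=\mathscr X_s$. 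This gives the first assertion.

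For the dimension bound, I would argue that $\dim\mathscr X_s\le\dim\mathscr X-1$ whenever $\mathscr X_s$ is a proper closed subset, which it is: the generic point of $\mathscr X$ maps to the generic point of $\spec(R)$ (as $\eta$ is dominant), hence does not lie in $\mathscr X_s$, so $\mathscr X_s\subsetneq\mathscr X$. Since $\mathscr X$ is a surface, $\dim\mathscr X=2$, and a proper closed subset of an irreducible noetherian scheme of dimension $2$ has dimension at most $1$; alternatively one can invoke that the fibre of a morphism to a one- or two-dimensional base over a point of codimension $\ge 1$ drops dimension by Krull's principal ideal reasoning. Then for connectedness I would invoke the appropriate form of Zariski's connectedness theorem (Stein factorization): since $\eta$ is proper and $R$ is henselian local with $\mathscr X$ integral, $\mathscr X_s$ is connected — indeed for a proper surjective morphism from an integral scheme to a henselian local base, the closed fibre is connected because the number of connected components of the fibre equals the number of connected components of $\spec$ of the ring of global sections of $\mathscr O_{\mathscr X}$, which being a finite henselian-local-by-henselian algebra over $R$ has connected spectrum after base change to $\kappa_s$. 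Finally, combining $\dim\mathscr X_s\le 1$, connectedness, and the fact that $\mathscr X_s$ is a closed subscheme of the noetherian scheme $\mathscr X$, it is either a single closed point or a finite union of one-dimensional irreducible closed subsets, which is the last assertion.

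The main obstacle I anticipate is the connectedness of $\mathscr X_s$: this is the point where henselianness of $R$ genuinely enters and where one must invoke a nontrivial theorem (Stein factorization / Zariski's main theorem in the form that for a proper morphism $\eta:\mathscr X\to\spec(R)$ with $\mathscr X$ geometrically connected fibres or with $\eta_*\mathscr O_{\mathscr X}=\mathscr O_{\spec R}$ the fibres are connected). I would take care to state precisely which version is used — likely that $R\to H^0(\mathscr X,\mathscr O_{\mathscr X})$ has henselian local target after the integral-closure step, so that its reduction mod $\mathfrak m_R$ has connected spectrum, forcing $\mathscr X_s$ connected. Everything else (closedness of $\eta$, the dimension count, the dichotomy in the last sentence) is formal.
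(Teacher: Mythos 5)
Your proof is correct and for the most part runs parallel to the paper's: properness sends closed points to closed points of $\spec(R)$, hence onto $s$; surjectivity (together with $\dim R\geq 1$) makes $\mathscr X_s$ a proper closed subset of the irreducible surface $\mathscr X$, giving $\dim \mathscr X_s\leq 1$ (the paper cites \cite[Proposition~2.5.5]{Liu}, you argue via the generic point --- same content); and the final dichotomy is the same formal consequence of $\dim\leq 1$, connectedness and finiteness of the set of irreducible components, which you get from topological noetherianity of a closed subspace while the paper notes that $\mathscr X_s$ is a noetherian scheme of finite type over $\kappa_s$. The genuine difference is the connectedness step: the paper simply cites \cite[Proposition~18.5.19]{EGA} (for a proper scheme over a henselian local ring, connected components of the total space correspond to those of the closed fibre), whereas you propose to re-derive this via Stein factorization. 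That route works, but your intermediate sentence is loosely stated: equating the number of components of $\mathscr X_s$ with that of $\spec$ of $B=H^0(\mathscr X,\mc{O}_{\mathscr X})$ is essentially the henselian statement you are trying to prove, so it cannot serve as the justification. The correct chain is: $B$ is a finite $R$-algebra (finiteness theorem for proper morphisms), a domain since $\mathscr X$ is integral, hence local because a finite algebra over a henselian local ring is a product of local rings; therefore $\spec(B\otimes_R\kappa_s)$ is a one-point space, and Zariski's connectedness theorem applied to $\mathscr X\to\spec(B)$ shows that its preimage $\mathscr X_s$ is connected and nonempty. Spelled out this way your argument is complete; it trades the single EGA citation for the finiteness and connectedness theorems for proper morphisms, which is a fair exchange but no simplification.
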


\begin{proof} 
Since $\eta$ is proper, for any closed point $x$ of $\mathscr{X}$, $\eta(x)$ is closed in $\spec(R)$, whereby $\eta(x) =s$. 
Thus $\mathscr{X}_s$ contains all closed points of $\mathscr{X}$.

By \cite[Lemma 2.3.17]{Liu}, $\spec (\kappa_s) \rightarrow \spec (R)$ is a closed immersion.
It follows by \cite[Proposition 3.2.4~$(a)$ and $(c)$]{Liu} that $\mathscr{X}_s \rightarrow \mathscr{X}$ is of finite type. Since $\mathscr{X}$ is noetherian, so is $\mathscr{X}_s$, by \cite[Exercise 3.2.1]{Liu}.
Hence $\mathscr{X}_s$ has finitely many irreducible components.
As $\eta$ is surjective, we have $\eta^{-1}(s) \neq \mathscr X$. Since $\mathscr{X}$ is irreducible and $\mathscr{X}_s$ is closed in $\mathscr{X}$,  we get by \cite[Proposition 2.5.5]{Liu} that $\dim \mathscr{X}_s \leq 1 < \dim \mathscr{X}$. As $R$ is a henselian local ring and $\eta$ is proper, ${\mathscr{X}}_s$ is connected, by \cite[p.~135, Proposition 18.5.19]{EGA}. If $\dim{\mathscr{X}}_s =0$, then ${\mathscr{X}}_s$ is a closed point on $\mathscr{X}$. If $\dim {\mathscr{X}}_s =1$, then it follows that all irreducible components of ${\mathscr{X}}_s$ are of dimension $1$.
\end{proof}

In the situation of \Cref{basicproperties}, if  $\dim({\mathscr{X}}_s) =1$, then we write
$\supp_{\mathscr{X}}(\mathscr{X}_s)$ for 
 the finite set of points $x \in \mathscr X^1$ for which the closure $\ovl{\{x\}}$ is a component of~$\mathscr X_s$.

\begin{thm}[Colliot-Th\'el\`ene-Ojanguren-Parimala]\label{CTOP}
Let $R$ be a henselian local domain of dimension one or two with $m\in\mg{R}$.
Assume that the residue field of $R$ is either finite or separably closed.
Let $\mathscr X$ be a regular projective surface over $R$ such that $\mathscr X \rightarrow \spec(R)$ is surjective and let $F$ be the function field of $\mathscr X$. 
Then $\brmn{F, \Omega_{\mathscr X}} =0$, where $\Omega_{\mathscr X} =\{ v_x \mid x \in \mathscr X^1\}$. 
 \end{thm}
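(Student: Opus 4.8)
The plan is to deduce the statement from the vanishing of the Brauer group of the scheme $\mathscr X$ — which is the content of \cite{CTOP} — after translating between $\brmn{F,\Omega_{\mathscr X}}$ and $\brm{\mathscr X}$ by a purity argument. By the Auslander--Brumer Theorem (\Cref{brauerresiduemap}) one has $\ker(\partial_{v_x})=\brm{\mc O_{\mathscr X,x}}$ for every $x\in\mathscr X^1$, so that, directly from the definitions, $\brmn{F,\Omega_{\mathscr X}}=\bigcap_{x\in\mathscr X^1}\brm{\mc O_{\mathscr X,x}}$, the intersection being formed inside $\brm{F}$. Since $\mathscr X$ is regular of dimension $2$, the Auslander--Goldman purity theorem (the global counterpart of the statement invoked in the proof of \Cref{AGpurity-cor2}; cf.~\cite{AG}) identifies this intersection with $\brm{\mathscr X}$. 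Hence it suffices to show that $\brm{\mathscr X}=0$.

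Next I would pass to the special fibre. As $R$ is henselian local and $\eta\colon\mathscr X\to\spec(R)$ is proper, the proper base change theorem, together with the henselianity of $R$, yields isomorphisms $H^i_{\et}(\mathscr X,\mu_m)\xrightarrow{\,\sim\,}H^i_{\et}(\mathscr X_s,\mu_m)$ for all $i$, hence, via the Kummer sequence, an isomorphism $\brm{\mathscr X}\simeq\brm{\mathscr X_s}$. Since the Brauer group is insensitive to nilpotents, $\brm{\mathscr X_s}=\brm{(\mathscr X_s)_{\mathrm{red}}}$, and by \Cref{basicproperties} the scheme $(\mathscr X_s)_{\mathrm{red}}$ is a connected, reduced, proper scheme of dimension at most $1$ over the residue field $k=\kappa_s$ of $R$.

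It then remains to show that $\brm{Y}=0$ for $Y$ a connected reduced proper curve (or a point) over a field $k$ that is finite or separably closed. For $Y$ smooth and irreducible over a separably closed $k$ this is Tsen's theorem: by the Kummer sequence on $Y$ it reduces to the surjectivity of the degree map $\pic(Y)/m\to H^2_{\et}(Y,\mu_m)$. The reducible and singular cases are handled by d\'evissage along the normalization (Mayer--Vietoris together with surjectivity of the degree maps on the components). For $k$ finite one argues similarly, invoking in addition $\brm{k}=0$ and global class field theory (Artin--Tate) for the function field of $Y$; in the cases of principal interest here the residue field of $R$ is algebraically closed, so only the separably closed case is needed.

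The main obstacle is the reduction to the special fibre: one must check that the \'etale comparison isomorphism genuinely descends to the Brauer group of Azumaya algebras — which requires controlling the discrepancy between $H^2(-,\mathbb G_m)$, its torsion subgroup, and the Azumaya Brauer group (e.g.\ via de~Jong's theorem for quasi-projective schemes) — and that the d\'evissage for non-smooth, reducible curves is carried out carefully. Both points are treated in \cite{CTOP}, on which we rely.
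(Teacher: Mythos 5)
Your argument is correct and takes essentially the same route as the paper: translate $\brmn{F,\Omega_{\mathscr X}}$ into $\brm{\mathscr X}$ by purity for the regular $2$-dimensional scheme $\mathscr X$ (the paper cites \cite[Theorem~6.1$(b)$]{G2} or \cite[Theorem~1.2]{KC}, the global form of the Auslander--Goldman/Auslander--Brumer statements you invoke), and then conclude $\brm{\mathscr X}=0$ from the henselian base and the low-dimensional special fibre, which the paper gets directly from \cite[Corollaries~1.10 and 1.11]{CTOP}. Your extra sketch of that vanishing (proper base change plus Tsen/class field theory on the reduced special fibre, with the Azumaya-versus-cohomological comparison and the Picard-lifting/d\'evissage subtleties deferred to \cite{CTOP}) merely unpacks the cited result rather than replacing it, so the two proofs coincide in substance.
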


\begin{proof}
By \Cref{basicproperties}, the special fiber of $\mathscr X$ has dimension at most one.
Then $\brmn{F, \Omega_{\mathscr X}} = \brm{\mathscr X}$ (See \cite[Theorem 6.1~$(b)$]{G2} or \cite[Theorem 1.2]{KC}).
Since $\mathscr X$ is regular, we obtain by \cite[Corollary 1.10 and Corollary 1.11]{CTOP} that $\brm{\mathscr X}=\br{\mathscr X} =0$.
\end{proof}

Let $\mathscr{X}$ be a surface. An $\mathscr X$-scheme $\mathscr X'$ is called a \emph{model of $\mathscr X$} if the structure morphism $\mathscr X' \rightarrow \mathscr X$ is birational and proper.

We refer to \cite[Definition 8.2.35]{Liu} for the definition of \emph{excellence} for a ring as well as for a scheme, and also for the fact that for any excellent noetherian commutative ring $R$ the scheme $\spec(R)$ is excellent.
Given an excellent surface the existence of a regular model is ensured by a result due to Lipman. 

\begin{thm}[Lipman]\label{desingularisation} 
Let $\mathscr{X}$ be an excellent surface. 
There exists a regular projective model $\eta:\mathscr{X}'\to \mathscr X$.
Moreover, given an effective divisor $D$ one can choose $\eta:\mathscr{X}'\to \mathscr X$ such that $\eta^{-1}(D)$ has only normal crossings on $\mathscr{X}'$.
\end{thm}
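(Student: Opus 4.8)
The plan is to derive both assertions from Lipman's resolution theorem for excellent two-dimensional schemes, combined with the classical embedded resolution of an effective divisor on a regular surface by successive blow-ups of closed points. The genuinely deep input is Lipman's theorem itself, which I would quote rather than reprove.

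First, by Lipman's desingularization of two-dimensional schemes there is a proper birational morphism $\eta_1\colon\mathscr X_1\to\mathscr X$ with $\mathscr X_1$ regular, realized as a finite composition of normalizations and blow-ups of reduced closed subschemes supported at finitely many closed points. Each normalization occurring here is finite and each blow-up is projective, and a composition of projective morphisms is projective; hence $\eta_1$ exhibits a regular projective model of $\mathscr X$, which gives the first statement.

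For the second statement, I would start from such an $\mathscr X_1$ and carry out embedded resolution of the effective divisor $\eta_1^{-1}(D)$ on the regular surface $\mathscr X_1$: there is a finite sequence $\mathscr X'\to\mathscr X_1$ of blow-ups of closed points such that the reduced total transform of $\eta_1^{-1}(D)$, together with the exceptional locus, has only normal crossings on $\mathscr X'$. The usual argument is an induction on the pair formed by the maximal multiplicity of the divisor at a closed point and the number of points achieving it, using that blowing up a closed point of a regular excellent surface again yields a regular excellent surface and eventually strictly decreases this invariant unless the divisor already has normal crossings there; the termination in this generality is part of Lipman's results (see also \cite[Chapter~9]{Liu} for the case of arithmetic surfaces). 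Then $\eta\colon\mathscr X'\to\mathscr X$ is a regular projective model, and $\eta^{-1}(D)$ has only normal crossings on $\mathscr X'$, since the normal crossing condition at a point depends only on a local equation and $\eta^{-1}(D)$ is set-theoretically the support of the total transform.

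The step I expect to be the main obstacle is the one I would rather cite than reprove: one cannot invoke characteristic-zero embedded resolution, as $\mathscr X$ is only assumed excellent and may be of positive or mixed characteristic, so one must appeal to the two-dimensional resolution theorems valid in that generality, namely Lipman's work and its refinements producing the normal crossings form. Everything else --- stability of properness and projectivity under composition, the inductive bookkeeping for point blow-ups, and the local nature of the normal crossings condition --- is routine.
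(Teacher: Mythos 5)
Your proposal is correct and takes essentially the same route as the paper, whose proof consists of citations: \cite{Lip78} for the existence of a regular model, the explanation in \cite[p.~193]{Lip75} that further point blow-ups on the regular model bring $\eta^{-1}(D)$ into normal crossings, and an EGA result for projectivity. The only cosmetic difference is that you obtain projectivity by composing the projective normalizations and blow-ups in Lipman's algorithm (legitimate here because the base is noetherian, hence quasi-compact), whereas the paper invokes \cite[Corollaire 5.6.2]{GroEGA}.
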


\begin{proof}
See \cite[Theorem]{Lip78} for  the existence of a regular model $\eta:\mathscr{X}'\to \mathscr X$ and  the explanation in \cite[p.~193]{Lip75} for the fact that it can be chosen such that  $\eta^{-1}(D)$ has only normal crossings on $\mathscr{X}'$, for a given divisor $D$ on $\mathscr{X}$. 
Using further \cite[Corollaire 5.6.2]{GroEGA}, one can achieve that $\eta$ is projective.
\end{proof}

The following examples cover the situations where we will apply \Cref{desingularisation}.

\begin{exs}\label{excellentrem}
By \cite[Theorem 8.2.39, Corollary 8.2.40]{Liu}, every complete noetherian local ring as well as every Dedekind domain of characteristic zero is excellent.
Furthermore, by \cite[Theorem 8.2.39 $(c)$]{Liu}, given an excellent noetherian ring $R$, any projective  surface over $\spec(R)$ is excellent.
\end{exs}

\begin{cor}\label{lemmamadetorefer} 
Let $E$ be a field and let $v$ be a discrete valuation on $E$. 
If $\car(E)\neq 0$ then assume that $v$ is complete.
Let $F$ be an algebraic function field over $E$.
Then $F$ is the function field of a regular projective surface $\mathscr X$ over $\mc O_v$ such that $\mathscr X\rightarrow \spec(\mc O_v)$ is surjective.
\end{cor}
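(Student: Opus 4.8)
The plan is to build the desired surface $\mathscr X$ in two stages: first produce an affine model of $F$ over a suitable polynomial ring, take its projective closure, and then invoke Lipman's resolution of singularities (\Cref{desingularisation}) to obtain a regular projective model. Since $F/E$ is an algebraic function field, $F$ is a finite extension of a rational function field $E(t)$; choosing $t$ transcendental over $E$, we have that $F$ is the fraction field of the integral closure $B$ of $\mc O_v[t]$ in $F$. The ring $B$ is a finitely generated $\mc O_v[t]$-algebra (here one uses that $\mc O_v$ is excellent, hence Nagata, so that integral closures in finite field extensions are module-finite; in the characteristic-zero case this is automatic, and in positive characteristic this is where the completeness hypothesis on $v$ enters, via \Cref{excellentrem}). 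Thus $\mathscr X_0 = \spec(B)$ is an affine surface over $\mc O_v$ with function field $F$, and the structure morphism $\mathscr X_0 \to \spec(\mc O_v)$ is surjective because $B$ is an $\mc O_v$-algebra that is faithfully flat over $\mc O_v$ (or simply: $B \cap \mfm_v B \neq B$ since $\mc O_v/\mfm_v$ injects into a residue field of $B$).

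Next I would compactify: embed $\mathscr X_0$ as an open subscheme of a projective $\mc O_v$-surface $\mathscr X_1$, for instance by taking the projective closure of a closed immersion $\mathscr X_0 \hookrightarrow \mathbb A^N_{\mc O_v} \subseteq \mathbb P^N_{\mc O_v}$ followed by taking the closure with the reduced induced structure; this $\mathscr X_1$ is an integral projective $\mc O_v$-scheme of dimension $2$ with function field $F$, and its structure morphism is proper and surjective onto $\spec(\mc O_v)$. By \Cref{excellentrem}, $\mathscr X_1$ is an excellent surface. Applying the first assertion of \Cref{desingularisation} to $\mathscr X_1$ yields a regular projective model $\eta\colon \mathscr X \to \mathscr X_1$; then $\mathscr X$ is a regular projective surface with function field $F$, and the composite $\mathscr X \to \mathscr X_1 \to \spec(\mc O_v)$ is proper (composite of proper morphisms) and surjective (the image is closed, being proper over $\spec(\mc O_v)$, and contains the generic point, hence is everything). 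Finally, $\mathscr X$ is a surface over $\mc O_v$ in the sense fixed in \Cref{S:ram-surfaces}: it is an integral separated noetherian scheme of dimension $2$ equipped with a morphism to $\spec(\mc O_v)$.

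The main obstacle is the finiteness of the integral closure $B$ of $\mc O_v[t]$ in $F$. This requires that $\mc O_v$ (equivalently $\mc O_v[t]$) be a Nagata ring, which holds automatically in characteristic zero since then $\mc O_v$ is a Dedekind domain of characteristic zero and hence excellent by \Cref{excellentrem}, but in positive characteristic a general discrete valuation ring need not be Nagata — this is precisely why the completeness assumption on $v$ is imposed, as a complete discrete valuation ring is excellent and in particular Nagata, again by \Cref{excellentrem}. Once module-finiteness of $B$ is in hand, the remaining steps — forming $\spec(B)$, compactifying inside projective space, verifying excellence of the compactification, and applying Lipman's theorem — are routine, and surjectivity of all the structure morphisms onto $\spec(\mc O_v)$ follows formally from properness together with the observation that the generic point of $\mathscr X$ maps to the generic point of $\spec(\mc O_v)$.
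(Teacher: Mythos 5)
Your proposal is correct in substance, but it builds the initial projective model by a genuinely different route than the paper. The paper starts from the unique normal projective curve over $E$ with function field $F$ (Liu, Proposition 7.3.13) and invokes the first steps of Liu's construction of models over a Dedekind base (Example 10.1.4 and Proposition 10.1.8), i.e.\ a schematic closure in $\pp^n_{\mc O_v}$, to get a projective \emph{flat} surface $\mathscr Y$ over $\mc O_v$ with function field $F$; flatness immediately gives surjectivity, and the characteristic-zero/completeness hypothesis is used exactly once, to make $\mc O_v$ excellent so that $\mathscr Y$ is excellent (\Cref{excellentrem}) and Lipman's theorem (\Cref{desingularisation}) applies. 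You instead take $\spec$ of the integral closure $B$ of $\mc O_v[t]$ in $F$ and then its projective closure; this costs you an extra input, namely that excellent rings are Nagata (universally Japanese) so that $B$ is module-finite over $\mc O_v[t]$ — a standard fact, also in Liu's Section 8.2, but not among those the paper records, and in fact avoidable: any finitely generated $\mc O_v$-subalgebra of $F$ with fraction field $F$ would serve as the affine chart (at the price of a short dimension check for the closure, which you also pass over), and the paper's schematic-closure route sidesteps normalization entirely. From the compactified model onward the two proofs coincide: excellence via \Cref{excellentrem}, then \Cref{desingularisation}, then surjectivity. Two small points to tighten: the statement asks for $\mathscr X\to\spec(\mc O_v)$ to be \emph{projective}, not merely proper — this does hold, being a composite of projective morphisms of noetherian schemes, but say so; and your parenthetical argument that $\mfm_v B\neq B$ is circular as written (it presupposes a prime of $B$ over $\mfm_v$) — the clean justification is lying-over for the integral extension $\mc O_v[t]\subseteq B$ (or the observation that $\pi^{-1}\notin\mc O_v[t]=B\cap E(t)$), though your later argument, that the image of the proper morphism $\mathscr X\to\spec(\mc O_v)$ is closed and contains the generic point, already yields surjectivity without any affine-level claim.
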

\begin{proof}
By \cite[Proposition 7.3.13]{Liu}, there exists a unique normal projective curve $C$ over $E$ with function field $F$. 
Using \cite[Example 10.1.4]{Liu} and the first steps of \cite[Proposition 10.1.8]{Liu}, we obtain a projective flat surface $\mathscr Y $ over $\mc O_v$ with function field $F$. In particular $\mathscr Y\rightarrow \spec(\mc O_v)$ is surjective. 
Note that $\mc{O}_v$ is a local Dedekind ring and either of characteristic zero or complete.
Since $\mathscr Y \rightarrow \spec(\mc O_v)$ is projective, we obtain by \Cref{excellentrem} that $\mathscr Y$ is excellent. 
By \Cref{desingularisation}, there exists a regular projective model $\mathscr X\to \mathscr Y$. 
As  $\mathscr X \rightarrow \mathscr Y$ then is projective and birational, it follows that $\mathscr X\rightarrow \spec(\mc O_v)$ is projective and surjective and that $F$ is the function field of $\mathscr X$. 
\end{proof}

\section{Function fields of surfaces}\label{S:main-result} 

In this section we consider function fields of surfaces over a complete local domain of dimension one or two with algebraically closed residue field. 
Typical examples are the fields $\cc(\!(t)\!)(X)$ and $\cc(\!(X,Y)\!)$ and their finite extensions.
It was shown in \cite{Ford96} and \cite{FS89} that any central simple algebra over such a field is cyclic.
In \cite[Theorem 2.1]{CTOP}, using techniques developed in \cite{Sal97}, a simpler proof of the main result of \cite[Theorem 1.6]{FS89} was given.

Here we will show that these fields are strongly linked in any degree coprime to the residue characteristic. 
We mainly follow the method of \cite[Theorem 2.1]{CTOP}. 

\medskip

Consider a normal surface $\mathscr X$ and let $F$ be the function field of $\mathscr X$.
Let $m$ be a positive integer.
Any $x\in\mathscr{X}^1$ gives rise to a $\zz$-valuation $v_x$ on $F$ and therefore to a ramification homomorphism 
$\partial_x= \partial_{v_x}: \K_2^{(m)}F \rightarrow \K_{1}^{(m)}\kappa_x$.
For $\alpha \in \K_2^{(m)}F$, we set $\supp_{\mathscr{X}}(\alpha) = \{ x \in \mathscr X^{1}\mid \partial_x(\alpha) \neq 0\}$.
We set $$\Omega_{F/\mathscr X} = \{ v \in \Omega_F \mid  {\mc O}_{\mathscr X,x}\subseteq \mc{O}_v \mbox{ for some }x\in {\mathscr X}\} \,.$$

\begin{rem}\label{R:val-cen-X}
Consider $v\in\Omega_{F/\mathscr X}$. Let $x\in\mathscr{X}$ be such that $\mc{O}_{\mathscr{X},x}\subseteq\mc{O}_v$.
Then $\mfm_v\cap\mc{O}_{\mathscr{X},x}$ is a nonzero prime ideal of $\mc{O}_{\mathscr{X},x}$, hence either a height-one prime ideal or equal to $\mfm_{\mathscr{X},x}$. In particular, if $\mfm_v\cap\mc{O}_{\mathscr{X},x}\neq\mfm_{\mathscr{X},x}$, then 
$x\in\mathscr{X}^2$ and $\mc{O}_v$ is the localisation of $\mc{O}_{\mathscr{X},x}$ at $\mfm_v\cap\mc{O}_{\mathscr{X},x}$ and therefore equal to $\mc{O}_{\mathscr{X},x'}$ for some $x'\in\mathscr{X}^1$. Hence, in any case we may choose $x\in\mathscr{X}$ with $\mc{O}_{\mathscr{X},x}\subseteq\mc{O}_v$ in such way that $\mfm_v\cap\mc{O}_{\mathscr{X},x}=\mfm_{\mathscr{X},x}$.
\end{rem}

The following lemma is distilled from the proof of \cite[Theorem 2.1]{CTOP}.

\begin{lem}\label{unramifiedsymbols}
Let $\mathscr X$ be an excellent regular surface and let $F$ be its function field.
Assume that $F$ contains a primitive $m$th root of unity. 
Assume that, for every closed point $x \in \mathscr X$, $\car (\kappa_x)$ does not divide $m$ and 
$\kappa_x^\times = \kappa_x^{\times m}$. 
Let $f \in \mg F$ and let $D_1,D_2\in \Div(\mathscr X)$ be such that $ (f)_{\mathscr X} = D_1 +D_2$ and 
$$\supp_{\mathscr X} (D_1) \cap \supp_{\mathscr X} (D_2) = \emptyset \,.$$ 
Assume that $D_1$ has normal crossings on $\mathscr X$, $n_x(D_1)=1$ for every $x\in \supp_{\mathscr X}(D_1)$ and $D_2$ does not contain any crossing point of $D_1$.
Let $\alpha \in \K_2^{(m)}F$ be such that $\supp_{\mathscr X}(\alpha) \subseteq \supp_{\mathscr X}(D_1)$. 
Then $F(\!\sqrt[m] {f})/F$ splits the ramification of $\alpha$ at every $v \in \Omega_{F/\mathscr X}$.
\end{lem}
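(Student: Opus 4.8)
The goal is to show that for any $v\in\Omega_{F/\mathscr X}$, the extension $F(\sqrt[m]{f})/F$ kills the ramification $\partial_w(\alpha_{F(\sqrt[m]{f})})$ for all $\mathbb Z$-valuations $w$ above $v$. By \Cref{R:val-cen-X} I may fix a point $x\in\mathscr X$ with $\mathcal O_{\mathscr X,x}\subseteq\mathcal O_v$ and $\mfm_v\cap\mathcal O_{\mathscr X,x}=\mfm_{\mathscr X,x}$, so that $v$ is centred at $x$ with centre the closed point of $\mathcal O_{\mathscr X,x}$. The plan is to split into cases according to how $x$ sits relative to the divisor $D_1$, and to reduce everything to a statement about the $2$-dimensional regular local ring $R=\mathcal O_{\mathscr X,x}$ (when $x\in\mathscr X^2$) or a discrete valuation ring (when $x\in\mathscr X^1$), using \Cref{L:splitram}, \Cref{curvepointram}, and \Cref{AGpurity-cor2}.

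\textbf{Case analysis.} First, if $x$ does \emph{not} lie on $D_1$ — equivalently $\partial_{x'}(\alpha)=0$ for every $x'\in\mathscr X^1$ specialising to $x$, since $\supp_{\mathscr X}(\alpha)\subseteq\supp_{\mathscr X}(D_1)$ — then I would like to conclude $\partial_v(\alpha)=0$ directly (so any $w$ over $v$ already has $\partial_w(\alpha_{F(\sqrt[m]{f})})=0$ by \Cref{L:splitram}-type reasoning, or more simply because scalar extension of an unramified class stays unramified at valuations centred further down). For $x\in\mathscr X^1$ this is immediate; for $x\in\mathscr X^2$ it uses \Cref{AGpurity-cor2} applied to $R=\mathcal O_{\mathscr X,x}$: $\partial_\mfp(\alpha)=0$ for all $\mfp\in\mathcal P_R$ forces $\partial_v(\alpha)=0$ for all $v\in\Omega_{F/R}$. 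Second, if $x$ lies on exactly one component of $D_1$, say the one corresponding to $\pi\in\mathscr X^1$, then at $R=\mathcal O_{\mathscr X,x}$ only $\partial_{R\pi}(\alpha)$ can be nonzero, and $\pi$ is a parameter of $R$ because $D_1$ has normal crossings and $n_x(D_1)=1$; moreover $f$ is a local equation of $D_1$ at $x$ up to the factor $D_2$ which avoids $x$, so $v_\mfp(f)=n_\mfp(D_1)=1$ is coprime to $m$. Now \Cref{curvepointram} (whose hypotheses $m\in\mg R$, $\kappa^\times=\kappa^{\times m}$ for the residue field $\kappa$ of $R$, and $F$ containing a primitive $m$th root of unity are exactly what is assumed) gives $\partial_v(\alpha)=0$ for all $v\in\Omega_{F/R}$ with $\mfm_v\cap R=\mfm$. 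Third — the crossing case — if $x$ lies on two components of $D_1$, these are cut out by a parameter system $(\pi,\delta)$ of $R=\mathcal O_{\mathscr X,x}$ (by the normal-crossing hypothesis) and $D_2$ avoids $x$, so $f=\pi\delta\cdot(\text{unit})$ in $R$ and in particular $v_\pi(f)=v_\delta(f)=1$. Here I cannot argue that $\alpha$ itself is unramified; instead I pass to $F(\sqrt[m]{f})$ and argue that its ramification dies. Adjoining $\sqrt[m]{f}=\sqrt[m]{\pi\delta u}$ ramifies along both $\pi$ and $\delta$ with index divisible by $m$, so for any $w$ over $v$ the ramification index over $v_\pi$ or $v_\delta$ is divisible by $m$, and \Cref{L:splitram} applies to the valuations $v_\pi,v_\delta$; the subtlety is covering valuations $w$ whose centre on the model $\operatorname{Spec} R$ is the closed point. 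For those I would blow up or, better, observe that after adjoining $\sqrt[m]{f}$ the normal crossing divisor $D_1$ pulls back to something whose local equation at points over $x$ has all multiplicities divisible by $m$ (since $n_x(D_1)=1$ on each branch and we took the $m$th root of the product $\pi\delta$), forcing $\alpha_{F(\sqrt[m]{f})}$ to become unramified there — this is where the normal-crossing hypothesis on $D_1$ together with multiplicity $1$ is essential.

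\textbf{Main obstacle.} The technically delicate point is the last case: controlling \emph{all} $\mathbb Z$-valuations $w$ of $F(\sqrt[m]{f})$ lying over a valuation $v$ of $F$ centred at a crossing point $x$, including the divisorial valuations of $F(\sqrt[m]{f})$ whose centre on (a model over) $R$ is the closed point and which therefore do not see $v_\pi$ or $v_\delta$ of $F$ directly. The clean way to handle this is to work on a regular model of $F(\sqrt[m]{f})$ dominating $\operatorname{Spec} R$: there, the pullback of the local equation $\pi\delta u$ is an $m$th power times a unit along every component through a point over $x$ (because $D_1$ had multiplicity $1$ and normal crossings, so the branched cover is tame and the ramification multiplicities are multiples of $m$), hence every divisorial $w$ over $v$ satisfies the hypothesis of \Cref{L:splitram} relative to some divisorial valuation of $F$ of value coprime to $m$ lying on $D_1$; combined with \Cref{AGpurity-cor2} over that regular model, every $w\in\Omega_{F(\sqrt[m]{f})/\mathscr X'}$ over $v$ kills $\alpha$. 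Assembling the three cases, $F(\sqrt[m]{f})/F$ splits the ramification of $\alpha$ at every $v\in\Omega_{F/\mathscr X}$, as claimed.
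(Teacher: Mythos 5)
Your reduction via \Cref{R:val-cen-X} to the local rings $R=\mc O_{\mathscr X,x}$ and your first two cases (centre not on $D_1$: purity via \Cref{AGpurity-cor2}; centre on exactly one component: \Cref{curvepointram}, resp.\ \Cref{L:splitram} when $x\in\mathscr X^1$) agree with the paper's proof. The gap is in the crossing-point case, which you rightly single out as the main obstacle but do not close. Your argument there rests on the claim that every divisorial valuation $w$ of $F'=F(\sqrt[m]{f})$ lying over a valuation of $F$ centred at the crossing point $x$ ``satisfies the hypothesis of \Cref{L:splitram} relative to some divisorial valuation of $F$ of value coprime to $m$ lying on $D_1$''. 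This is not so: $w$ restricts to one specific $\zz$-valuation $v'$ of $F$, which is centred at the closed point $x$ rather than along $\pi$ or $\delta$, and $v'(f)$ need not be coprime to $m$; for instance, for $m=2$ and $v'$ the order valuation of the blow-up of $\spec R$ at its closed point, $v'(f)=v'(\pi)+v'(\delta)=2$. The substitute observation that $w(f)\equiv 0\bmod m$ for every valuation $w$ of $F'$ is automatic (since $f\in F'^{\times m}$) and by itself says nothing about $\partial_w(\alpha_{F'})$. (Minor: at a non-crossing point of $D_1$ the divisor $D_2$ is \emph{not} required to avoid $x$; your case 2 is unaffected because disjointness of the supports already gives $v_\mfp(f)=n(D_1)=1$, which you anyway do not need there.)

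What actually makes the crossing case work are two ingredients absent from your sketch of it: the hypothesis $\kappa_x^\times=\kappa_x^{\times m}$ and the reciprocity law \Cref{reciprocity2}. Their necessity is visible on $\alpha=\{\pi,u\}$ with $u\in\mg{R}$, $f=\pi\delta$, $m=2$: for $v'$ the blow-up valuation above, $F'\hat F_{v'}=\hat F_{v'}(\sqrt{\delta/\pi})$ is unramified with residue field $\kappa_{v'}(\sqrt{t})$, $t=\ovl{\delta/\pi}$, and one computes $\partial_w(\alpha_{F'})=\{\bar u\}$ in $\K_1^{(2)}\kappa_w$, which is nonzero whenever $\bar u\notin\kappa_x^{\times 2}$; so an argument for this case that never invokes $\kappa_x^\times=\kappa_x^{\times m}$ cannot be complete. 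The paper instead argues by a decomposition: writing $\partial_{R\pi}(\alpha)=\{\ovl{\nu\delta^{s}}\}$ and $\partial_{R\delta}(\alpha)=\{\ovl{\nu'\pi^{r}}\}$ with $\nu,\nu'\in\mg{R}$, the reciprocity law forces $r+s\equiv 0\bmod m$, so $\beta=\alpha-\{\pi,\nu\}-\{\delta,\nu'\}-s\{\pi,\delta\}$ is unramified on $\spec R$ and $\partial_v(\beta)=0$ by \Cref{AGpurity-cor2}; the symbols $\{\pi,\nu\},\{\delta,\nu'\},\{\pi,\pi\}$ have $\partial_v=0$ by \Cref{curvepointram} (this is where $\kappa_x^\times=\kappa_x^{\times m}$ enters), and $s\{\pi,\delta\}=s\{\pi,f\}-s\{\pi,\pi\}$ dies over $F'$ because $f\in F'^{\times m}$; all of this then passes to every $w$ over $v$ by functoriality of residues. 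Some computation of this kind (or an equivalent explicit analysis of the residues along exceptional divisors of a model of $F'$) is required; the geometric language of regular models, tameness and multiplicities divisible by $m$ does not by itself supply it.
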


\begin{proof}
Let $F'=F(\!\sqrt[m]{f})$.
Let $v\in \Omega_{F/\mathscr X}$.
Consider an arbitrary $\zz$-valuation $w$ on $F'$ with 
$\mc{O}_w\cap F=\mc{O}_v$.
We need to show that $\partial_w(\alpha_{F'})=0$ in $\K_1^{(m)}\kappa_w$.

If $\partial_v(\alpha)=0$ in $\K_1^{(m)}\kappa_v$, then this is obvious. 
We assume now that $\partial_v(\alpha)\neq 0$. 
By \Cref{R:val-cen-X} we may choose $x \in \mathscr X$ such that $\mc O_{\mathscr X,x} \subseteq \mc{O}_v$ and $\mf m_v \cap  \mc O_{\mathscr X,x}= \mf m_{\mathscr X,x}$.

Assume first that $x\in \mathscr X^1$. Then $v=v_x$ and $x\in \supp_{\mathscr{X}}(\alpha)\subseteq\supp_{\mathscr{X}}(D_1)$.
We obtain that $v(f)=v_x(f)=n_x(D_1)=1$. It follows by \Cref{L:splitram}  that $\partial_w(\alpha_{F'}) =0$.

Assume now that $x\in \mathscr X^2$. Set $R = \mc O_{\mathscr X,x}$, whereby $v \in \Omega_{F/R}$.
As $\partial_v(\alpha)\neq 0$, we have $\partial_R(\alpha)\neq 0$, by \Cref{AGpurity-cor2}.
Hence $R$ has a prime element $\pi$ such that $\partial_{R\pi}(\alpha)\neq 0$.
Since $D_1$ has normal crossings on $\mathscr X$ and $\supp_{\mathscr{X}}(\alpha)\subseteq\supp(D_1)$, we obtain that $\pi$ is a parameter of $R$.
Set $\mfp=R\pi$.
Since $\partial_v(\alpha)\neq 0$, it follows by \Cref{curvepointram} that $\partial_{\mf q}(\alpha) \neq 0$ for some $\mf q \in \mc P_R\setminus\{ \mfp \}$. 
In particular, $x$ is a crossing point of $D_1$.
By the hypotheses on $D_1$ and $D_2$, it follows that $x$ does not lie on $D_2$. 
We set $\delta=\pi^{-1}f$.
Since the divisor $D_1$ has normal crossings and all its components have multiplicity $1$, we obtain that $(\pi, \delta)$ is a parameter system of $R$ and $\mf q=R\delta$.

Fix $\lambda , \mu \in R\setminus\{0\}$ such that  $\partial_{\mfp}(\alpha) =\{ \bar \lambda \}$ in $\K_1^{(m)}\kappa_{\mfp}$ and $\partial_{\mf{q}}(\alpha) = \{\bar \mu\}$ in $\K_1^{(m)}\kappa_{\mf q}$. 
Then $R/\mfp$  is a discrete valuation ring whose maximal ideal is generated by $\delta+\mfp$.
Hence there exists $\nu\in\mg{R}$ and $s\in \nat$ such that $\lambda\equiv \nu\delta^s \bmod \mfp$.
Exchanging the roles of $\pi$ and $\delta$, the same argument yields that $\mu \equiv \nu' \pi^r \bmod {\mf q}$ 
for some $\nu'\in\mg{R}$ and $r\in\nat$.
Using \Cref{reciprocity2}, 
we obtain that $r+s \equiv 0 \bmod m\zz$.  
Set  
$$ \beta = \alpha - \{\pi, \nu \}- \{\delta, \nu' \}- s \{ \pi, \delta\}\,.$$
We obtain that $\partial_R(\beta) =0$. 
It follows by \Cref{AGpurity-cor2} that  $\partial_v(\beta)=0$.
Note that the symbols $\{\pi,\nu\}$ and $\{\pi,\pi\}$ lie in $\ker(\partial_{\mfp'} )$ for any $\mf p' \in \mc P_R\setminus\{\mfp \}$. 
Similarly, $\{ \delta , \nu' \}$ lies in $\ker(\partial_{\mfp'})$ for any $\mfp'\in \mc P_R\setminus\{ \mf q \}$. 
We obtain by \Cref{curvepointram} that 
$$\partial_v(\{\pi,\nu\}) =\partial_v(\{\pi,\pi\})=\partial_v(\{ \delta , \nu' \}) =0  \mbox{ in } \K_1^{(m)}\kappa_v\,.$$ 
We conclude that $$\partial_w(\beta_{F'} ) = \partial_w (\{\pi,\nu\}) = \partial_w(\{\pi,\pi\})=\partial_w(\{ \delta , \nu' \}) =0  \mbox{ in } \K_1^{(m)}\kappa_w\,.$$
Since $f = \pi\delta$, we have
$$\{ \pi, \delta\} = \{\pi ,f \} - \{\pi , \pi\}\,,$$
and as $\{\pi ,f \} = 0$ in $\K_2^{(m)}F'$, we conclude that $\partial_w(\{\pi, \delta \}) =0$ in $\K_1^{(m)}\kappa_w$.
This shows that $ \partial_w(\alpha_{F'}) = 0$ in $\K_1^{(m)}\kappa_w$.
\end{proof}

\begin{thm}\label{maintheorem}
Let $R$ be an excellent henselian local domain of dimension  one or two with separably closed residue field of charactersitic not dividing~$m$.
Let $\mathscr X$ be a projective surface over $R$ such that $\mathscr X \rightarrow \spec(R)$ is surjective. 
Let $F$ be the function field of $\mathscr X$.
Then $\K_2^{(m)}F$ is strongly linked.
\end{thm}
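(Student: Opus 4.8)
The strategy is to reduce strong linkage of $\K_2^{(m)}F$ to the triviality of the given symbols over a single Kummer extension, combining resolution of surfaces (\Cref{desingularisation}), the vanishing of the unramified Brauer group (\Cref{CTOP}, which via \Cref{LGPK-group} makes a total ramification map injective), the ramification‑splitting criterion \Cref{unramifiedsymbols}, and the description of the kernel of scalar extension to a Kummer extension (\Cref{firstslotalgKgroupgen}). First I record some preliminaries. As $R$ is henselian local with separably closed residue field $\kappa_s$ of characteristic not dividing $m$, a primitive $m$th root of unity in $\kappa_s$ lifts by Hensel's Lemma to one in $R$; hence $\omega\in F$ and $m\in\mg{R}$. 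If $\mathscr X'$ is any regular projective surface over $R$ with $\mathscr X'\to\spec(R)$ surjective, then by \Cref{basicproperties} every closed point of $\mathscr X'$ lies on the special fibre, so its residue field $\kappa$ is a finite (hence purely inseparable) extension of $\kappa_s$; thus $\kappa$ has the same trivial absolute Galois group as $\kappa_s$, so $\car(\kappa)$ does not divide $m$ and $\kappa^\times=\kappa^{\times m}$ by Kummer theory. Finally, given any finite extension $F'/F$, the normalisation of a projective surface over $R$ with function field $F$ in $F'$ is again a projective surface over $R$ surjecting onto $\spec(R)$ (normalisation being finite, since $R$ is excellent), so by \Cref{desingularisation} and \Cref{excellentrem} the field $F'$ is the function field of a regular projective surface $\mathscr X''$ over $R$ with $\mathscr X''\to\spec(R)$ surjective; for every such $\mathscr X''$ one has $\brmn{F',\Omega_{\mathscr X''}}=0$ by \Cref{CTOP}, and therefore $\partial_{\Omega_{\mathscr X''}}\colon\K_2^{(m)}F'\to\bigoplus_{y\in(\mathscr X'')^1}\K_1^{(m)}\kappa_y$ is injective by \Cref{LGPK-group}.

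Now let $\mc S=\{\alpha_1,\dots,\alpha_n\}$ be a finite subset of $\K_2^{(m)}F$. Using \Cref{desingularisation} I replace $\mathscr X$ by a regular projective model; applying \Cref{desingularisation} once more with the effective divisor formed by the ramification curves of $\mc S$, I may assume that the reduced divisor $D$ with $\supp_{\mathscr X}(D)=\bigcup_i\supp_{\mathscr X}(\alpha_i)$ has normal crossings on $\mathscr X$. Here one uses \Cref{AGpurity-cor2}: blowing up a point not lying on the ramification divisor creates no ramification along its exceptional locus, so after the resolution the combined ramification locus of $\mc S$ is contained in the (normal‑crossing) total transform of the original ramification divisor, hence $D$ is normal crossing. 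Next I construct the common slot. Every component of $D$, being proper over the henselian local ring $R$, meets the special fibre and so contains a closed point of $\mathscr X$; let $S$ be the finite set consisting of these chosen closed points together with all crossing points of $D$. By \Cref{L:new} there is $f\in\mg{F}$ such that no point of $S$ lies on $D-(f)_{\mathscr X}$. Writing $(f)_{\mathscr X}=D_1+D_2$ with $D_1=D$ and $D_2=(f)_{\mathscr X}-D$, the choice of $S$ forces $\supp_{\mathscr X}(D_1)\cap\supp_{\mathscr X}(D_2)=\emptyset$ and that no crossing point of $D_1$ lies on $D_2$, while $D_1$ is reduced and normal crossing and $\supp_{\mathscr X}(\alpha_i)\subseteq\supp_{\mathscr X}(D_1)$ for every $i$.

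Applying \Cref{unramifiedsymbols} to each $\alpha_i$, the extension $F'=F(\sqrt[m]{f})$ splits the ramification of $\alpha_i$ at every $v\in\Omega_{F/\mathscr X}$. Choose, as above, a regular projective surface $\mathscr X'$ over $R$ with function field $F'$ and $\mathscr X'\to\spec(R)$ surjective; it comes with a dominant morphism $\mathscr X'\to\mathscr X$. For $y\in(\mathscr X')^1$ mapping to $x\in\mathscr X$, the valuation $v_y$ restricts on $F$ to a $\zz$-valuation $v$ with $\mc O_v=\mc O_{v_y}\cap F\supseteq\mc O_{\mathscr X,x}$, so $v\in\Omega_{F/\mathscr X}$ and hence $\partial_{v_y}((\alpha_i)_{F'})=0$ in $\K_1^{(m)}\kappa_y$. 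By the injectivity of $\partial_{\Omega_{\mathscr X'}}$ it follows that $(\alpha_i)_{F'}=0$ in $\K_2^{(m)}F'$ for all $i$. Since $\omega\in F$ we have $F'=F(\omega,\sqrt[m]{f})$ and $[F(\omega):F]=1$, so \Cref{firstslotalgKgroupgen} gives $\alpha_i=\{f,b_i\}$ for some $b_i\in\mg{F}$. Thus $\mc S\subseteq\{f\}\cdot\K_1^{(m)}F$, and as $\mc S$ was arbitrary, $\K_2^{(m)}F$ is strongly linked.

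The heart of the matter is already packaged in \Cref{unramifiedsymbols} (distilled from \cite[Theorem~2.1]{CTOP}): the real difficulty is to verify that adjoining $\sqrt[m]{f}$ annihilates all the ramification, in particular at the crossing points of $D_1$, where one must exploit the normal‑crossing shape $f=\pi\delta$ together with the reciprocity law \Cref{reciprocity2} and the purity statement \Cref{AGpurity-cor2}. Granting that lemma, the remaining obstacles in the argument above are geometric bookkeeping: producing a single regular projective model over $R$, still surjecting onto $\spec(R)$, on which the combined ramification of the whole set $\mc S$ is simultaneously reduced and normal crossing; controlling how ramification behaves under the resolution; and checking that every codimension‑one valuation on a regular model of $F'$ restricts into $\Omega_{F/\mathscr X}$, so that the conclusion of \Cref{unramifiedsymbols} can be applied at all of them at once.
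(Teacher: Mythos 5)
Your proof is correct and follows essentially the same route as the paper: pass to a regular projective model on which the combined ramification divisor of $\mc S$ has normal crossings, use \Cref{L:new} to produce $f$ whose divisor is that reduced divisor plus a residual part missing its components and crossing points, split the ramification by \Cref{unramifiedsymbols}, and conclude on a regular projective model of the normalization in $F(\sqrt[m]{f})$ via \Cref{CTOP}, \Cref{LGPK-group} and \Cref{firstslotalgKgroupgen}. The only visible deviations are immaterial: you omit the components of the special fibre from the divisor $D$ (the paper includes them but its argument never uses this), and you merge the paper's weak-approximation-plus-\Cref{L:new} construction of $f=gh^{-1}$ into a single application of \Cref{L:new}, for which you should just note that the finite set $S$ lies in an affine open because the model is projective over $R$.
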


\begin{proof} 
The hypothesis implies that $R$ contains a primitive $m$th root of unity.

Let $\mc S \subseteq \K_2^{(m)}F$ be a finite subset. 
To show that $\K_2^{(m)}F$ is strongly linked, we need to show that all elements $\mc{S}$ have a common slot.

Since $\mathscr X \rightarrow \spec(R)$ is projective and since $R$ is excellent, 
it follows from \cite[Theorem 8.2.39 (c)]{Liu} that $\mathscr X$ is excellent as well.
We set 
$$\supp_\mathscr{X} ({\mc S}) = \bigcup_{\alpha\in {\mc S}} \supp_\mathscr{X} (\alpha)\,.$$ 
By \Cref{basicproperties}, $\supp_\mathscr{X}(\mathscr{X}_s)$ is finite. 
Hence $Z = \supp_\mathscr{X}(\mc S) \cup \supp_{\mathscr{X}}(\mathscr{X}_s) $ is a finite set.
By \Cref{desingularisation},  there exists a regular projective model $\eta:\mathscr X'\to\mathscr X$ such that $\eta^{-1}\left(\sum_{z\in Z}\ovl{\{z\}}\right)$ has normal crossings on $\mathscr {X}'$. 

Let $Z' =\supp_\mathscr{X'}(\mc S) \cup \supp_\mathscr{X'}(\mathscr{X}'_s) $ and  
set 
$$D = \sum_{x \in Z'} \ovl{\{x\}}\,.$$ 
By the choice of $\eta:{\mathscr{X}'}\to {\mathscr{X}}$ the divisor $D$ on ${\mathscr{X}'}$ has normal crossings.

We fix  a finite set $\mathscr P$ of closed points of $\mathscr{X}'$ containing all crossing points of $D$ and at least one point on each component of $D$. By the Approximation Theorem \cite[2.4.1]{EP05}, there exists $g \in \mg F$ such that $v_{x}(g) =n_x(D)$ for every $x \in \supp_{\mathscr{X}'}(D)$. Let $D''=(g)_{\mathscr{X}'} - D$.
Since $\mathscr{X}' \rightarrow \spec (R)$ is projective, by \cite[Proposition 3.3.36~$(b)$]{Liu} there exists an affine open subset $U$ of ${\mathscr{X}'}$ containing~$\mathscr P$.
By \Cref{L:new},
 there exists $h\in\mg{F}$ such that no point in $\mathscr P$ is contained in $D''-(h)_{\mathscr{X}'}$.
Set $D'=D''- (h)_{\mathscr{X}'}$ and $f= gh^{-1}\in\mg{F}$. 
Then $D+D'=(f)_{\mathscr{X}'}$ and  $\supp_{\mathscr{X}'}(D') ~\cap~ \supp_{\mathscr{X}'}(D) = \emptyset$.

We claim that $f$ is a slot of every element of $\mc S$.
Consider $\alpha\in\mc S$.
Since $\supp_{\mathscr X'}(\alpha) \subseteq \supp_{\mathscr X'}(D)$,
it follows by \Cref{unramifiedsymbols} that $F(\sqrt[m]{f})$ splits the ramification of $\alpha$ over $\Omega_{F/\mathscr X'}$.
Let $\mathscr Y$ be the normalization of $\mathscr X'$ in $F(\sqrt[m]{f})$.  
Since $F(\sqrt[m]{f})/F$ is a finite separable field extension, 
it follows by \cite[Proposition 4.1.25]{Liu} that $\mathscr Y \rightarrow \mathscr X'$ is a finite morphism, 
in particular it is projective.
Hence $\mathscr Y$ is excellent, by \cite[Theorem 8.2.39 $(c)$]{Liu}. 
By \Cref{desingularisation}, there exists a regular projective model $\mathscr{Y'}\to\mathscr Y$.  
Since restriction of a valuation in $\Omega_{F(\sqrt[m]{f})/\mathscr Y'}$ on $F$ is equivalent to a valuation in $\Omega_{F/\mathscr X}$, it follows by \Cref{unramifiedsymbols} that $\partial_{v_y}(\alpha_{F(\sqrt[m]{f})}) =0$ for all $y \in \mathscr Y'^{1}$ .
Since $\mathscr Y' \rightarrow  \mathscr X'$ and $\mathscr X' \rightarrow \spec(R)$ are projective and surjective, we have that $\mathscr Y'$ is a regular projective surface over $R$ such that $\mathscr Y' \rightarrow \spec(R)$ is surjective.
Thus by \Cref{CTOP}, we have that $\brmn{F(\sqrt[m]{f}), \Omega_{\mathscr Y'}} =0$.
It follows by \Cref{LGPK-group} that $\alpha_{F(\sqrt[m]{f})} =0$. 
We conclude that $f$ is a slot of $\alpha$, by \Cref{firstslotalgKgroupgen}. 
\end{proof}

\begin{cor}\label{arithmeticcase}
Let $E$ be the fraction field of an excellent henselian discrete valuation ring with separably closed residue field of characteristic not dividing $m$. 
Let $F/E$ be an algebraic function field. 
Then $\K_2^{(m)}F$ is strongly linked.
\end{cor}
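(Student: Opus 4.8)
The plan is to deduce the statement directly from \Cref{maintheorem}: it suffices to realise $F$ as the function field of a projective surface over a suitable base ring whose structure morphism is surjective. Let $\mathcal{O}$ be the excellent henselian discrete valuation ring with fraction field $E$. Then $\mathcal{O}$ is a one-dimensional excellent henselian local domain whose residue field is separably closed of characteristic not dividing $m$, so $\mathcal{O}$ is a legitimate choice for the ring $R$ in \Cref{maintheorem}.

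Next I would construct a projective surface $\mathscr X$ over $\mathcal O$ with function field $F$ such that $\mathscr X\to\spec(\mathcal O)$ is surjective. This can be produced exactly as in the first part of the proof of \Cref{lemmamadetorefer}: there is a normal projective curve over $E$ with function field $F$, and spreading it out over $\mathcal O$ as in \cite[Example 10.1.4, Proposition 10.1.8]{Liu} yields a projective flat surface $\mathscr X$ over $\mathcal O$ with function field $F$; flatness (together with the fact that the generic fiber is non-empty) forces $\mathscr X\to\spec(\mathcal O)$ to be surjective. I would emphasise that, in contrast with \Cref{lemmamadetorefer}, I do not need $\mathscr X$ to be regular: \Cref{maintheorem} only requires a projective surface over $R$, and the resolution of singularities (Lipman's theorem) is invoked internally in its proof. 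Consequently this step uses neither a completeness nor a characteristic-zero hypothesis and goes through over the given henselian valuation ring as it stands.

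With such an $\mathscr X$ in hand, \Cref{maintheorem} applies and gives that $\K_2^{(m)}F$ is strongly linked, as asserted. There is essentially no obstacle here; the only point to watch is that one should not simply quote \Cref{lemmamadetorefer} in positive characteristic, since that corollary assumes the valuation to be complete whereas here it is merely henselian — passing through the projective flat surface directly, rather than through a regular model, avoids this issue entirely.
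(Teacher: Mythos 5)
Your proof is correct and lands where the paper does, namely on \Cref{maintheorem}, but it takes a slightly different route there. The paper's own proof quotes \Cref{lemmamadetorefer} to obtain a \emph{regular} projective surface over the valuation ring with surjective structure morphism; since that corollary assumes completeness when $\car(E)\neq 0$, the paper first inserts the claim that excellence of the henselian valuation ring forces it to be complete or of characteristic zero, and only then applies \Cref{maintheorem}. You instead observe that \Cref{maintheorem} never asks for a regular model: the flat projective model obtained by spreading out the normal projective curve (exactly the first half of the proof of \Cref{lemmamadetorefer}, which uses neither completeness nor characteristic zero) already satisfies its hypotheses, because the model is excellent by excellence of the base and \cite[Theorem 8.2.39~$(c)$]{Liu}, and Lipman's resolution is invoked inside the proof of \Cref{maintheorem} anyway. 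This is a genuine, if small, streamlining: it removes the detour through \Cref{lemmamadetorefer} and with it any need for the auxiliary completeness-or-characteristic-zero step, so the argument works uniformly in positive characteristic for an arbitrary excellent henselian discrete valuation ring; what it gives up is only the convenience of citing an already-stated corollary. Your surjectivity remark is fine as stated (the morphism is projective, hence closed, and its image contains the generic point), which matches the justification implicit in the paper.
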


\begin{proof}
Let $R$ denote the valuation ring.
Since $R$ is excellent, either $R$ is complete or $\car(R)=0$.
By \Cref{lemmamadetorefer}, there exists a regular projective surface $\mathscr X$ over $R$ such that $\mathscr X\rightarrow \spec(R)$ is surjective and such that $F$ is the function field of $\mathscr X$. 
Hence the result follows from \Cref{maintheorem}. 
\end{proof}

\begin{exs}\label{arithmeticcaseex}
Let $m\in \nat $ be a positive integer. Consider the following cases:
\begin{enumerate}[$(1)$] 
\item $E = k(\!(X)\!)$ for an algebraically closed field $k$ of characteristic coprime to $m$.
\item $E$ is the maximal unramified extension of the field $\qq_p$ for a prime number $p$ not dividing $m$.
\end{enumerate}
In each of these cases, $E$ is a complete discretely valued field and the corresponding discrete valuation ring is excellent, by \Cref{excellentrem}.
Hence, it follows by \Cref{arithmeticcase} that $\K_2^{(m)} F$ is strongly linked for any algebraic function field $F/E$.
\end{exs}

\begin{cor}\label{henseliancase}
Let $F$ be the fraction field of a $2$-dimensional excellent hense\-lian local domain $R$ with separably closed residue field and such that $m\in\mg{R}$.
Then $\K_2^{(m)}F$ is strongly linked.  
\end{cor}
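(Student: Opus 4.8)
The plan is to obtain this as an immediate special case of \Cref{maintheorem}, taking for $\mathscr{X}$ the ``trivial'' model $\spec(R)$ itself, equipped with the identity morphism $\spec(R)\to\spec(R)$.

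First I would verify that $\spec(R)$ is a surface over $R$ in the sense of \Cref{S:ram-surfaces}: since $R$ is an excellent $2$-dimensional local domain, it is noetherian, so $\spec(R)$ is an integral separated noetherian scheme of dimension $2$, and it is an $R$-scheme via $\id$. Next, the identity morphism $\spec(R)\to\spec(R)$ is a closed immersion into $\mathbb{P}^0_R=\spec(R)$, hence projective, and it is trivially surjective. Finally, the function field of $\spec(R)$ is $\mathrm{Frac}(R)=F$ (and incidentally $\mathscr{X}^1=\mc{P}_R$, consistent with the notation of \Cref{S:ram-2dim}).

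It then remains only to check the standing hypotheses of \Cref{maintheorem} on $R$: it is excellent, henselian, local, of dimension two, with separably closed residue field, and since $m\in\mg{R}$ the image of $m$ in the residue field is a unit, so $\car(\kappa)$ does not divide $m$. All hypotheses are met, so \Cref{maintheorem} applies and yields that $\K_2^{(m)}F$ is strongly linked. I expect no genuine obstacle here; the only point deserving a word of care is the observation that the identity morphism on the affine noetherian scheme $\spec(R)$ does count as a projective (indeed finite) morphism, so that $\spec(R)$ legitimately falls within the scope of \Cref{maintheorem}.
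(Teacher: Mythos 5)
Your proof is correct and is essentially the paper's argument: both reduce the corollary to \Cref{maintheorem}. The only difference is that the paper first invokes \Cref{desingularisation} to replace $\spec(R)$ by a regular projective model before citing \Cref{maintheorem}, whereas you feed $\spec(R)$ itself into the theorem via the identity morphism; this shortcut is legitimate, since \Cref{maintheorem} does not assume the surface is regular (its proof performs the resolution internally) and, as you note, the identity on $\spec(R)$ is a projective (indeed finite) surjective morphism, with function field $F$. Your verification that $m\in\mg{R}$ forces $\car(\kappa)\nmid m$ is exactly the point needed to match the hypotheses of \Cref{maintheorem}.
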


\begin{proof}
As $R$ is $2$-dimensional and excellent, \Cref{desingularisation} asserts the existence of a regular projective model of $\spec(R)$, so the result follows from \Cref{maintheorem}. 
\end{proof}

\begin{ex}
Let $m$ be a positive integer. 
Consider $F=k(\!(X,Y)\!)$ where $k$ is an algebraically closed field of characteristic coprime to $m$.
Then $F$ is the fraction field of the $2$-dimensional complete regular local domain 
$k [\![X,Y]\!]$, 
which is excellent by \Cref{excellentrem}. 
Hence $\K_2^{(m)}F$ is strongly linked, by \Cref{henseliancase}.
\end{ex}

\section{Discretely valued quasi-finite fields}\label{S:disc-qf} 

A field is called \emph{quasi-finite} if it has a finite field extension of every degree and all its finite field extensions are cyclic.
Equivalently, a field is quasi-finite if it is perfect and its absolute Galois group is isomorphic to the procyclic group $\hat \zz$. 
Finite fields are quasi-finite.
Another natural example of a quasi-finite field is $\cc(\!(t)\!)$, the field of Laurent series in one variable over $\cc$. 

Let $m$ be a positive integer.
Recall that $\K^{(m)}F$ is strongly linked for any global field $F$, hence in particular when $F$ is an algebraic function field over a finite field.
We further have seen in \Cref{arithmeticcaseex} that the same holds for algebraic function fields over $\cc(\!(t)\!)$.
It is well-known that quasi-finite fields can have different properties relative to solvability of certain systems of polynomial equations. 
For example, while finite fields and $\cc(\!(t)\!)$ are so-called $\mc{C}_1$-fields, in \cite{Ax} J.~Ax constructed a quasi-finite field which is not a $\mc{C}_1$-field.
The observation that strong linkage holds for algebraic function fields over finite fields (by \cite{Lenstra}) and over $\cc(\!(t)\!)$ (by \Cref{arithmeticcaseex})  motivates the following question.

\begin{qu}\label{Q:qf-sl}
Let $E$ be a quasi-finite field.
Is $\K_2^{(m)}F$ strongly linked for every algebraic function field $F/E$ and every positive integer $m$?
\end{qu}

We do not know the answer to this question even for $m=2$.
However, we can give a positive answer in the case where $E$ carries a discrete valuation.
This relies on the following characterisation of this situation.

\begin{thm}\label{T:qf-Z}
Let $E$ be a field and let $v$ be a $\zz$-valuation on $E$.
Then $E$ is quasi-finite if and only if  $v$ is henselian, $\kappa_v$ is algebraically closed and $\car(\kappa_v) =0$.
\end{thm}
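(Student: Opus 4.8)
The plan is to prove both directions separately. For the implication that a discretely valued quasi-finite field has the stated local properties, I would argue as follows. Suppose $E$ is quasi-finite and $v$ is a $\zz$-valuation on $E$. First I would show $v$ is henselian: the henselization $E^h$ of $E$ at $v$ is an algebraic extension of $E$, and its absolute Galois group is a closed subgroup of $\gal(E)\cong\hat\zz$; but $v$ admits a henselization inside $E$ precisely when the decomposition group equals all of $\gal(E)$, and the point is that the henselization is obtained by adjoining roots that are forced to lie in $E$ because every finite extension of $E$ is cyclic and totally ramified extensions and unramified extensions of prescribed degree both exist and must coincide with the unique subextension of that degree. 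More carefully: since $\gal(E)=\hat\zz$, for each $n$ there is a unique extension $E_n/E$ of degree $n$, and it is Galois cyclic; the henselization $E^h$ is the compositum of those $E_n$ for which $v$ extends uniquely, so I need that $v$ extends uniquely to each $E_n$. If $v$ did not, then $v$ would have at least two extensions to some $E_n$, and Galois would act transitively on them, forcing the decomposition group to be a proper subgroup, hence $\gal(E)$ would have a proper subgroup of index dividing $n$ that is also normal — fine so far — but then the residue field extension or value group extension would have to account for $[E_n:E]$; I would instead use the cleaner route: the residue field $\kappa_v$ is perfect since $E$ is perfect and $\kappa_v$ is a quotient... actually one shows directly that $\kappa_v$ must be algebraically closed, and then Hensel's lemma for the residue field being separably (hence algebraically) closed plus value group $\zz$ gives uniqueness of extension.

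The cleanest organization, which I would adopt, is to first pin down the residue field. Let $\kappa=\kappa_v$. I would show $\kappa$ is algebraically closed and of characteristic zero. For the characteristic: if $\car(\kappa)=p>0$, then $E$ (being perfect) together with a $\zz$-valued $v$ with residue characteristic $p$ would admit an Artin--Schreier-type or Kummer-type analysis producing non-cyclic or infinitely-ramified behaviour contradicting $\gal(E)=\hat\zz$; more directly, a complete discretely valued field with residue characteristic $p$ has absolute Galois group far larger than $\hat\zz$ (its maximal pro-$p$ quotient alone is huge), and the henselization has the same Galois group as the completion, which is a quotient... here I must be careful, so I would instead argue: unramified extensions of $E$ correspond bijectively to separable extensions of $\kappa$, and these are all cyclic and exist in every degree, so $\kappa$ is quasi-finite; totally ramified cyclic extensions of degree $n$ prime to $\car\kappa$ exist by Kummer theory adjoining $n$-th roots of a uniformizer once $\mu_n\subseteq E$, which happens once $\mu_n\subseteq\kappa$ since $\kappa$ quasi-finite means $\mu_n\subseteq\kappa$ for all $n$ prime to $\car\kappa$; combining, if $\kappa$ were merely quasi-finite and not algebraically closed, $E$ would have for a suitable $n$ both an unramified degree-$n$ extension and a distinct totally ramified degree-$n$ extension, giving two different subextensions of degree $n$, contradicting uniqueness in $\hat\zz$. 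Hence $\kappa$ has no nontrivial separable extensions, i.e.\ $\kappa$ is separably closed; perfectness of $\kappa$ (inherited from $E$) then gives $\kappa$ algebraically closed. Finally $\car(\kappa)=0$: if $\car\kappa=p$, then since $\kappa$ is algebraically closed it contains no obstruction, but now the wild ramification of $E$ produces a pro-$p$ part of $\gal(E)$ of infinite rank (e.g.\ via $\wp(x)=x^p-x$ applied to elements of negative valuation), contradicting $\gal(E)\cong\hat\zz$ whose pro-$p$ part is $\zz_p$ of rank one. Once $\kappa$ is separably closed, Hensel's lemma in the form quoted after \Cref{Milnorresiduehom} shows every simple root lifts, and standard valuation theory (Ostrowski / the fundamental inequality, with $f=1$ forced and $e$ controlled) gives that $v$ extends uniquely to every finite extension, i.e.\ $v$ is henselian.

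For the converse, assume $v$ is henselian, $\kappa_v$ algebraically closed, $\car(\kappa_v)=0$. Then $E$ is perfect: in characteristic zero every field is perfect, or if $\car(E)=p>0$ then $\car(\kappa_v)=p\neq 0$, excluded — so $\car(E)=0$ and $E$ is perfect. It remains to compute $\gal(E)$. Since $v$ is henselian with residue field $\kappa_v$ algebraically closed and residue characteristic zero, the field $E$ is ``quasi-complete'' enough that its absolute Galois group coincides with that of its completion $\hat E_v$; and $\hat E_v\cong\kappa_v(\!(t)\!)$ by Cohen structure theory (equal characteristic zero, complete discrete valuation, algebraically closed residue field). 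The absolute Galois group of $k(\!(t)\!)$ for $k$ algebraically closed of characteristic zero is $\hat\zz$, generated by the tame quotient $t\mapsto\mu_n$-Kummer extensions, with no wild part since $\car k=0$. Hence $\gal(E)\cong\hat\zz$, so $E$ has a unique (cyclic) extension of each degree and is perfect — i.e.\ $E$ is quasi-finite.

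\textbf{Main obstacle.} The delicate point is the rigorous bookkeeping that $v$ being henselian (rather than complete) suffices throughout — both in deducing from quasi-finiteness that $\kappa_v$ is algebraically closed of residue characteristic zero, and in the converse direction identifying $\gal(E)$ with $\gal(\hat E_v)$. The key facts I would invoke are that the henselization induces an isomorphism on absolute Galois groups (it is an immediate, i.e.\ ind-étale-type, extension with the same completion), and that unramified finite extensions of a henselian valued field are in degree-preserving bijection with finite separable extensions of the residue field; ruling out wild ramification in the positive residue characteristic case via the infinite rank of the pro-$p$ Galois group of a local field is the other essential input. Everything else is routine valuation theory combined with the classification $\gal(k(\!(t)\!))\cong\hat\zz$ for $k$ algebraically closed of characteristic zero.
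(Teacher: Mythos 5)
You have a genuine gap in the forward direction, at the point where henselianity is established. You first pin down the residue field and then claim that ``once $\kappa_v$ is separably closed, Hensel's lemma shows every simple root lifts, and the fundamental inequality with $f=1$ forced gives that $v$ extends uniquely to every finite extension.'' This is circular -- Hensel's lemma \emph{is} the property being proved -- and the implication is false: $E=\cc(t)$ with the $t$-adic valuation has algebraically closed residue field of characteristic zero and value group $\zz$, yet is not henselian ($X^2-(1+t)$ has a simple residual root but no root in $E$), and indeed $\cc(t)$ is not quasi-finite. Nor does $f=1$ for all extensions force uniqueness: the fundamental inequality $\sum_i e_if_i\le n$ allows several extensions with small $e_i$ (for $\cc(t)(\sqrt{1+t})$ there are two extensions of the $t$-adic valuation, each with $e=f=1$). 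Henselianity must instead be extracted from quasi-finiteness itself, as the paper does: $X^d-\pi$ is irreducible because any root has value $\frac{1}{d}$, so the unique degree-$d$ extension is $E(\sqrt[d]{\pi})$ and is totally ramified; then $e=d$ leaves room for only one extension of $v$ in every finite extension, which is henselianity. The same circularity affects your uses of ``unramified extensions of $E$ correspond to separable extensions of $\kappa_v$'' and ``$\mu_n\subseteq\kappa_v$ implies $\mu_n\subseteq E$'' before henselianity is available; the first can be repaired without henselianity (lift the minimal polynomial, Gauss's lemma keeps it irreducible and forces $f\ge n$, so it is a second degree-$n$ extension besides $E(\sqrt[n]{\pi})$), but as written the order of deductions does not close.

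The step $\car(\kappa_v)=0$ is also not proved as stated. Since $E$ is perfect and $v(\mg{E})=\zz$ is not $p$-divisible, one has $\car(E)=0$ (a point you never record), so in the remaining case the pair of characteristics is $(0,p)$ and there are no Artin--Schreier extensions: your proposed mechanism ``$\wp(x)=x^p-x$ applied to elements of negative valuation'' is unavailable, and the claim that wild ramification yields a pro-$p$ part of $\gal(E)$ of infinite rank is left unsubstantiated in mixed characteristic. The paper disposes of this case by an elementary Kummer computation: a primitive $p$th root of unity is forced into $E$, the group $\mg{E}/E^{\times p}$ must be generated by the class of $\pi$ (otherwise a non-cyclic $(\zz/p\zz)^2$-extension exists), hence $1+\pi=\xi^p$; writing $\xi=1+\vartheta$ with $v(\vartheta)>0$ and using $v(p)\ge 1$ gives $v(\pi)\ge 2$, a contradiction. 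Your converse direction is essentially correct but runs through heavier machinery (completion, Cohen's structure theorem, $\gal(k(\!(t)\!))\cong\hat{\zz}$ for $k$ algebraically closed of characteristic zero, and the identification of $\gal(E)$ with $\gal(\hat{E}_v)$ for henselian $E$), whereas the paper argues directly with Hensel's lemma and the fundamental equality; that half would pass with precise references, but the forward direction needs to be reorganized along the lines indicated above.
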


Most of the implications contained in this statement follow immediately from classical results in valuation theory.
We include a complete elementary proof.
\medskip

\begin{proof}
Let $\pi\in \mg{E}$ be such that $v(\pi)=1$.

Assume that $\kappa_v$ is algebraically closed with $\car(\kappa_v)=0$ and that $v$ is henselian.
Then $\car(E)=0$, hence $E$ is perfect.
It follows by Hensel's Lemma that $E$ contains all roots of unity.
Since $v(\pi)=1$, for any positive integer $n$, the extension $E(\sqrt[n]{\pi})/E$ has  degree $n$. 
Let $L/E$ be a finite field extension and $d = [L:E]$. 
Since $v$ is henselian, $v$ has a unique extension $w$ to $L$.
Since $\kappa_v$ is algebraically closed, the residue field of $w$ is equal to $\kappa_v$, and hence by \cite[Theorem 3.3.5]{EP05} we have that
$ [w(\mg L): \zz]  = [L:E ] =d$.
We fix $\delta \in L$ such that $w(\delta) =\frac{1}{d}$. 
Then $\frac{\delta^d}{\pi}\in\mg{\mc{O}}_w$.
Since $\kappa_v$ is algebraically closed of characteristic zero, the assumption that $w$ is henselian yields that the polynomial $X^d-\frac{\delta^d}{\pi}$ has a root in $\mc{O}_w[X]$. Hence there exists 
$u\in\mc{O}_w$ with $\delta^d=\pi u^d$. We conclude that $L=E(\sqrt[d]{\pi})$.
Hence for any positive integer $d$ the unique field extension of $E$ of degree $d$ is given by $E(\sqrt[d]{\pi})$, which proves that $E$ is quasi-finite.

Assume now that $E$ is quasi-finite.
Since $E$ is perfect and carries a $\zz$-valuation, we have that $\car(E) =0$. 
For any positive integer $d$, since $X^d- \pi$ is irreducible in  $E[X]$, we obtain a field extension of degree $d$ of $E$ in which $v$ is totally ramified.
As $E$ has a unique extension of any degree, we obtain that $v$ is totally ramified in every finite field extension of $E$. 
Therefore $\kappa_v$ is algebraically closed and $v$ extends uniquely to any finite field extension of $E$. 
This shows that $v$ is henselian.

Suppose now that $\car(\kappa_v)=p$ for a prime number $p$.
As $\car(E)=0$, the polynomial $X^p-\pi$ is separable over $E$. 
Let $E'$ denote the root field of $X^p-\pi$ over $E$.
Since $E$ is quasi-finite and $v(\pi)=1$, it follows that $E'/E$ is a cyclic extension of degree $p$.
Hence $X^p-\pi$ splits over $E'$. 
By taking the quotient of two distinct roots of $X^p-\pi$ we obtain a primitive $p$th root of unity $\omega\in E'$.
As $[E(\omega):E]<p=[E':E]$, we find that $\omega\in E$.
Since every extension of degree $p^2$ of $E$ is cyclic, the group $\mg{E}/E^{\times p}$ is generated by the class of $\pi$.
As $v(\pi)=1$ and $v(1+\pi)=0$, it follows that $1+\pi\in E^{\times p}$. 
Hence 
$1+\pi=\xi^p$ for some $\xi\in E$.
Let $\vartheta=\xi-1$.
Since $\car(\kappa_v)=p$ it follows that $v(\vartheta)>0$. 
On the other hand, $\pi=(1+\vartheta)^p-1=\sum_{i=1}^p\binom{p}{i}\vartheta^i$.
Since $v(p)\geq 1$ and $v(\vartheta)\geq 1$ while $v(\pi)=1$,
we have a contradiction. 
This proves that $\car(\kappa_v)=0$.
\end{proof}

\begin{cor}\label{qqfsl}
Let $E$ be a quasi-finite field and $m$ a positive integer. 
If $E$ carries a $\zz$-valuation, then $\K_2^{(m)}F$ is strongly linked for any algebraic function field $F/E$.
\end{cor}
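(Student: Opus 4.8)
The plan is to recognise that the hypotheses of \Cref{arithmeticcase} are met, so that the statement follows from it with essentially no extra work; the only thing to check is that the valuation ring of a $\zz$-valuation on a quasi-finite field is an \emph{excellent} henselian discrete valuation ring with separably closed residue field of characteristic not dividing $m$.

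First I would fix a $\zz$-valuation $v$ on $E$ and set $R=\mc O_v$, which is a discrete valuation ring with fraction field $E$. Since $E$ is quasi-finite and carries the $\zz$-valuation $v$, \Cref{T:qf-Z} applies and shows that $v$ is henselian, that $\kappa_v$ is algebraically closed, and that $\car(\kappa_v)=0$. In particular $R$ is a henselian DVR whose residue field is algebraically closed, hence separably closed, and since $\car(\kappa_v)=0$ forces $\car(E)=0$, the ring $R$ has characteristic zero. Moreover $\car(\kappa_v)=0$ trivially does not divide $m$.

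Next I would verify that $R$ is excellent. Being a discrete valuation ring, $R$ is a Dedekind domain, and it has characteristic zero; hence $R$ is excellent by \Cref{excellentrem}. Thus $R$ is an excellent henselian discrete valuation ring with separably closed residue field of characteristic not dividing $m$, so $E$ is the fraction field of such a ring.

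Finally, for an arbitrary algebraic function field $F/E$, \Cref{arithmeticcase} applies verbatim and yields that $\K_2^{(m)}F$ is strongly linked. The argument has essentially no obstacle; the one point requiring a moment's care is the excellence of $R$, which is precisely why one uses the characteristic-zero conclusion of \Cref{T:qf-Z} together with the fact that a Dedekind domain of characteristic zero is excellent.
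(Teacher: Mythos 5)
Your proposal is correct and follows the paper's own proof exactly: apply \Cref{T:qf-Z} to get a henselian discrete valuation with algebraically closed residue field of characteristic zero, deduce excellence of $\mc{O}_v$ from \Cref{excellentrem} (Dedekind domain of characteristic zero), and conclude via \Cref{arithmeticcase}. No gaps.
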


\begin{proof}
Let $v$ be a $\zz$-valuation on $E$.
By \Cref{T:qf-Z}, $\mc O_v$ is a henselian discrete valuation ring and $\kappa_v$ is algebraically closed with $\car(\kappa_v) =0$.
In particular $\car(\mc O_v) =0$. Hence $\mc O_v$ is excellent,
 by \Cref{excellentrem}.
Therefore $\K_2^{(m)}F$ is strongly linked, by \Cref{arithmeticcase}.
\end{proof}

We point out a type of examples of quasi-finite fields for which we do not know the answer to \Cref{Q:qf-sl}.
\begin{ex}\label{E:qf}
Let $k$ be a quasi-finite field of characteristic zero and let $E$ be the field of Puiseux series in $t$ over $k$, i.e.~$E=\bigcup_{r=1}^\infty k(\!(t^{{1}/r})\!)$.
This field $E$ carries a henselian valuation $v$ with value group $\qq$ and residue field $k$.
Since the value group of $v$ is divisible, $v$ is unramified in every finite extension of $E$.
On the other hand, since $v$ is henselian, for every positive integer $d$, any finite extension of $k$ lifts uniquely to an unramified extension of $E$ of the same degree.
Since the field $k$ has a unique field extension of degree $d$ for any positive integer $d$, we conclude that the same holds for $E$.
Hence $E$ is quasi-finite.
As $E$ does not carry any $\zz$-valuation, \Cref{T:qf-Z} does not help to answer \Cref{Q:qf-sl} in this case.
\end{ex}

\subsubsection*{Acknowledgments}
The authors express their gratitude to Jean-Louis Colliot-Th\'el\`ene, Arno Fehm, David Grimm, Gonzalo Manzano Flores, R.~Parimala, Suresh Venapally and Jan Van Geel  for various answers, discussions, suggestions, simplifications and other valuable input related to this article.
They further gratefully acknowledge the referee's comments and suggestions, which helped to streamline the presentation.

\bibliographystyle{abbrv}

\end{document}